\renewcommand{\eprint}[1]{\href{https://arxiv.org/abs/#1}{#1}}
\DeclareMathOperator{\GL}{\mathrm{GL}}
\DeclareMathOperator{\Hom}{Hom}
\newcommand{\hcL}{\hat{\cL}}
\newtheorem{Thm}{Theorem}[section]
\newtheorem{Lem}[Thm]{Lemma}
\newtheorem{Prop}[Thm]{Proposition}
\newtheorem{Cor}[Thm]{Corollary}
\theoremstyle{definition}
\newtheorem{Def}[Thm]{Definition}
\theoremstyle{remark}
\newtheorem{Rem}[Thm]{Remark}
\newtheoremstyle{named}{}{}{\itshape}{}{\bfseries}{.}{.5em}{#1 #3}
\theoremstyle{named}
\def\C{\mathbb{C}}
\def\g{\mathfrak{g}}
\def\Frenkel:2013uda{\mathfrak{h}}
\def\cD{\mathcal{D}}
\def\cL{\mathcal{L}}
\def\cO{\mathcal{O}}
\def\cV{\mathcal{V}}
\def\cW{\mathcal{W}}
\def\bo{\textbf{o}}
\def\=>{\Longrightarrow}
\def\to{\longrightarrow}
\def\o+{\oplus}
\def\bo+{\bigoplus}
\def\<{\langle}
\def\>{\rangle}
\def\({\left(}
\def\){\right)}
\def\^{\wedge}
\def\+{\dagger}
\def\dd[#1,#2]{\frac{d#1}{d#2}}
\def\del[#1,#2]{\frac{\partial #1}{\partial #2}}
\def\over[#1]{\overline{#1}}
\def\vec[#1]{\overrightarrow{#1}}
\def\mr@ignsp#1 {\ifx\:#1\@empty\else #1\expandafter\mr@ignsp\fi}%
\newcommand{\multiref}[1]{\begingroup%\let\protect\string%
\xdef\mr@no@sparg{\expandafter\mr@ignsp#1 \: }%
\def\mr@comma{}%
\@for\mr@refs:=\mr@no@sparg\do{\mr@comma\def\mr@comma{,}\ref{\mr@refs}}%
\endgroup}
\newcommand{\hypref}[2]{\ifx\href\asklFrenkel:2013udaas #2\else\href{#1}{#2}\fi}
\tikzset{->-/.style={decoration={
  markings,
  mark=at position .5 with {\arrow{latex}}},postaction={decorate}}}
\tikzset{
    %Define standard arrow tip
    >=latex
    }
\newcommand{\nc}{\newcommand}
\nc{\on}{\operatorname}
\nc{\la}{\lambda}
\nc{\wh}{\widehat}
\nc{\ghat}{\wh\g}
\nc{\mb}{\mathbf}
\begin{document}
\title{The Zoo of Opers and Dualities}

\author[P. Koroteev]{Peter Koroteev}
\address{
Department of Mathematics,
University of California,
Berkeley, CA 94720, USA,
pkoroteev@berkeley.edu;
and 
NHETC, Rutgers University, 
Piscataway, NJ, 08854, USA
}

\author[A.M. Zeitlin]{Anton M. Zeitlin}
\address{
          Department of Mathematics, 
          Louisiana State University, 
          Baton Rouge, LA 70803, USA}

\date{\today}

\numberwithin{equation}{section}

\begin{abstract}
We investigate various spaces of $SL(r+1)$-opers and their deformations. 
For each type of such opers, we study the quantum/classical duality, which relates quantum integrable spin chains with classical solvable many body systems. In this 
context, quantum/classical dualities serve as an interplay between two different coordinate systems on the space of opers. 
We also establish correspondences between the underlying oper spaces, which recently had multiple incarnations in symplectic duality and bispectral duality.  
\end{abstract}

\maketitle 

\setcounter{tocdepth}{1}
\tableofcontents

\section{Introduction}\label{Sec:Intro}
\subsection{Integrable systems and enumerative geometry}
The study of integrable systems has led to a plethora of important discoveries in modern mathematics. 
In the 1960s and 1970s, the explosion of interest in classical integrable models in the context of soliton theory significantly impacted differential and algebraic geometry, as well as representation theory. Eventually, that became an area of its own, and the theory of integrable systems has become an independent field of research in modern mathematical physics. An enormous number of such systems were discovered in the 1960s and 1970s, some of which were infinite dimensional, like the original Korteweg-de Vries (KdV) system describing soliton waves in a channel, whereas others were finite-dimensional, originating in solid-state physics, corresponding to interacting particles on a lattice, like the celebrated Toda system.

In the 1980s, the study of quantum integrable models led to the discovery of quantum groups. Then, in the last decade of the 20th century, another area of modern algebraic geometry, known as enumerative geometry, was enriched by a multitude of new notions and tools with deep connections to modern theoretical physics, such as 
Gromov-Witten invariants, quantum cohomology, and mirror symmetry. The pioneering works of Dubrovin, Kontsevich, and Witten led to the discovery of hidden connections between the theory of classical integrable systems, including original equations of soliton theory, and enumerative invariants.

In order to understand, on a basic level, what classical and quantum integrable models are, consider a symplectic manifold of dimension $2n$ such that there are $n$ mutually commuting functions (Hamiltonians) 
under the corresponding Poisson structure. The Liouville-Arnold theorem states that the level surface defined by the corresponding Hamiltonians is a Lagrangian subvariety such that its connected components are 
diffeomorphic to the product of abelian tori and Euclidean space. The flows of the vector fields corresponding to the Hamiltonians create the \textit{angle} coordinates, which can be paired with \textit{action} coordinates, thereby forming a Darboux coordinate system on the corresponding connected component. `Solving' an integrable model entails finding the corresponding action-angle variables and the corresponding transformation map. This can be a complicated problem, particularly when an integrable system is infinite-dimensional, like the KdV system. In that particular example, the action-angle variables are related to the spectral data of the Sturm-Liouville operator, and their construction is the goal of the Inverse Scattering Method (ISM).

In quantum integrable models, the Poisson bracket is replaced by the commutator, and the Hamiltonians are replaced by mutually commuting set of operators in a Hilbert space. Instead of finding the action-angle variables, the task of interest is the simultaneous diagonalization of these operators. In the case of quantum spin chains, the Hilbert space is the tensor product of finite-dimensional modules of the Yangian $Y_{q}(\mathfrak{g})$ or affine quantum group $U_{q}(\widehat{\mathfrak{g}})$ or elliptic quantum group associated to $\mathfrak{g}$, where 
$\mathfrak{g}$ is a simple Lie algebra. The corresponding integrable models are known as XXX, XXZ, and XYZ spin chains respectively, and the XXX is the limit of the XXZ, which is in turn the limit of the XYZ. There is also an important `semiclassical limit', known as the Gaudin model, where the Hilbert space is the tensor product of finite-dimensional representations of $\mathfrak{g}$.

The transfer matrices, a commuting family of operators labeled by finite-dimensional representations, can be described through the associated braiding operators, called R-matrices. These R-matrices are an intrinsic part of the Yangian/quantum group structure and are associated with a regular semisimple element $Z=\prod_iz_i^{\check{\alpha}_i}$.
However, the case of the Gaudin model is different: 
the quantum Hamiltonians can be described explicitly in terms of generators of $\mathfrak{g}$, and the twist parameter is an element of the Cartan subalgebra of $\mathfrak{g}$. 
The original method of diagonalization of the transfer matrices was given by the algebraic Bethe Ansatz, introduced in the 1970s-1980s, which became a part of the quantum inverse scattering method \cite{Bogolyubov:1993qism,Reshetikhin:2010si} and historically led to the discovery of quantum groups. The result is a system of algebraic (Bethe) equations such that the eigenvalues of transfer matrices generate functions of the roots of these Bethe equations. However, although this method was quite successful for particular examples, it lacked universality and was too representation specific. At the time, these qualities largely led to a lack of conceptual-level mathematical understanding of what Bethe equations are.

The 1990s brought another approach to the algebraic Bethe Ansatz method, considering classical and quantum Knizhnik-Zamolodchikov (qKZ) equations. These are the differential/difference equations that govern the matrix elements of intertwining operators (correlation functions) for the affine algebras and their deformations discussed above. For example, the qKZ equations for the XXZ chain appear as $\Psi(\{\mathfrak{q}^{\sigma}a_i\})=S_{\sigma}\Psi(\{a_i\})$, where $\Psi$ takes values in the Hilbert space, and $\{a_i\}$ is the collective label for evaluation parameters for the modules contributing to the Hilbert space. On the left-hand side of the equation, these parameters are shifted by the integer degrees of $q$ controlled
by $\sigma$. The parameter $\mathfrak{q}$ is related to the central charge $k$ and 
deformation parameter $q$ as $\mathfrak{q}=q^{-(k+h^{\vee})}$, where $h^{\vee}$ is the dual Coxeter element of $\mathfrak{g}$. The operators  $S_{\sigma}$ are constructed using R-matrices and the twist element $Z$. 
Their limit at the critical level $S_{\sigma}|_{k\rightarrow -h^{\vee}}$ gives the appropriate 
transfer matrices. Thus, the critical level asymptotics of the solutions of the qKZ equations 
lead to the desired eigenvalue problem of the quantum Hamiltonians. 
The classical Knizhnik-Zamolodchikov equation and the eigenvalue problem for the Gaudin model can be obtained in the $q\rightarrow 0$ limit in the Yangian case. 
There is also a commuting system of equations on the twist parameter $Z$, i.e., $\Psi(\{\mathfrak{q}^{\rho}z_i\})=M_{\rho}\Psi(\{z_i\})$, known as the dynamical qKZ equations, which will be necessary for the following steps. The seminal work of Cherednik and Matsuo established the relationship between solutions of the qKZ equations and the quantization of multiparticle systems, showing that the eigenfunctions of quantum Hamiltonians of an appropriate multiparticle system can be constructed from the solutions of the qKZ equations. This can be viewed as the quantum version of the quantum/classical duality, which we discuss in detail below.

Papers by Givental and his collaborators in the 1990s and early 2000s explicitly established the relations between multiparticle systems and fundamental enumerative invariants within quantum cohomology/quantum K-theoretic computations. 
Two key results should be mentioned. First, Givental and Kim \cite{givental1995} showed that the quantum equivariant cohomology ring of the complete flag variety can be interpreted as the algebra of functions on the intersection of two Lagrangian cycles of the Toda lattice. 
At the same time, in the context of quantum K-theory of complete flag varieties \cite{2001math8105G}, 
J-functions, which count Drinfeld's version of quasimaps as weighted equivariant 
Euler characteristics on a quasimap moduli space, satisfy certain difference equations, which identify these J-functions with eigenfunctions of quantum Hamiltonians of the difference Toda (q-Toda) system. This latter result emphasized the importance of the difference equations from quantum integrable models in enumerative problems. 
Second, a groundbreaking result emerged from the study of 2D/3d gauge theories by 
Nekrasov and Shatashvili \cite{Nekrasov:2009ui,Nekrasov:2009uh}. 
Their results showed that Bethe equations for XXX and XXZ models can be interpreted as the relations in the quantum cohomology rings of certain varieties. 
These varieties describe the Higgs vacua of these theories and are well known to mathematicians as Nakajima quiver varieties \cite{Nakajima:1999hilb}, an example of symplectic resolutions. 
As it usually happens in physics, such deformed rings emerge from counting 
(via indices of some elliptic operators) of particular solutions of differential equations. In those cases, these are vortex solutions, which can be mathematically interpreted as quasimaps to quiver varieties -- these involve a collection of vector bundles on the projective line, induced by the quiver data, as well as their sections with some extra conditions.

At the same time, quiver varieties became well known in the context of geometric representation theory through the work of Nakajima, Vasserot, and many others \cite{Nakajima:2001qg,Vasserot:wo}, who constructed the geometric action of Yangians and quantum groups on localized equivariant cohomology/K-theory of quiver varieties. These spaces can be identified with tensor products of finite-dimensional modules of the corresponding algebraic object and thus, in our terms, with the Hilbert spaces of XXX and XXZ modules.

Combining the insights from the work of Nekrasov and Shatashvili, along with Givental's results on the importance of difference equations, and geometric representation theory's interpretations of Yangians and quantum groups, Okounkov and his collaborators \cites{Braverman:2010ei,2012arXiv1211.1287M,Okounkov:2015aa,Okounkov:2016sya} found that the analog of Givental's J-functions for quasimaps to quiver varieties satisfy the qKZ equations. The equivariant parameters are identified with evaluation parameters of finite-dimensional modules, and eigenvalues $\{z_i\}$ of the twist element are identified with K\"ahler parameters, which produce the weight parameters in curve counting. 

The proper formulation of the Nekrasov-Shatashvili conjecture 
implies that the eigenvalues of the quantum tautological classes are governed by the same asymptotics governed by qKZ equation. That leads to the fact that the eigenvalues of the multiplication by tautological classes are the symmetric functions of Bethe roots. This was proven for $A_{r+1}$ quiver varieties in \cite{Koroteev:2017aa}.

In general, the Bethe Ansatz equations describing the relations in the quantum K-theory ring for quiver varieties can be summarized using the $QQ$-system:
\begin{eqnarray}\label{qq-system}
&&{{\widetilde\xi_i}}Q^i_{-}({u})Q^i_{+}(q  {u})-{\xi_i}Q^i_{-}(q  {u})Q^i_{+}({u}) \nonumber \\
&&=\Lambda_i({u})\prod_{j\neq i}\Bigg[\prod^{-a_{ij}}_{k=1} Q^j_{+}(q^{b_{ij}^k}{u})\Bigg],\quad 
i=1,\dots,r.
\end{eqnarray}

This is a system of equations for $Q^i_{+}$, where the index $i$ runs through the quiver vertices, 
${\xi_i}$, ${{\widetilde\xi_i}}$ are certain monomials in $\{z_i^{\pm 1}\}$, while $b_{ij}=1$ if $i>j$ 
and  $b_{ij}=0$, if $j>i$. Here, polynomials $\Lambda_i({u})$ are monic polynomials whose roots coincide with the equivariant 
parameters corresponding to framing at that vertex, and polynomials $Q^i_{-}(u)$ are auxiliary. Assuming certain mild non-degeneracy conditions are satisfied, the roots of $Q^i_+(u)$ coincide with Bethe roots, and the polynomial $Q^i_+(u)$ coincides with the eigenvalues of the operator $\widehat{Q}^i_+(u)$ of quantum multiplication by $\sum^{r+1}_{k=1}u^k\widehat{\Lambda^k\mathcal{V}_i}$, where $\mathcal{V}_i$ are quantum tautological bundles. The operators $\widehat{Q}^i_+(u)$ have interesting interpretations in terms of transfer matrices. In the case of quantum groups, they are known as Baxter operators, corresponding to the infinite-dimensional representations of the Borel subalgebra of $U_{q}(\widehat{\mathfrak{g}})$, which uses the fact that the R-matrix belongs to the completed tensor product of the opposite Borel subalgebras. These representations are known as {\it prefundamental} and in the $U_{q}(\widehat{\mathfrak{sl}(2)})$ case were introduced by Bazhanov, Lukyanov and Zamolodchikov in \cite{Bazhanov:1998dq} and studied in general by Frenkel and Hernandez \cite{Frenkel:2013uda,Frenkel:2016} following earlier results of Jimbo and Hernandez \cite{HJ}. Similar construction and the analog of the $QQ$-system should also exist for Yangians, with some progress being made in \cite{BFLMS}. 

Below we discuss another geometric interpretation of the $QQ$-system.

\subsection{Twisted $(^LG,q)$, $(^LG,\epsilon)$-opers and dualities}
A well-known example of the geometric Langlands correspondence, studied in \cite{Feigin:1994in,Frenkel:2003qx,Frenkel:2004qy,FFTL:2010,Rybnikov:2010,MR2452191,MR2525775} can be formulated as a one-to-one correspondence between the spectrum of Gaudin models associated with Lie algebra $\mathfrak{g}$ and Miura $^LG$-oper connections on a projective line with trivial monodromy, regular singularities, and double pole irregular singularity at infinity. By $^LG$ we refer to the simply connected group associated with $^L\mathfrak{g}$. 
This allows the following reformulation of the duality \cite{Brinson:2021ww} -- there is a one-to-one correspondence between the solutions of the differential form of the $QQ$-system (\ref{qq-system}) (obtained in the limit $q\rightarrow 1$) and the relevant oper connections. At the same time, assuming certain mild non-degeneracy
conditions, the solutions to this differential $QQ$-system are in one-to-one correspondence with Bethe equations of Gaudin models. 

This correspondence has been recently successfully deformed to the case of XXX and XXZ models \cite{KSZ,Frenkel:2020}. Namely, 
there is a deformed analog of a connection, which is a meromorphic section $A\in \text{Hom}_{\mathcal{O}(\mathbb{P}^1)}(\mathcal{F}_{^LG}, \mathcal{F}_{^LG}^{q})$, where $\mathcal{F}_{^LG}$ is a principal $^LG$-bundle, $\mathcal{F}^{q}_{^LG}$ is a pull-back bundle with respect to either additive or multiplicative action with respect to parameter $q$. The Miura oper condition is related to two reductions of $\mathcal{F}_{^LG}$ to $\mathcal{F}_{^LB_{\pm}}$ to Borel subgroups $^LB_{\pm}$. The $(^LG,q)$-oper condition means that $A$ belongs to the Coxeter cell with respect to $B_-$, while the Miura condition implies that $A$ preserves $\mathcal{F}_{^LB_{+}}$. Locally, that means 
$$
A(v)=\prod^r_{i=1}g_i^{\check{\alpha}_i}(v)e^{\frac{\Lambda_i(v)}{g_i(v)}e_i}\,,
$$
where $g_i(z), \Lambda_i(z)$ are rational functions. The condition that $A$ has regular singularities means that $\{\Lambda_i(v)\}_{i=1,\dots, r}$ are polynomials. The zero monodromy condition and double pole singularity at infinity are modified to the condition that the $(^LG,q)$-oper connection is gauge equivalent to $Z\in ^L\!\!H$, i.e. $A(v)=U(q v)ZU(v)^{-1}$. 

For a q-oper connection $A(v)$ we can associate a difference equation $f(qv)=A(v)f(v)$ which is a q-difference analogue of ordinary differential equations which arise in the study of opers.

Assuming certain mild non-degeneracy conditions on the corresponding objects, we proved \cite{Frenkel:2020,KoroteevZeitlinCrelle} that there is a one-to-one correspondence between nondegenerate $Z$-twisted Miura $(^L G,q)$-opers with regular singularities and the nondegenerate polynomial solutions of the $QQ$-systems (\ref{qq-system}) for a specific choice of parameters $\{b_{ij}^k\}$, associated with Lie algebra $\mathfrak{g}$.

In the case of simply-laced $\mathfrak{g}$, the $QQ$-systems from \cite{Frenkel:2020} are equivalent to standard Bethe Ansatz equations. However, the non-simply laced case is more involved (see the discussion in \cite{Frenkel:2020} and in \cite{Frenkel:2021vv}). In particular, given the relationship between the $QQ$-systems and the Bethe Ansatz equations, one obtains the correspondence between the space of functions on $Z$-twisted  $(^LG,q)$-opers with regular singularities as quantum cohomology/quantum K-theory ring on the corresponding quivers of ADE type.

A key idea of \cite{KoroteevZeitlinCrelle}, which allows proving the theorem above in full, following the partial result in \cite{Frenkel:2020} (which uses the intermediate object called the Miura-Pl\"ucker oper and heavy non-degeneracy conditions) is to introduce the notion of $(^LG,q)$-Wronskian: the generalization of the $q$-difference version of the Wronskian matrix, which was previously used to define $(SL(r+1),q)$-opers via associated bundles \cite{KSZ}.
$(^LG,q)$-Wronskians are the meromorphic sections of an $^LG$-bundle on the projective line, which satisfy a certain $q$-difference equation. We establish a one-to-one correspondence between $(^LG,q)$-opers and $(^LG,q)$-Wronskians in \cite{KoroteevZeitlinCrelle}. The elements of the $QQ$-system are identified with the certain  {\it generalized minors} of Berenstein, Fomin, and Zelevinsky \cite{Berenstein_1997,BERENSTEIN199649,FZ} of the $(^LG,q)$-generalized Wronskian, while the equations of the $QQ$-system emerge as the relations between the generalized minors discovered in \cite{FZ} which appear in the study of double Bruhat cells in the combinatorial context of cluster algebras.

In the case of $^LG=SL(r+1)$-opers, the above Wronskians can be explicitly written as determinants of matrices formed by components of a section of a line bundle and its q-shifts. The roots of polynomials inside such $(SL(r+1), q)$-Wronskian matrix provide an alternative set of coordinates for $(SL(r+1),q)$-opers. 
They have a special meaning. These coordinates provide one side of the correspondence between XXX/XXZ models and rational/trigonometric Ruijsennars-Schenider (rRS/tRS) many-body systems and offer an example of quantum/classical duality \cites{Mukhin:2009aa,Gaiotto:2013bwa,Mukhin:2012aa,Mironov:2012ba,Zabrodin:2017td,Zabrodin:2017vt,Zabrodin:}. Namely, suppose one identifies roots of polynomials from $(SL(r+1),q)$-Wronskian with the momenta and $z_i$ parameters with the values of coordinates for the tRS/rRS system. In that case, one obtains \cite{Koroteev:2017aa,Koroteev:2023aa} that 
quantum K-theory ring for quiver varieties of type $A$ is isomorphic to the ring of functions on the intersection of two Lagrangian cycles in the phase of the tRS integrable system.

The importance of this construction in a larger geometric context is as follows. The study of the so-called 3d mirror symmetry or symplectic duality is one of the central topics in modern mathematics, despite still having a vague mathematical definition. This pursuit aims to relate pairs of symplectic varieties, which appear as the so-called Higgs and Coulomb branches of supersymmetric 3d quiver gauge theories with eight supercharges so that specific collections of structures on these varieties coincide. For example, one such structure is the spectra of quantum K-theory rings, which describe the chiral rings (holomorphic operators) for the corresponding gauge theories. On the level of the $q$-difference equations discussed above, on the dual variety, the equivariant and K\"ahler 
parameters do interchange so that qKZ and the corresponding dynamical equations switch roles. There is no systematic theory of the construction of such pairs, but some examples are known. The above-mentioned quantum/classical duality leads to simple explicit formulas for the variable transformations for the quantum K-theory ring of the quantum cotangent bundle of complete flags, 
explicitly proving its self-duality on the level of quantum K-theory rings. On the integrable systems side, this duality was long studied on the level of spectra of spin chain models and is known as {\it bispectral duality} \cite{MR2409414,MR2641196,Bulycheva:2012aa,Gaiotto:2013bwa}. In \cite{Koroteev:2023aa}, we studied 3d mirror symmetry in the case of $A$-type quivers and extended this construction to cyclic quivers using techniques from \cite{koroteev_zeitlin_2023} and ideas of string/gauge theory of \cite{Gaiotto:2008ak,Gaiotto:2008sa,Gaiotto:2013bwa}. 

In this paper we discuss various limits of this construction, first by reducing our analysis to quantum cohomology: there the dynamical equation for the qKZ system becomes differential. That leads to the correspondence between the trigonometric Gaudin integrable system (from the asymptotics of the dynamical part) and XXX model (from the asymptotics of the difference equation). On the level of many body systems that lead to the duality between rational Ruijsenaars-Schneider (rRS) models and trigonometric Calogero-Moser (tCM) systems, which are dual on the level of quantum/classical duality to the tGaudin and XXX models. 

The oper formulation of the related limits of the $QQ$-systems works as follows. 
The $QQ$-system for the XXX model differs by replacing the multiplicative $q$-action with the additive $\epsilon$-action leading to what we call $Z$-twisted $(G, \epsilon)$-opers, which is very much similar to $Z$-twisted $(G, q)$-opers. 

Whereas for the tGaudin model, the situation is different -- we deal with opers as connections, however, the $Z$-twisted condition is imposed in a peculiar way. Instead of $\mathbb{P}^1$ we work on a cylinder $\mathbb{P}^1\backslash\{0,\infty\}$ so the connection will exhibit singular behavior while approaching the boundaries. In the standard coordinates on $\mathbb{P}^1$ the residue of the oper connection at $0$ and $\infty$ is equal to $Z\in\mathfrak{h}$.

One can consider a classical limit of this construction as well, which does not have an enumerative interpretation. That leads to the quantum/classical duality between rational Gaudin models and 
rational Calogero-Moser spaces, which are self-dual with respect to the limit of symplectic/bispectral duality. 

\subsection{Zoo of Opers}
Figure \ref{fig:EtingofDiamond} combines the results on the family of opers as well as the corresponding quantum and classical integrable systems which appear in the study of each type of opers. In the main body of the paper, we discuss each corner of this `diamond'.
\begin{figure}
\includegraphics[scale=0.4]{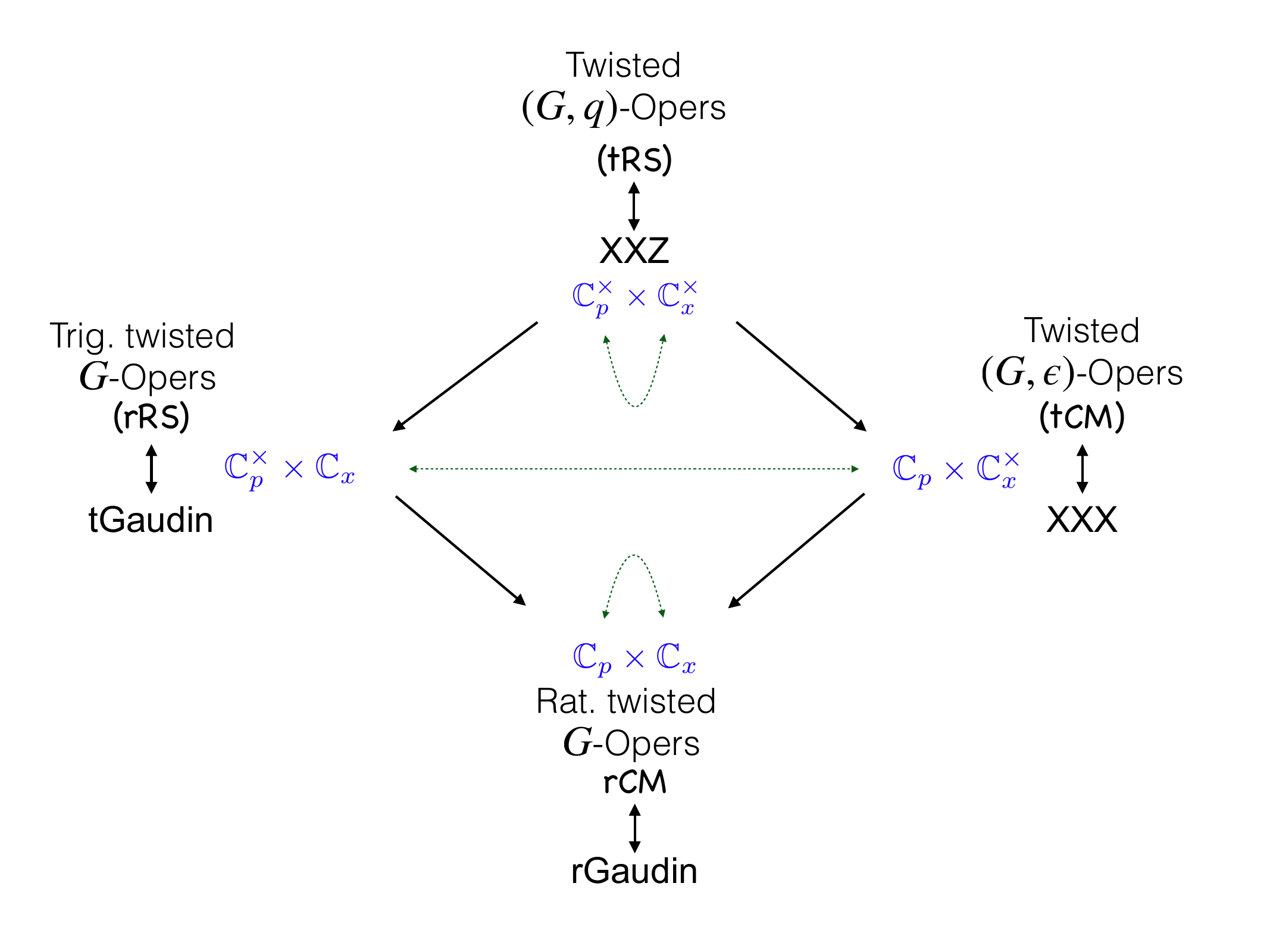}
\caption{The network of dualities between various types of opers and related integrable systems. Short vertical lines are the quantum/classical dualities, diagonal arrows show the double scaling limits between the models, while dashed lines designate the action of symplectic/bispectral dualities. The momenta $p$ and coordinates $x$ of the many body systems may take values in $\mathbb{C}^\times$ or $\mathbb{C}$ which is displayed in the figure.}
\label{fig:EtingofDiamond}
\end{figure}

Here the acronyms stand for the following integrable systems. Classical models are \textit{trigonometric Ruijsenaars-Schneider (tRS), rational Ruijsenaars-Schneider (rRS), trigonometric Calogero-Moser (tCM), rational Calogero-Moser (rCM)}. Meanwhile, the quantum spin chains are referred to as \textit{XXZ, XXX}-- Heisenberg spin chains with and without aniso\-tropies, \textit{trigonometric and rational Gaudin models -- tGaudin and rGaudin} respectively. All the above quantum systems can be solved using Bethe Ansatz.

\subsection{Elliptic Integrable Systems}
The $2\times 2$ diamond in Figure \ref{fig:EtingofDiamond} can be extended to a $3\times 3$ diamond which would include elliptic integrable models like the double elliptic integrable system (DELL) \cite{Koroteev:2019tz} as well as its degenerations -- the elliptic Ruijsenaars-Schneider system, the elliptic Calogero-Moser system, as well as their bispectral duals. We expect to find the corresponding elliptic generalizations of the space of opers on $\mathbb{P}^1$ and on the elliptic curve $\mathcal{E}$ which will describe the space of solutions of the novel elliptic $QQ$-systems. As of this writing, the bispectral dual of the DELL system is not known.

\subsection{Structure of the Paper}
In Section \ref{Sec:qOpers}, we study the top and right corners of the diamond which correspond to $Z$-twisted $(SL(r+1),q)$-opers and $(SL(r+1),\epsilon)$-opers respectively. Next, in Section \ref{Sec:DiffOpers}, we address the bottom and right corners where differential $SL(r+1)$-opers on $\mathbb{P}^1$ are discussed, one is gauge equivalent to constant regular semisimple element, another to a simple polar connection with residue given by regular semisimple element. We call these $SL(r+1)$-opers respectively as rationally $Z$-twisted and trigonometrically $Z$-twisted.   

In both sections, we prove the respective quantum/classical dualities between the space of opers and the $QQ$-systems (or the $qq$-systems for the differential opers). We demonstrate, for each corner of the diamond, that the conditions for the existence of the corresponding canonical nondegenerate opers provide the recipe to compute the Lax matrices for the related integrable systems. In the final Section \ref{Sec:CMSpace}, we provide the algebraic description of the Calogero-Moser space which can be used in deriving the trigonometric Ruijsenaars-Schneider Hamiltonians. Then we consider three different double-scaling limits which will land us on the remaining three corners of the diamond in Figure \ref{fig:EtingofDiamond}. We formulate the theorems on the bispectral duality between the systems involved.

\subsection{Acknowledgements}
P.K. is partlially supported by the DOE under grant DOE-SC0010008 to Rutgers and in part by the AMS-Simons travel grant. A.M.Z. is partially supported by Simons Collaboration Grant 578501 and NSF grant DMS-2203823

\section{$Z$-twisted $(SL(r+1),q)$-opers and $(SL(r+1),\epsilon)$-opers with regular singularities}\label{Sec:qOpers}
In this Section we will describe the difference oper structures we will be considering in this article. 
That will involve $Z$-twisted $(SL(r+1),q)$-opers and $(SL(r+1),\epsilon)$-opers on $\mathbb{P}^1$, both a difference versions of a connection with certain properties, related to two types of torus action on $\mathbb{P}^1$: multiplicative and additive correspondingly. 
In the next section we will discuss $Z$-twisted  Miura $SL(r+1)$ oper connections, one is again on $\mathbb{P}^1$, while another considered on a punctured disk, or, as we conveniently represent it, on a cylinder.

Each of the objects in this hierarchy can be thought of as a certain double-scaling limit of each other, which we will discuss as well. 

The exposition in this section will follow \cite{Koroteev:2018a} and \cite{koroteev_zeitlin_2023}.

\subsection{Miura $(SL(r+1),q)$-opers and $(SL(r+1),\epsilon)$-opers}
Let us consider the automorphisms $M_q: \mathbb{P}^1 \to \mathbb{P}^1$ and $M_{\epsilon}: \mathbb{P}^1 \to \mathbb{P}^1$ sending $z\to qz$, $z \mapsto z+\epsilon$, correspondingly, where $q\in\C^\times$ and $\mathbb{\epsilon}\in \C$.

In this Section, we will formulate all basic definitions for $(SL(r+1),q)$-opers. The corresponding definitions for $(SL(r+1),\epsilon)$-opers one obtains by replacing $M_q$ by $M_{\epsilon}$.

\begin{Def}    \label{qopflagD}
Let $U \subset \mathbb{P}^1$ be a Zariski open dense subset and let $V=U \cap M_q^{-1}(U)$.
 A meromorphic $(GL(r+1),q)$-{\em oper} on $\mathbb{P}^1$ is a triple $(\mathcal{A},E, \mathcal{L}_{\bullet})$, where $E$ is a vector bundle of rank $r+1$ on $\mathbb{P}^1$ and $\mathcal{L}_{\bullet}$ is the corresponding complete flag of the vector bundles, 
  $$\mathcal{L}_{r+1}\subset ...\subset \mathcal{L}_{i+1}\subset\mathcal{L}_i\subset\mathcal{L}_{i-1}\subset...\subset \mathcal{L}_1=E,$$ 
  where $\mathcal{L}_{r+1}$ is a line bundle, so that the meromorphic $(SL(r+1),q)$-connection 
  $\mathcal{A}\in \Hom_{\cO_{U}}(E,E^q)$, where $E^q$ is the pullback of $E$ under $M_q$,
  satisfies the following conditions:\\ 
i) $\mathcal{A}\cdot \mathcal{L}_i\subset \mathcal{L}^q_{i-1} $,\\
ii)  The restriction of $\mathcal{A}\in \text{Hom}(\mathcal{L}_{\bullet}, \mathcal{L}^q_{\bullet})$ to $V$ is invertible and satisfies the condition that the induced maps
  $$\bar{\mathcal{A}}_i:\mathcal{L}_{i}/\mathcal{L}_{i+1}\to \mathcal{L}^q_{i-1}/\mathcal{L}^q_{i},\qquad i = 2,\dots,r$$ are isomorphisms on $V$. \\
  An $(SL(r+1),q)$-$oper$ is a $(GL(r+1),q)$-oper with the condition that $det(\mathcal{A})=1$ on $U \cap M_q^{-1}(U)$.
\end{Def}

Note, that changing the trivialization of $E$ via $g(z) \in SL(r+1)(z)$ changes
$A(z)$ by the following $q$-gauge transformation
\begin{equation}   
\label{gauge tr}
A(z)\mapsto g(qz)A(z)g(z)^{-1}.
\end{equation}
giving $A$ the structure of $(SL(r+1),q)$-connection. 

The Definition \ref{qopflagD} can be reformulated in a local form -- given a section $s(z)$ of $\mathcal{L}_{r+1}$, the oper condition is as follows. Consider the following determinants:
\begin{align}\label{altqW} 
&{W}_i(s(z))=\cr
& \left(s(z)\wedge A(z) s(M_{q}z)\wedge A(M_{q}z) A(z)s(M_{q}^2z)\wedge\dots\wedge \Big(\prod_{j=0}^{i-2}(A(M_{q}^{i-2-j}z)\Big)s(M_{q}^{i-1}z)\right)\Bigg|_{\Lambda^i\cL_{r-i+2}^{q^{i-1}}}\cr
& \quad i=2,\dots, r+1
\end{align}
The oper conditions are equivalent to the fact that (\ref{altqW}) are nonzero.  
\begin{Def} 
We say that $(SL(r+1),q)$-oper has regular singularities defined by the collection of polynomials $\{\Lambda_i(z)\}_{i=1,\dots, r}$ when $\bar{\mathcal{A}}_i$ is an isomorphism away from the zeros of $\Lambda_i(z)$ for $i=1,\dots, r$.  
\end{Def}
In local terms, the regular singularities condition can be reformulated as follows: 
\begin{align}
\label{eq:WPDefs}
W_k(s(z))&=P_1(z) \cdot P_2(M_{q}z)\cdots P_{k}(M_{q}^{k-1}z),  \cr
P_i(z)&=\Lambda_{r}(z)\Lambda_{r-1}(z)\cdots\Lambda_{r-i+1}(z)\,.
\end{align} 

\begin{Def}
The $(SL(r+1),q)$-oper is called $Z-twisted$ if there exists $g(z)\in SL(r+1)(z)$ such that
\begin{eqnarray}    \label{Ag}
A(z)=g(qz)Z g(z)^{-1},
\end{eqnarray}
where $Z$ is a diagonal element of $SL(r+1)$.
\end{Def}

\begin{Def}  
A Miura $(SL(r+1),q)$-oper is a quadruple $(\mathcal{A},E, \mathcal{L}_{\bullet}, \hcL_\bullet)$, where 
$(\mathcal{A},E, \mathcal{L}_{\bullet})$ triple is $(SL(r+1),q)$-oper and the complete flag $\hcL_\bullet$ of subbundles in $E$ is preserved by the q-connection $A$.  
\end{Def}

A natural question is of course how many there are Miura q-opers for a given $Z$-twisted $(SL(r+1),q)$-oper.  

\begin{Prop}
Let $S_{r+1}$ be the symmetric group of $r+1$ elements. There are exactly $(r+1)!$ Miura opers for a given $Z$-twisted $(SL(r+1),q)$-oper if $Z$ is regular semisimple. 
\end{Prop}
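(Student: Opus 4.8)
The plan is to use the $Z$-twisted condition to pass to a trivialization in which the $q$-connection becomes the constant element $Z$, and then to identify Miura structures with the complete flags of $\mathbb{C}^{r+1}$ fixed by $Z$. First I would take $A(z)=g(qz)Zg(z)^{-1}$ as in the $Z$-twisted condition (\ref{Ag}) and apply the $q$-gauge transformation (\ref{gauge tr}) by $h=g^{-1}$, which replaces $A(z)$ by the constant connection $Z$. Under this change of trivialization a Miura flag $\hat{\mathcal{L}}_\bullet$ preserved by $A$ is carried to $\hat{\mathcal{L}}_\bullet'=g^{-1}\hat{\mathcal{L}}_\bullet$, and the preservation condition $A\cdot\hat{\mathcal{L}}_i\subset\hat{\mathcal{L}}_i^{q}$ becomes $Z\cdot\hat{\mathcal{L}}_i'(z)\subset\hat{\mathcal{L}}_i'(qz)$ at the level of fibers; since $Z\in SL(r+1)$ is invertible and the ranks are fixed this is the equality $Z\cdot\hat{\mathcal{L}}_i'(z)=\hat{\mathcal{L}}_i'(qz)$ of subspaces. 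Conversely any constant $Z$-invariant flag produces, after conjugating back by $g$, a Miura structure, so it suffices to count the $A$-preserved flags $\hat{\mathcal{L}}_\bullet'$.

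The second and main step is to show that each preserved flag $\hat{\mathcal{L}}_\bullet'$ is in fact constant, i.e. built from fixed subspaces of $\mathbb{C}^{r+1}$. It suffices to treat the smallest member of the flag and then induct. This smallest member is a line bundle $\ell$, and the condition reads $Z\cdot\ell(z)=\ell(qz)$, i.e. $\ell\colon\mathbb{P}^1\to\mathbb{P}^r$ is a meromorphic map with $\ell(qz)=Z\cdot\ell(z)$. Writing $\ell=[f_1:\dots:f_{r+1}]$ in the eigenbasis $e_1,\dots,e_{r+1}$ of $Z$, with eigenvalues $\zeta_1,\dots,\zeta_{r+1}$, the ratios $g_{ij}=f_i/f_j$ for indices $i,j$ in the support of $\ell$ satisfy $g_{ij}(qz)=(\zeta_i/\zeta_j)\,g_{ij}(z)$. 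A rational solution of $g(qz)=c\,g(z)$ must be a monomial $c_{ij}z^{m_{ij}}$ with $\zeta_i/\zeta_j=q^{m_{ij}}$, since for $q$ not a root of unity the multiplication-by-$q$ orbit of any finite zero or pole is infinite. Hence $\zeta_i/\zeta_j\in q^{\mathbb{Z}}$ whenever $i,j$ both lie in the support; the non-resonance condition $\zeta_i/\zeta_j\notin q^{\mathbb{Z}}$ for $i\neq j$ then forces the support to be a single index, so $\ell=\mathbb{C}e_k$ is a constant eigenline. Passing to the quotient by this eigenline, $Z$ induces a regular semisimple operator on $\mathbb{C}^{r+1}/\mathbb{C}e_k$ with the remaining eigenvalues, the induced flag again solves the functional equation, and by induction $\hat{\mathcal{L}}_\bullet'$ is a constant flag invariant under $Z$. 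I expect this reduction to constant flags to be the crux: the non-resonance input $\zeta_i/\zeta_j\notin q^{\mathbb{Z}}$ ($i\neq j$) is exactly what the hypothesis ``$Z$ regular semisimple'' must supply (its distinct-eigenvalue part gives $\zeta_i/\zeta_j\neq 1$, and genericity of $q$ gives $\zeta_i/\zeta_j\neq q^{m}$ for $m\neq 0$); resonant ratios would admit extra $z$-dependent invariant lines and spoil the count.

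Finally I would count the constant $Z$-invariant complete flags. Because $Z$ is regular semisimple its eigenspaces are the $r+1$ distinct eigenlines $\mathbb{C}e_1,\dots,\mathbb{C}e_{r+1}$, and every $Z$-invariant subspace is a direct sum of a subset of them. A complete $Z$-invariant flag is therefore obtained by adjoining the eigenlines one at a time, so such flags are in bijection with the total orderings of $\{e_1,\dots,e_{r+1}\}$, i.e. with $S_{r+1}$. Equivalently, these flags are precisely the $T$-fixed points of the flag variety $SL(r+1)/B$ for the maximal torus $T$ containing $Z$, which by the Bruhat decomposition are indexed by the Weyl group $W=S_{r+1}$. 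Tracing back through the gauge transformation, each such flag $\hat{\mathcal{L}}_\bullet'$ yields exactly one Miura structure $\hat{\mathcal{L}}_\bullet=g\,\hat{\mathcal{L}}_\bullet'$ on the given $Z$-twisted oper, and distinct flags give distinct structures, whence the number of Miura structures is $|S_{r+1}|=(r+1)!$.
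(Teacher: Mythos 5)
The paper states this Proposition bare, with no in-text proof (its differential counterpart later in the paper is quoted from \cite{Brinson:2021ww}), so there is no argument of the paper's to compare against line by line; your reconstruction is the standard one from the $q$-oper literature: gauge $A$ to the constant $q$-connection $Z$, show that any flag preserved in the $q$-difference sense is constant, and count constant $Z$-invariant complete flags as the torus-fixed points of $SL(r+1)/B$, i.e.\ the Weyl group $S_{r+1}$. The gauge step, the monomial lemma for rational solutions of $g(qz)=c\,g(z)$ (valid when $q$ is not a root of unity, the standing assumption here), the induction through quotients, and the final Bruhat count are all correct.

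The one step that does not hold as written is your claim that ``$Z$ regular semisimple'' together with ``genericity of $q$'' supplies the non-resonance $\xi_i/\xi_j\notin q^{\mathbb{Z}}$ for $i\neq j$. Regular semisimplicity only excludes the exponent $m=0$ (it gives $\xi_i\neq\xi_j$), and $q$ and $Z$ are independent data: for any fixed $q$, even generic, there exist regular semisimple $Z$ with $\xi_i=q^m\xi_j$, $m\neq 0$, and then, exactly as your own remark anticipates, the lines spanned by $e_i+c\,z^m e_j$ satisfy $Z\cdot\ell(z)=\ell(qz)$ for every $c\in\mathbb{C}$, producing a continuous family of Miura structures and breaking the count $|S_{r+1}|$. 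So the Proposition must be read with $q$-nonresonance built into the hypothesis; this is precisely the ``$q$-distinct'' condition ($q^{\mathbb{Z}}u\cap q^{\mathbb{Z}}v=\varnothing$) that the paper imposes on the twist parameters a few paragraphs later in its nondegeneracy definition, and it is a genuinely $q$-difference phenomenon --- in the differential analogue regular semisimplicity alone suffices, since $e^{(\gamma_i-\gamma_j)z}$ is never rational unless $\gamma_i=\gamma_j$. With that hypothesis made explicit your proof is complete; without it no proof can exist, because the literal statement fails in the resonant case.
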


Trivializing the flag of bundles $\hat{\mathcal{L}}_\bullet$ and choosing a standard basis $e_1,e_2,\dots, e_{r+1}$ in the space of corresponding sections, we can express relative position between two flags $\mathcal{L}_\bullet$ and $\hat{\mathcal{L}}_\bullet$ using the following determinant:
\begin{equation}\label{qD}
\mathcal{D}_k(s)=e_1\wedge\dots\wedge{e_{r+1-k}}\wedge
s(z)\wedge Z\, s(M_qz)\wedge\dots\wedge Z^{k-1} s(M_q^{k-1}z)\,
\end{equation}
for $k=2,\dots_r+1$ and where $s(z)$ is vector of polynomials with components $\{s_i(z)\}_{i=1,\dots, r+1}$ in the basis of $e_i$'s. 
The functions $\mathcal{D}_k(s)$ has a subset of zeroes, which coincide with those of
$W_k(s)(z)$. The remaining zeros of $\mathcal{D}_k(s)$ are given by points at which the two flags fail to be in general position.
More explicitly, in matrix notation we have
\begin{equation}
\det\begin{pmatrix} \,     1 & \dots & 0 & s_{1}(z) & \xi_1\, s_{1}(M_qz) & \cdots  &  (\xi_1)^{k-1}s_{1}(M_q^{k-1}z) \\ 
 \vdots & \ddots & \vdots& \vdots & \vdots & \ddots & \vdots \\  
0 & \dots & 1&s_{k}(z) &\xi_{k}\, s_{k}(M_qz) & \dots &  (\xi_{k})^{k-1} s_{k}(M_q^{k-1}z)  \\  
0 & \dots & 1&s_{k+1}(z) &\xi_{k+1}\, s_{k+1}(M_qz) & \dots &  (\xi_{k+1}p)^{k-1} s_{k+1}(M_q^{k-1}z)  \\  
\vdots & \ddots & \vdots&\vdots & \vdots & \ddots & \vdots \\
0 & \dots & 0&s_{r+1}(z) & \xi_{r+1} \,s_{r+1}(M_qz) & \dots & (\xi_{r+1})^{k-1}s_{r+1}(M_q^{k-1}z)  \, \end{pmatrix} =\alpha_{k} W_{k}
\cV_{k} \,; 
\label{eq:MiuraQOperCond}
\end{equation}
Here  $Z={\rm diag}(\xi_1, \dots, \xi_{r+1})$ and 
\begin{equation} 
\mathcal{V}_k(z) = \prod_{a=1}^{r_k}(z-v_{k,a})\,,
\label{eq:BaxterRho}
\end{equation}
while $\alpha_k$ are some constants.

Since $\cD_{r+1}(s)=\cW_{r+1}(s)$, we have
$\cV_{r+1}=1$.  We also set $\cV_0=1$; this is consistent with the fact
that \eqref{qD} also makes sense for $k=0$, giving
$\cD_0=e_1\wedge\dots\wedge e_{r+1}$.

We can also rewrite \eqref{eq:MiuraQOperCond} as
\begin{equation}
  \underset{i,j}{\det} \left[\xi_i^{j-1}s^{(j-1)}_{r+1-k+i}(z)\right] = \beta_{k} W_{k} \mathcal{V}_{k}\,,
\label{eq:MiuraDetForm}
\end{equation}
where $i,j = 1,\dots,k$ and $s_l^{(m)}=s_l(M_q^mz)$. 

So far, we could freely exchange $M_q$ and $M_{\epsilon}$ and all of the above definitions could be used to interchange $(SL(r+1),\epsilon)$ and $(SL(r+1),q)$-opers. Now we will encounter the differences.

\subsection{$(SL(r+1),q)$-opers, the $QQ$-system and Bethe Ansatz}

The following theorem gives the relations on the roots of $\mathcal{V}_k(z)$, which as we will see later, will allow to relate $(G,q)$-opers to Bethe Ansatz equations. See \cite{J.R.Li:2012aa} for earlier developments.

\begin{Thm}[\cite{KSZ,koroteev_zeitlin_2023}]\label{qWthKSZ}
Polynomials $\{\mathcal{V}_k(z)\}_{k=1,\dots, r}$ provide the solution to the $QQ$-system 
\begin{equation}
 \xi_{i+1} Q^+_i(M_qz) Q^{-}_i(z) -  \xi_{i} Q^+_i(z)Q^-_i(M_qz) = (\xi_{i+1}-\xi_i)\Lambda_i(z)Q^+_{i-1}(M_q z)Q^+_{i+1}(z)\, ,
\label{eq:dQQrelations}
\end{equation}
so that $Q^+_j(z)=\mathcal{V}_j(z)$. The polynomials $Q^+_j, Q^-_j$ for $j=1,\dots,r$ can be presented using minors
\begin{equation}
Q^+_j(z)= \frac{1}{F_i(z)}\frac{\text{det}\Big( M_{1,\ldots,j} \Big)}{\text{det}\Big( V_{1,\ldots,j} \Big)}\,,
\qquad
Q^-_j(z)= \frac{1}{F_i(z)}\frac{\text{det}\Big( M_{1,\ldots,j-1,j+1} \Big)}{\text{det}\Big( V_{1,\ldots,j-1,j+1} \Big)}\,,
\label{eq:QPolyM}
\end{equation}
where $F_i(z)=M_q^{i-r}W_{r-i}(z)$, 
\begin{equation}
M_{i_1,\ldots,i_j} = \begin{bmatrix} \,  s_{i_1} & \xi_{i_1} s_{i_1}^{(1)} & \cdots & \xi_{i_1}^{j-1} s_{i_1}^{(j-1)} \\ \vdots & \vdots & \ddots & \vdots \\  s_{i_j}  & \xi_{i_j} s_{i_j}^{(1)} & \cdots & \xi_{i_j}^{j-1} s_{i_j}^{(j-1)} \, \end{bmatrix}\,,
\qquad
V_{i_1,\ldots,i_j} =\begin{bmatrix} \,  1 & q\xi_{i_1}  & \cdots & q^{j-1}\xi_{i_1}^{j-1} \\ \vdots & \vdots & \ddots & \vdots \\  1 & q\xi_{i_j} & \cdots &q^{j-1} \xi_{i_j}^{j-1}\, \end{bmatrix}\,,
\label{eq:MM0Ind2}
\end{equation}
are the quantum Wronskian and the Vandermonde matrix respectively \footnote{We rescaled $V$ by powers of $q$ compared to \cite{KSZ} for further convenience.}.
\end{Thm}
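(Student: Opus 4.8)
The plan is to express every factor in \eqref{eq:dQQrelations} as a minor of one matrix and to derive the $QQ$-relation from a single Desnanot--Jacobi (Sylvester) determinantal identity, with the coefficients $\xi_i,\xi_{i+1},\xi_{i+1}-\xi_i$ and the polynomial $\Lambda_i(z)$ produced respectively by the diagonal twist $Z$ and by the regular-singularity normalization \eqref{eq:WPDefs}. Write $v_c(z)=Z^{c}s(M_q^{c}z)$, so that the $c$-th column of the quantum Wronskian $M_{i_1,\dots,i_j}$ of \eqref{eq:MM0Ind2} is the restriction of $v_{c-1}$ to rows $i_1,\dots,i_j$; hence $\det M_{i_1,\dots,i_j}$ is the $e_{i_1}\wedge\dots\wedge e_{i_j}$ coordinate of the decomposable vector $v_0\wedge\dots\wedge v_{j-1}\in\Lambda^{j}$. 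In particular $Q^{+}_j$ and $Q^{-}_j$ of \eqref{eq:QPolyM} are two coordinates of the \emph{same} wedge, namely its $e_1\wedge\dots\wedge e_j$ and $e_1\wedge\dots\wedge e_{j-1}\wedge e_{j+1}$ components. The identification $Q^{+}_j=\mathcal{V}_j$ is obtained by expanding the Miura determinant \eqref{eq:MiuraQOperCond} along its $r+1-j$ standard-basis columns, which collapses it to the minor \eqref{eq:MiuraDetForm}; dividing by the leading Vandermonde constant $\det V_{1,\dots,j}$ and the Wronskian factor $F_j$ gives the stated formula and the boundary values $Q^{+}_0=Q^{+}_{r+1}=1$.

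The heart of the argument is one instance of Sylvester's identity, applied to the $(i+1)\times(i+1)$ submatrix on rows $1,\dots,i+1$ and columns $v_0,\dots,v_i$, with distinguished core on rows $1,\dots,i-1$ and columns $v_1,\dots,v_{i-1}$. Its full determinant is $\det M_{1,\dots,i+1}$ (giving $Q_{i+1}(z)$), its core determinant is $\det M_{1,\dots,i-1}$ evaluated on the \emph{shifted} window $v_1,\dots,v_{i-1}$ (giving $Q_{i-1}(M_qz)$), and its four bordered $i\times i$ minors are exactly the quantum Wronskians underlying $Q^{+}_i(z),Q^{+}_i(M_qz),Q^{-}_i(z),Q^{-}_i(M_qz)$. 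Sylvester's relation $b_{11}b_{22}-b_{12}b_{21}=(\det\mathrm{core})(\det\mathrm{full})$ then produces precisely the bilinear combination of $Q^{\pm}_i$ on the left of \eqref{eq:dQQrelations} against $Q_{i-1}(M_qz)Q_{i+1}(z)$ on the right. It is transparent here why $Q_{i-1}$ enters shifted while $Q_{i+1}$ does not: the core uses the window $v_1,\dots,v_{i-1}$, whereas the full determinant uses $v_0,\dots,v_i$.

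It remains to restore the normalizations and extract the coefficients. The shift identity $v_c(M_qz)=Z^{-1}v_{c+1}(z)$ shows that an $M_q$-shifted minor differs from the corresponding unshifted minor by the eigenvalue weight of $\Lambda^{i}Z^{-1}$ on the relevant coordinate; the two left-hand products thus acquire weights $\xi_1\cdots\xi_i$ and $\xi_1\cdots\xi_{i-1}\xi_{i+1}$, whose common factor $\xi_1\cdots\xi_{i-1}$ cancels and leaves $\xi_i$ and $\xi_{i+1}$ in front of the two terms. Converting the minors on the right into the normalized $Q_{i-1}(M_qz)Q_{i+1}(z)$ leaves the Vandermonde ratio
\[
\frac{\det V_{1,\dots,i-1}\,\det V_{1,\dots,i+1}}{\det V_{1,\dots,i}\,\det V_{1,\dots,i-1,i+1}}=\frac{\prod_{a=1}^{i}(\xi_{i+1}-\xi_a)}{\prod_{a=1}^{i-1}(\xi_{i+1}-\xi_a)}=\xi_{i+1}-\xi_i,
\]
while the ratio of Wronskian factors $F_{i-1}(M_qz)F_{i+1}(z)/\!\left(F_i(z)F_i(M_qz)\right)$ telescopes through \eqref{eq:WPDefs} (using $W_k/W_{k-1}=P_k$ and $P_k/P_{k-1}=\Lambda_{r-k+1}$) to $\Lambda_i(z)$, up to a power of $q$.

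I expect the decisive difficulty to be the bookkeeping of this last step rather than any conceptual obstacle. One must track the powers of $q$ carried by the automorphism $M_q$, by the $q$-rescaled Vandermonde $V$ (cf. the footnote to \eqref{eq:MM0Ind2}), and by the normalization $F_i=M_q^{i-r}W_{r-i}$, and verify they cancel so that the eigenvalue weights, the factor $\xi_{i+1}-\xi_i$, and the argument of $\Lambda_i$ all land exactly as in \eqref{eq:dQQrelations}; one must also confirm that $F_j\det V_{1,\dots,j}$ divides $\det M_{1,\dots,j}$ exactly, so that $Q^{\pm}_j$ are genuine polynomials, which follows from $\mathcal{D}_j=\alpha_j W_j\mathcal{V}_j$ and the factorization \eqref{eq:WPDefs}. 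Finally, the precise distribution of $\xi_i$ versus $\xi_{i+1}$ over the two terms (and the overall sign) is fixed by the orientation convention for $Z$ and for the flag -- in particular by whether $\mathcal{V}_j$ is read as a minor on the top $j$ rows as in \eqref{eq:QPolyM} or on the bottom $j$ rows as produced by \eqref{eq:MiuraDetForm} -- and reconciling these is the last piece of the bookkeeping.
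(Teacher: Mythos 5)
Your proof is correct and is essentially the argument of the cited sources: the paper itself imports Theorem \ref{qWthKSZ} from \cite{KSZ,Koroteev:2020tv} without proof, and there (as in the paper's own proof of the differential analog, Lemma \ref{Th:LemmaLCid} feeding Theorem \ref{Wth}) the $QQ$-relation is obtained exactly as you do, from the Desnanot--Jacobi/Sylvester identity applied to minors of the quantum Wronskian, with the twist weights, the Vandermonde ratio $\xi_{i+1}-\xi_i$, and the regular-singularity factorization \eqref{eq:WPDefs} supplying $\xi_i$, $\xi_{i+1}$, and $\Lambda_i(z)$. The residual sign/row-ordering ambiguity you flag (top rows in \eqref{eq:QPolyM} versus bottom rows in \eqref{eq:MiuraDetForm}) is a genuine convention mismatch inside the paper itself, not a gap in your argument.
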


Moreover, in the case of semisimple $Z$, for a given $Z$-twisted $(SL(r+1,q))$-oper, the 
$QQ$-system description of the set of Miura opers merely corresponds to the application of symmetric group to the set of $\xi_i$ and  $s_i(z)$ in the context of Theorem (\ref{qWthKSZ}). The system of equations which is a union of $QQ$-systems for all the Miura $(SL(r+1,q))$-opers for a given $Z$-twisted $(SL(r+1),q)$-oper, is known as a {\it full $QQ$-system}, which corresponds to the relations between various minors in the $\mathcal{D}_{r+1}(z)$. 

There is a way to see an algebraic relation between the roots of $Q_i^+(z)$ explicitly. 
Let $\Lambda_j(z)=\prod_{c=1}^{M_j}(z-a_{j,c})$ and $Q_j^+(z)=\prod_{c=1}^{N_j}(z-s_{j,c})$. 
Let us impose the following nondegeneracy condition on the roots of $Q_j^+(z)$. First of all, let us call 
$u,v\in \mathbb{P}^1$ to be $q$-distinct if $q^{\mathbb{Z}}u\cap q^{\mathbb{Z}}v=\varnothing$. Then if 
zeroes of $Q^+_i(z)$, $Q^+_{i\pm 1}$ are $q$-distinct from each other and if 
$\xi, \xi_{i+1}$ are $q$-distinct, we call such $QQ$-system and the corresponding $Z$-twisted Miura $(SL(r+1),q)$-oper {\it nondegenerate}.

Then we have the following theorem.

\begin{Thm}[\cite{Frenkel:2020}]
\label{Th:XXZBethecorr}
The solutions of the nondegenerate $SL(r+1)$ $QQ$-system \eqref{eq:dQQrelations} are in one-to-one correspondence with the solutions to the following algebraic equations between the roots of $\{Q_i^+(z)\}_{i=1,\dots, r}$ known as the Bethe Ansatz equations for $\mathfrak{sl}(n+1)$ XXZ spin chain:
\begin{equation}    \label{eq:bethe}
\frac{Q^+_{i}(qs_{i,k})}{Q^+_{i}(q^{-1}s_{i,k})} \frac{\xi_i}{\xi_{i+1}}=
- \frac{\Lambda_i(s_{i,k}) Q^{+}_{i+1}(qs_{i,k})Q^{+}_{i-1}(s_{i,k})}{\Lambda_i(q^{-1}s_{i,k})Q^{+}_{i+1}(s_{i,k})Q^{+}_{i-1}(q^{-1}s_{i,k})},
\end{equation}
where $i=1,\ldots,r; k=1,\ldots,r+1_i$.
\end{Thm}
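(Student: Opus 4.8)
The plan is to establish the bijection by constructing the two directions separately and then checking that they are mutually inverse; the forward map (from a QQ-solution to a Bethe solution) is an elementary evaluation-and-division argument, while the reverse map (reconstructing the QQ-solution from Bethe roots) is where the nondegeneracy hypotheses carry the weight. For the forward direction, start with a solution $\{Q^+_i,Q^-_i\}$ of \eqref{eq:dQQrelations} and fix a root $s_{i,k}$ of $Q^+_i$. I would evaluate \eqref{eq:dQQrelations} at $z=s_{i,k}$, where the term $\xi_i Q^+_i(z)Q^-_i(M_q z)$ drops out since $Q^+_i(s_{i,k})=0$, and then at $z=q^{-1}s_{i,k}$, where instead $\xi_{i+1}Q^+_i(M_q z)Q^-_i(z)$ drops out because $Q^+_i(M_q\cdot q^{-1}s_{i,k})=Q^+_i(s_{i,k})=0$. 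This yields two expressions for the same quantity $Q^-_i(s_{i,k})$. The nondegeneracy conditions guarantee that $s_{i,k}$ is simple, that $Q^+_i(qs_{i,k})$ and $Q^+_i(q^{-1}s_{i,k})$ are nonzero, and that $Q^-_i(s_{i,k})\neq 0$, so dividing the two relations cancels the common factors $Q^-_i(s_{i,k})$ and $(\xi_{i+1}-\xi_i)$ and, after rearrangement, produces the Bethe equation \eqref{eq:bethe}.

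For the reverse direction, suppose the roots $\{s_{i,k}\}$ solve \eqref{eq:bethe}, set $Q^+_i(z)=\prod_k(z-s_{i,k})$, and read \eqref{eq:dQQrelations} as an inhomogeneous first-order $q$-difference equation for the unknown $Q^-_i$, whose right-hand side $(\xi_{i+1}-\xi_i)\Lambda_i(z)Q^+_{i-1}(M_q z)Q^+_{i+1}(z)$ is a known polynomial. I would first extract the mandatory values $Q^-_i(s_{i,k})$ from the two evaluations at $z=s_{i,k}$ and $z=q^{-1}s_{i,k}$; the Bethe equation \eqref{eq:bethe} is \emph{precisely} the compatibility condition that these two determinations of $Q^-_i(s_{i,k})$ coincide, so the interpolation data is consistent. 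I would then show there is a unique polynomial $Q^-_i$, of the degree forced by matching leading terms in the QQ-system, realizing this data. Uniqueness follows because the associated homogeneous equation $\xi_{i+1}Q^+_i(M_q z)\,Y(z)=\xi_i Q^+_i(z)\,Y(M_q z)$ admits no nonzero polynomial solution once $\xi_i,\xi_{i+1}$ are $q$-distinct: comparing leading coefficients would force $\xi_{i+1}/\xi_i\in q^{\mathbb{Z}}$, contradicting $q^{\mathbb{Z}}\xi_i\cap q^{\mathbb{Z}}\xi_{i+1}=\varnothing$. Existence then follows either from a dimension count against this trivial kernel or, equivalently, by checking that the candidate rational $Q^-_i$ has no poles at the $s_{i,k}$ thanks to the Bethe relations.

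The main obstacle I expect is exactly this reverse step: promoting the pointwise data $\{Q^-_i(s_{i,k})\}$ to a genuine polynomial of the correct degree and confirming that \eqref{eq:dQQrelations} then holds as an \emph{identity} of polynomials rather than merely at the sampled points. The last verification is a degree count — both sides have controlled degree and agree at more than that many values, hence are equal — but it demands careful bookkeeping of the degrees of $Q^\pm_i$, $\Lambda_i$, and $Q^+_{i\pm1}$ so that the interpolation problem is neither over- nor under-determined. It is here that the $q$-distinctness hypotheses are indispensable: they keep the interpolation nodes simple and mutually distinct and eliminate the spurious kernel elements that would otherwise destroy uniqueness. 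Granting these points, mutual inversion is immediate, since each $Q^+_i$ is determined by its root set and each $Q^-_i$ is then uniquely determined by the $Q^+$'s, so the two constructions invert one another and the correspondence is a bijection.
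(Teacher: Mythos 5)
The paper itself states this theorem without proof, importing it from \cite{Frenkel:2020}, and your argument reproduces the standard proof given there: evaluating \eqref{eq:dQQrelations} at $z=s_{i,k}$ and $z=q^{-1}s_{i,k}$ and dividing out the common factor $Q^-_i(s_{i,k})$ (nonzero by nondegeneracy) for the forward direction, and, conversely, treating \eqref{eq:dQQrelations} as an inhomogeneous first-order $q$-difference equation for $Q^-_i$, with the Bethe equations arising as precisely the compatibility/residue conditions for a polynomial solution and with $q$-distinctness of $\xi_i,\xi_{i+1}$ eliminating the homogeneous kernel via the leading-coefficient comparison. One cosmetic caveat: literal evaluation of \eqref{eq:dQQrelations} produces the twist ratio $\xi_{i+1}/\xi_i$ and the factors $Q^+_{i-1}(qs_{i,k})Q^+_{i+1}(s_{i,k})$ in the numerator, i.e.\ \eqref{eq:bethe} with the roles of $i\pm 1$ and of $\xi_i,\xi_{i+1}$ interchanged, so your derivation matches the displayed equation only up to this (apparently typographical) convention mismatch, which does not affect the correctness of your argument.
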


Following \cite{koroteev_zeitlin_2023,KSZ} and \cite{KoroteevZeitlinCrelle} we obtain the following theorem.

\begin{Thm}
There is a one-to-one correspondence between solutions of the $QQ$-system (\ref{eq:dQQrelations}) and $Z$-twisted Miura $(SL(r+1),q)$-opers with regular singularities.
\end{Thm}

\subsection{The tRS model and $(SL(N),q)$-opers}

Now let us choose an interesting specific choice of regular singularities and some nondegeneracy conditions.
\begin{Def}\label{canqop}
We will call $Z$-twisted Miura $(SL(r+1,q))$-oper \textit{canonical} if it satisfies the following conditions:
\begin{enumerate}
\item $Z$ is regular semisimple,
\item ${\rm deg}(\mathcal{D}_k)=k$,
\item This oper does not have regular singularities except for the roots of 
\begin{equation}\label{Wrel}
\Lambda(z)=\mathcal{D}_{r+1}(z), 
\end{equation}
which are distinct.
\end{enumerate}
\end{Def}

Then we immediately have the following proposition.

\begin{Prop}
The polynomials $\{s_i(z)\}_{i=1,\dots, r+1}$ describing the line bundle $\mathcal{L}_r$ are 
all of degree one.
\end{Prop}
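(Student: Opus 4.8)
The plan is to read off the degrees of the polynomials $s_i(z)$ from the determinant identities governing the canonical oper, using the degree constraints imposed in Definition \ref{canqop}. First I would examine the determinant formula \eqref{eq:MiuraDetForm}, which expresses $\mathcal{D}_k(s)$ (equivalently the minor $\det[\xi_i^{j-1}s^{(j-1)}_{r+1-k+i}(z)]$) as $\beta_k W_k \mathcal{V}_k$. The key observation is that condition (2), ${\rm deg}(\mathcal{D}_k)=k$, together with the nondegeneracy condition (3) that the oper has no regular singularities except at the distinct roots of $\Lambda(z)=\mathcal{D}_{r+1}(z)$, pins down the polynomial degrees on both sides. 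Since there are no regular singularities away from the top-level relation, each $\Lambda_i(z)$ for $i=1,\dots,r$ must be trivial (constant), so by \eqref{eq:WPDefs} each $W_k$ is a constant, and by \eqref{eq:BaxterRho} the auxiliary $\mathcal{V}_k$ likewise degenerate to constants for $k\le r$, with $\mathcal{V}_{r+1}=1$ already fixed.

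The main computation is then a degree count in $k$. Setting $k=1$ in \eqref{eq:MiuraDetForm} (or directly in \eqref{qD}) gives $\mathcal{D}_1(s)$ as essentially a single entry $s_{r+1}(z)$ (up to constants), and condition (2) forces ${\rm deg}(\mathcal{D}_1)=1$, hence $\deg s_{r+1}=1$. Proceeding inductively, I would use the multiplicativity structure of $W_k=P_1(z)P_2(M_q z)\cdots P_k(M_q^{k-1}z)$ from \eqref{eq:WPDefs}: since all $W_k$ are constant (the $\Lambda_i$ being trivial), the degree of the left-hand determinant in \eqref{eq:MiuraDetForm} must equal $\deg(\mathcal{V}_k)=0$ for $k\le r$ — wait, this must be reconciled with ${\rm deg}(\mathcal{D}_k)=k$. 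The resolution, which is the crux, is that $\mathcal{D}_k$ and the reduced minor differ by the framing factor $F_i(z)$ and by powers of $W$; tracking these carefully shows that each increment $\mathcal{D}_k \to \mathcal{D}_{k+1}$ adds exactly one to the degree, and since the $k\times k$ determinant of entries $\xi_i^{j-1}s^{(j-1)}_\ell(z)$ is a sum of products of $k$ of the $s_\ell$ (each shifted by $M_q$, which preserves degree), linearity of degree forces each individual $s_i(z)$ to contribute degree exactly one.

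The cleanest route is to argue that the collection $\{\mathcal{D}_k\}$ with $\deg\mathcal{D}_k=k$ and $\deg\mathcal{D}_{r+1}=r+1=\deg\Lambda$ forms a flag-compatible tower where the successive quotients in \eqref{qD} each have degree one; since the $(k{+}1)$-st column adjoined in passing from $\mathcal{D}_k$ to $\mathcal{D}_{k+1}$ involves $Z^k s(M_q^k z)$, and the bundle maps $\bar{\mathcal{A}}_i$ are isomorphisms (no singularities), the section $s(z)$ of $\mathcal{L}_r$ must have all components $s_i(z)$ of the same degree, namely one. I expect the main obstacle to be the bookkeeping that disentangles the genuine degree growth of $\mathcal{D}_k$ (which comes from the Vandermonde/$q$-shift structure and the single added column) from spurious contributions, and confirming that the regularity condition (3) indeed forces every intermediate $\mathcal{V}_k$ and $W_k$ to be constant so that no component $s_i$ can have degree exceeding one; I would verify the base and inductive steps explicitly on \eqref{eq:MiuraDetForm} for small $k$ to fix the constants $\beta_k$ and then invoke the induction.
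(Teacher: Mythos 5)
Your proposal contains a genuine gap, and the route through the singularity structure is also internally inconsistent. The claim that condition (3) forces every $\Lambda_i$, hence every $W_k$ \emph{and} every $\mathcal{V}_k$, to be constant cannot be right: by \eqref{Wrel} we have $\mathcal{D}_{r+1}(z)=\Lambda(z)$ of degree $r+1$, and $\mathcal{D}_{r+1}=W_{r+1}$ is a product of $q$-shifts of the $\Lambda_i$ by \eqref{eq:WPDefs}, so the $\Lambda_i$ cannot all be trivial; moreover, since $\mathcal{D}_k=\beta_k W_k\mathcal{V}_k$ and condition (2) gives $\deg\mathcal{D}_k=k$, one has $\deg W_k+\deg\mathcal{V}_k=k$, so ``both constant'' is impossible for $k\geq 1$. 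You noticed this clash yourself mid-argument, but the ``resolution'' you offer --- that $\mathcal{D}_k$ and the minor differ by the factor $F_i(z)$ and powers of $W$ --- does not exist: \eqref{eq:MiuraQOperCond} and \eqref{eq:MiuraDetForm} identify $\mathcal{D}_k$ with the $k\times k$ minor exactly (up to a constant), and $F_i$ enters only the formulas \eqref{eq:QPolyM} for $Q_j^{\pm}$, which play no role in this degree count.

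The actual content of the proposition, which your last paragraph asserts but never proves, is that passing from $\mathcal{D}_k$ to $\mathcal{D}_{k+1}$ raises the degree by exactly $\deg s_{r+1-k}$, i.e.\ that no cancellation of leading terms occurs; a priori one only has the inequality $\deg\mathcal{D}_k\leq\sum_{i=r+2-k}^{r+1}\deg s_i$, and appealing to the maps $\bar{\mathcal{A}}_i$ being isomorphisms does not preclude such cancellation. The missing (and intended, essentially one-line) step is the leading-coefficient computation: writing $d_l=\deg s_l$ with leading coefficients $c_l$, and noting that $s_l(M_q^{j-1}z)$ has leading term $q^{(j-1)d_l}c_l z^{d_l}$, the coefficient of $z^{\sum d_l}$ in the minor of \eqref{eq:MiuraDetForm} is $\prod_l c_l$ times the determinant $\det\left[\left(q^{d_l}\xi_l\right)^{j-1}\right]$, a Vandermonde determinant in the variables $q^{d_l}\xi_l$, which is nonzero because the eigenvalues of $Z$ are pairwise $q$-distinct (regular semisimplicity of $Z$ from condition (1), together with the nondegeneracy in force for this correspondence). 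Hence $\deg\mathcal{D}_k=\sum_{i=r+2-k}^{r+1}d_i$ \emph{exactly}, and condition (2) telescopes: $d_{r+1}=1$, $d_r+d_{r+1}=2$, and so on, giving $d_i=1$ for all $i$. Without this Vandermonde nonvanishing, your induction is circular, since ``each increment adds exactly one to the degree'' is precisely what has to be established.
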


Without loss of generality we can assume $s_i(z)$ to be monic, namely $s_i(z)=z-p_i$ for some complex $p_i$. The only relation, which determines the space of such objects is the relation (\ref{Wrel}). We will call the space of canonical $Z$-twisted Miura $(SL(r+1,q))$-opers as $q{\rm Op}_Z^{\Lambda}$. We can introduce the space
\begin{eqnarray}
{\rm Fun}\big(q{\rm Op}_Z^{\Lambda}\big)=\frac{\mathbb{C}(q, \xi_i, p_i, a_i)}{\rm Wr},
\end{eqnarray}
where ${\rm Wr}$ stands for the relation (\ref{Wrel}). 

Using Theorem \ref{Th:XXZBethecorr} we have the following statement.

\begin{Prop}
There is an isomorphism of algebras
\begin{equation}
{\rm Fun}\big(q{\rm Op}_Z^{\Lambda}\big)=\frac{\mathbb{C}(q, \xi_i, s_{k,l}, a_i)}{\rm Bethe},
\end{equation}
where {\rm Bethe} stands for the relations (\ref{eq:bethe}), specialized to $q{\rm Op}_Z^{\Lambda}$.
\end{Prop}

Let us reformulate this algebra in a more interesting way. 
Let $T$ be the tRS Lax matrix 
\begin{eqnarray}
T_{ij}=  \frac{\prod\limits_{m \neq j}^{r+1} \left(q^{-1} \xi_i \,  -  \xi_m \,  \right) }{\prod\limits_{l\neq j}^{r+1}(\xi_j-\xi_l)}   p_i\,, \qquad i,j=1,\dots,r+1\,.
\end{eqnarray}
Let the tRS Hamiltonians $ H^{tRS}_k$ be the coefficients of its characteristic polynomial 
\begin{equation}\label{eq:tRSW3}
\textrm{det}\Big(z - T \Big)=\sum_k H^{tRS}_k(q, \{\xi_i\}, \{p_i\})z^k
\end{equation}
Then we can prove the following

\begin{Thm}[\cite{KSZ}]\label{Th:XXz/tRS}
There is an isomorphism of algebras 
\begin{equation}
{\rm Fun}\big(q{\rm Op}_Z^{\Lambda}\big)\cong \frac{\mathbb{C}(q, \xi_i, p_i, a_i)}
{\left( H^{tRS}_k=e_k(a_1, \dots, a_{r+1})\right)_{k=1,\dots, r+1}},
\end{equation}
where $a_1, \dots, a_{r+1}$ are roots of polynomial $\Lambda(z)$ and $e_k$ are elementary symmetric functions of their variables.
\end{Thm}

\begin{proof}
Using Theorem \ref{qWthKSZ} we can put $j=r+1$ in \eqref{eq:QPolyM}
\begin{equation}
P(z)= \frac{\text{det}\Big( M_{1,\ldots,r+1} \Big)}{\text{det}\Big( V_{1,\ldots,r+1} \Big)}\,.
\end{equation}
Notice that
\begin{equation}
M_{1,\ldots,r+1}(z) = V_{1,\ldots,r+1} \cdot z + M_{1,\ldots,r+1}(0)\,.
\end{equation}
We can now simplify the formulae by inverting Vandermonde matrix $V_{1,\ldots,r+1}$ as follows
\begin{equation}
P(z)=\text{det}\left(z-T\right)\,, 
\label{eq:BaxterFlavorPol}
\end{equation}
where 
\begin{equation}
T=-M_{1,\ldots,r+1}(0)\cdot\Big( V_{1,\ldots,r+1} \Big)^{-1}\,.
\label{eq:tRSLaxMatrix}
\end{equation}
A straightforward computation leads us to the desired result. Indeed, the inverse of the Vandermonde matrix reads
\begin{equation}
(V_{1,\dots, n}^{-1})_{t,j} = (-1)^{t+j}q^{-t+1}\frac{S_{n-t,j}(\xi_1,\dots,\xi_{r+1})}{\prod\limits_{l\neq j}^{r+1}(\xi_j-\xi_l)}\,,
\end{equation}
where 
$$
S_{k,j}(\xi_1,\dots,\xi_{r+1})= e_k(\xi_1,\dots,\xi_{j-1},\xi_{j+1},\xi_{r+1})\,,
$$
and 
$$
e_k(\xi_1,\dots,\xi_{r+1})=\sum\limits_{1\leq i_1\leq \dots \leq i_k \leq n}^{r+1} \xi_{i_1}\cdots\xi_{i_k}\,,
$$
are the elementary symmetric polynomials.
Then we have 
$$
\left(-M'_{1,\ldots,r+1}(0)\right)_{ij} = \xi_i^{j-1}p_i\,.
$$
Thus, according to \eqref{eq:tRSLaxMatrix}
\begin{align}
T_{ij} &= \sum_{t=1}^{r+1}\xi_i^{t-1}p_i\cdot (V_{1,\dots,r+1})_{tj}\cr
&= \sum_{t=1}^{r+1}  \frac{(-1)^{t+j}q^{-t+1}\xi_i^{t-1}S_{n-t,j}(\xi_1,\dots,\xi_{r+1})}{\prod\limits_{l\neq j}^{r+1}(\xi_j-\xi_l)} p_i =  \frac{\prod\limits_{m \neq j}^{r+1} \left(q^{-1} \xi_i \,  -  \xi_m \,  \right) }{\prod\limits_{l\neq j}^{r+1}(\xi_j-\xi_l)}   p_i\,,
\end{align}
which is the Lax matrix $\{T_{ij}\}$.
\end{proof}

The functions $H^{tRS}_k$ are known as the tRS Hamiltonians. 

\subsubsection{Quantum/Classical Duality}
Consider the tRS phase space with symplectic form $$\Omega=\sum^{r+1}_{i=1}\frac{dx^i}{x^i}\wedge \frac{dp_i}{p_i}.$$ 
The Hamiltonians $H_k(q, x^i, p_i)$ are known to be mutually commuting with respect to the Poisson bracket corresponding to $\Omega$.
Theorem \ref{Th:XXz/tRS} implies the following.

\begin{Cor}(Trigonometric q-difference Quantum/Classical duality)\label{tqqc}
We have the following isomorphisms:
\begin{eqnarray}
\frac{\mathbb{C}(q, \xi_i, s_{k,l}, a_i)}{\rm Bethe}\cong {\rm Fun}\big(q{\rm Op}_Z^{\Lambda}\big)\cong \frac{\mathbb{C}(q, \xi_i, p_i, a_i)}
{\{ H^{tRS}_k=e_k(a_1, \dots, a_{r+1})\}_{k=1,\dots, r+1}}\,.
\end{eqnarray}
The latter space is isomorphic to the space of functions on the intersection of $\mathscr{L}_1\cap \mathscr{L}_2$ of two Lagrangian subvarieties with respect to form $\Omega$.
\begin{eqnarray} 
\mathscr{L}_1=\{x^i=\xi_i\}_{i=1,\dots, r+1}, \quad \mathscr{L}_2=\{H^{tRS}_i=e_i(\{a_i\})\}_{i=1,\dots, r+1}.
\end{eqnarray}
\end{Cor}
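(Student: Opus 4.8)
The two algebra isomorphisms displayed in the statement are already in hand: the left-hand isomorphism $\mathbb{C}(q,\xi_i,s_{k,l},a_i)/\mathrm{Bethe}\cong \mathrm{Fun}(q\mathrm{Op}_Z^{\Lambda})$ is the Proposition established just before Theorem \ref{Th:XXz/tRS} (via the Bethe correspondence of Theorem \ref{Th:XXZBethecorr}), while the right-hand isomorphism $\mathrm{Fun}(q\mathrm{Op}_Z^{\Lambda})\cong \mathbb{C}(q,\xi_i,p_i,a_i)/\{H^{tRS}_k=e_k(a)\}$ is exactly Theorem \ref{Th:XXz/tRS}. Composing them gives the full chain, so the only genuinely new content to prove is the symplectic-geometric reading of the rightmost quotient as functions on $\mathscr{L}_1\cap\mathscr{L}_2$.

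The plan is to realize that quotient as the coordinate ring of $\mathscr{L}_1\cap\mathscr{L}_2$ inside the tRS phase space $(\mathbb{C}^\times)^{2(r+1)}$ with coordinates $(x^i,p_i)$ and form $\Omega$. First I would observe that $\mathscr{L}_1=\{x^i=\xi_i\}$ is cut out by freezing all position coordinates, so the momenta $p_i$ furnish global coordinates on it; the general tRS Hamiltonians $H^{tRS}_k(q,x^i,p_i)$ restrict along $\mathscr{L}_1$ precisely to the functions $H^{tRS}_k(q,\xi_i,p_i)$ read off from $\det(z-T)$ in \eqref{eq:tRSW3}, with $T$ the Lax matrix of Theorem \ref{Th:XXz/tRS}. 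Imposing in addition the equations $H^{tRS}_k=e_k(a)$ is by definition the intersection with $\mathscr{L}_2$, whence the coordinate ring of $\mathscr{L}_1\cap\mathscr{L}_2$ is $\mathbb{C}[p_i]$ modulo the relations $H^{tRS}_k(q,\xi_i,p_i)=e_k(a)$, which is the quotient in the statement.

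It then remains to check that $\mathscr{L}_1$ and $\mathscr{L}_2$ are Lagrangian. For $\mathscr{L}_1$ this is immediate: each $x^i$ is constant there, so $dx^i=0$ and $\Omega|_{\mathscr{L}_1}=0$; since $\dim\mathscr{L}_1=r+1=\tfrac12\dim(\mathbb{C}^\times)^{2(r+1)}$, isotropy of maximal dimension forces the Lagrangian property. For $\mathscr{L}_2$ I would use the involutivity of the tRS integrals recorded above: because the $H^{tRS}_k$ mutually Poisson-commute for $\Omega$, their Hamiltonian vector fields are tangent to the common level set and span its symplectic orthocomplement, so $\mathscr{L}_2$ is coisotropic; with the functional independence of the $r+1$ integrals this fixes $\dim\mathscr{L}_2=r+1$ and upgrades coisotropy to the Lagrangian property.

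The main obstacle is exactly this Lagrangian claim for $\mathscr{L}_2$: involutivity by itself only yields that the level set is isotropic, so the essential analytic input is the functional independence of $H^{tRS}_1,\dots,H^{tRS}_{r+1}$, i.e. the Liouville integrability of the tRS model. I would establish it on the dense locus where the Jacobian of $(H^{tRS}_1,\dots,H^{tRS}_{r+1})$ in the variables $p_i$ has full rank; this is precisely where the nondegeneracy hypotheses defining $q\mathrm{Op}_Z^{\Lambda}$ enter, guaranteeing that $\mathscr{L}_1\cap\mathscr{L}_2$ is transverse and zero-dimensional, and hence that the coordinate ring is a finite-dimensional algebra of the expected rank matching the count of Bethe solutions.
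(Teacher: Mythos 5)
Your proposal is correct and follows essentially the same route as the paper, which derives the corollary simply by composing the preceding Proposition (the Bethe-side isomorphism via Theorem \ref{Th:XXZBethecorr}) with Theorem \ref{Th:XXz/tRS}, taking the mutual Poisson-commutativity of the $H^{tRS}_k$ as known. Your explicit verification that $\mathscr{L}_1$ and $\mathscr{L}_2$ are Lagrangian (isotropy of $\mathscr{L}_1$ plus involutivity and functional independence of the $r+1$ integrals for $\mathscr{L}_2$) is a standard supplement that the paper leaves implicit, and it is sound, up to the minor wording slip that involutivity at points of independence yields coisotropy of the level set rather than isotropy, with the Lagrangian property then following from the dimension count you give.
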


Thus there are two equivalent descriptions of the space of trigonometrically $Z$-twisted q-opers -- in terms of quantum XXZ Bethe Ansatz equations and in terms of energy relations of the classical trigonometric Ruijsenaars-Schneider model.

\subsection{$(SL(r+1),\epsilon)$-opers and tCM Model}
So far we considered $(SL(r+1),q)$-opers. Let us now modify our constructions to $(SL(r+1),\epsilon)$-case.
The general structure and definitions are very similar: we will just give the definitions and propositions which are different.
We start from the correspondence between $(G,\epsilon)$-opers and the additive analogue of a $QQ$-system which we now refer to as the $qQ$-system.
\begin{Thm}[\cite{KSZ}]\label{qWthKSZad}
Polynomials $\{\mathcal{V}_k(z)\}_{k=1,\dots, r}$ provide the solution to the $qQ$-system 
\begin{equation}
 \xi_{i+1} Q^+_i(z+\epsilon) Q^{-}_i(z) -  \xi_{i} Q^+_i(z)Q^-_i(z+\epsilon) = (\xi_{i+1}-\xi_i)\Lambda_i(z)Q_{i-1}(z+\epsilon)Q_{i+1}(z)\, ,
\label{eq:dQQrelationsad}
\end{equation}
so that $Q^+_j(z)=\mathcal{V}_j(z)$ under certain nondegeneracy conditions.
The polynomials $Q^+_j, Q^-_j$ for $j=1,\dots,r+1$ can be presented using quantum Wronskians
\begin{equation}
Q^+_j(z)= \frac{1}{F_i(z)}\frac{\text{det}\Big( M_{1,\ldots,j} \Big)}{\text{det}\Big( V_{1,\ldots,j} \Big)}\,,
\qquad
Q^-_j(z)= \frac{1}{F_i(z)}\frac{\text{det}\Big( M_{1,\ldots,j-1,j+1} \Big)}{\text{det}\Big( V_{1,\ldots,j-1,j+1} \Big)}\,,
\label{eq:QPolyMM}
\end{equation}
where $F_i(z)=W_{r-i}(z+(i-r)\epsilon)$ and
\begin{equation}
M_{i_1,\ldots,i_j}(z) = \begin{bmatrix} \,  s_{i_1}(z) & \xi_{i_1} s_{i_1}(z+\epsilon) & \cdots & \xi_{i_1}^{j-1} s_{i_1}(z+\epsilon(j-1)) \\ \vdots & \vdots & \ddots & \vdots \\  s_{i_j}(z)  & \xi_{i_j} s_{i_j}(z+\epsilon) & \cdots & \xi_{i_j}^{j-1} s_{i_j}(z+\epsilon(j-1)) \, \end{bmatrix}\,,
\quad
V_{i_1,\ldots,i_j} =\begin{bmatrix} \,  1 & \xi_{i_1}  & \cdots & \xi_{i_1}^{j-1} \\ \vdots & \vdots & \ddots & \vdots \\  1 & \xi_{i_j} & \cdots & \xi_{i_j}^{j-1}\, \end{bmatrix}\,.
\label{eq:MM0Indad}
\end{equation}
\end{Thm}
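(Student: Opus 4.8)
The plan is to run the argument of Theorem~\ref{qWthKSZ} essentially verbatim, replacing the multiplicative shift $M_q\colon z\mapsto qz$ by the additive shift $M_\epsilon\colon z\mapsto z+\epsilon$ throughout. There are two things to prove: the identification $Q^+_j=\mathcal{V}_j$, and the bilinear QQ-relation \eqref{eq:dQQrelationsad}. For the first, I would start from the additive Miura oper condition, that is, the $M_\epsilon$-analogue of \eqref{eq:MiuraQOperCond}--\eqref{eq:MiuraDetForm} in which every shift $M_q^{m}z$ is replaced by $z+m\epsilon$ and the Vandermonde in \eqref{eq:MM0Indad} carries no powers of $q$. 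This condition says that $\det M_{1,\ldots,j}(z)$ equals $\mathcal{V}_j(z)$ times a product of $W$-Wronskian factors; dividing by $\det V_{1,\ldots,j}$ and by $F_j(z)=W_{r-j}(z+(j-r)\epsilon)$ cancels exactly those factors and leaves $Q^+_j=\mathcal{V}_j$, under the nondegeneracy conditions guaranteeing that the relevant determinants are nonvanishing. The same computation with the row set $\{1,\ldots,j-1,j+1\}$ identifies $Q^-_j$ with $M_{1,\ldots,j-1,j+1}$.

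For the second and main step, I substitute \eqref{eq:QPolyMM} into \eqref{eq:dQQrelationsad} and clear the denominators $F$ and $\det V$. The claim then reduces to a three-term bilinear relation purely among the generalized Wronskians (Casoratians)
\[
M_{1,\ldots,i+1}(z),\quad M_{1,\ldots,i}(z),\quad M_{1,\ldots,i-1,i+1}(z),\quad M_{1,\ldots,i-1}(z),
\]
whose sizes $i+1,i,i,i-1$ are exactly the Desnanot--Jacobi signature. I would obtain it as the corresponding Plücker (Hirota--Casoratian) identity for the matrix in \eqref{eq:MM0Indad}. The mechanism that fits the additive shifts is elementary: in any column of $M_{i_1,\ldots,i_j}$ the entry $\xi_a^{m}s_a(z+m\epsilon)$ factors as $\xi_a\cdot\xi_a^{m-1}s_a\bigl((z+\epsilon)+(m-1)\epsilon\bigr)$, so deleting the first column and pulling out the row factors $\prod_a\xi_a$ turns a minor into a smaller minor with argument advanced to $z+\epsilon$. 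This is the additive counterpart of the column rescaling used in Theorem~\ref{qWthKSZ}, and it is precisely what produces the two arguments $z$ and $z+\epsilon$ on the left of \eqref{eq:dQQrelationsad}.

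It remains to match coefficients. The factor $\xi_{i+1}-\xi_i$ comes from the ratio of Vandermonde determinants left over after the $\xi$-weights are extracted in the condensation step, and the polynomial $\Lambda_i(z)$ appears through the regular-singularity factorization \eqref{eq:WPDefs}: the $W$-factors hidden in the $F$-normalizations telescope, and after the index shift $j\mapsto r-j$ the surviving ratio of the $P$'s collapses to the single factor $\Lambda_i(z)$. With these identifications the relation \eqref{eq:dQQrelationsad} becomes an identity of polynomials, and $Q^+_j=\mathcal{V}_j$ then furnishes the asserted solution.

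The step I expect to be most delicate is exactly this coefficient bookkeeping: one must track the additive argument shifts, the diagonal weights $\xi_a$, and the telescoping of the $W$-Wronskians simultaneously, so that all four minors land at the arguments and with the prefactors demanded by \eqref{eq:dQQrelationsad}. In compensation, the additive case is genuinely cleaner than the multiplicative one, since the Vandermonde matrices in \eqref{eq:MM0Indad} carry no powers of $q$ (cf.\ the footnote to Theorem~\ref{qWthKSZ}); the only real novelty relative to the $q$-case is the replacement of geometric by arithmetic progressions in the shifted arguments.
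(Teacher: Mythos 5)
Your proposal is correct and follows essentially the same route as the paper and its reference \cite{KSZ}: a Desnanot--Jacobi condensation on the quantum Wronskian $M_{1,\ldots,i+1}(z)$, with the column-deletion/$\xi$-extraction step generating the shift $z\mapsto z+\epsilon$, the Vandermonde ratio producing $(\xi_{i+1}-\xi_i)$, and the regular-singularity factorization \eqref{eq:WPDefs} yielding $\Lambda_i(z)$ --- this is exactly the additive avatar of the argument for Theorem~\ref{qWthKSZ}, and the same bilinear-identity mechanism the paper uses in the differential case (Theorem~\ref{Wth} via Lemma~\ref{Th:LemmaLCid}).
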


Let us then discuss the analog of Bethe Ansatz equations in this case. First, let us discuss the nondegeneracy relations on the roots of $Q_j^+(z)$. First of all, let us call 
$u,v\in \mathbb{P}^1$ to be $\epsilon$-distinct if 
$u+\mathbb{Z}\epsilon\cap v+\mathbb{Z}\epsilon=\varnothing$. Then if 
zeroes of $Q^+_i(z)$, $Q^+_{i\pm 1}$ are $\epsilon$-distinct from each other and if 
$\xi\neq \xi_{i+1}$, we call such $qQ$-system and the corresponding $Z$-twisted Miura $(SL(r+1),\epsilon)$-oper {\it nondegenerate}.

Then we have the following theorem.

\begin{Thm}[\cite{Frenkel:2020}]
\label{Th:XXZBethecorradd}
The solutions of the nondegenerate $SL(r+1)$ $qQ$-system \eqref{eq:dQQrelationsad} are in one-to-one correspondence with the solutions to the following algebraic equations between the roots of $\{Q_i^+(z)\}_{i=1,\dots, r}$ known as the Bethe Ansatz equations for $\mathfrak{sl}(n+1)$ XXX spin chain:
\begin{equation}    \label{eq:betheadd}
\frac{Q^+_{i}(s_{i,k}+\epsilon)}{Q^+_{i}(s_{i,k}-\epsilon)} \frac{\xi_i}{\xi_{i+1}}=
- \frac{\Lambda_i(s_{i,k}) \cdot Q^{+}_{i+1}(s_{i,k}+\epsilon)\cdot Q^{+}_{i-1}(s_{i,k})}{\Lambda_i(s_{i,k}-\epsilon)\cdot Q^{+}_{i+1}(s_{i,k})\cdot Q^{+}_{i-1}(s_{i,k}-\epsilon)},
\end{equation}
where $i=1,\ldots,r; k=1,\ldots,r+1_i$ and $\Lambda_j(z)=\prod_{c=1}^{M_j}(z-a_{j,c})$ and $Q_j^+(z)=\prod_{c=1}^{N_j}(z-s_{j,c})$.
\end{Thm}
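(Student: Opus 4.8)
The plan is to prove the two directions of the bijection separately. Since the additive QQ-system \eqref{eq:dQQrelationsad} and the Bethe system \eqref{eq:betheadd} are obtained from their multiplicative counterparts in Theorem \ref{Th:XXZBethecorr} by replacing the shift $M_q\colon z\mapsto qz$ with $M_\epsilon\colon z\mapsto z+\epsilon$ and the $q$-distinctness hypothesis with its $\epsilon$-analogue, the most economical route is to transport the proof of Theorem \ref{Th:XXZBethecorr} under this substitution. I would nevertheless give the argument directly, writing $Q_i^+(z)=\prod_{k=1}^{N_i}(z-s_{i,k})$ and recalling that nondegeneracy forces the roots of each $Q_i^+$ to be simple and the families $\{s_{i,k}\}$, $\{s_{i\pm1,k}\}$ to be $\epsilon$-distinct, so that no two of the arguments $s_{i,k}$, $s_{i,k}\pm\epsilon$ ever collide.

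The direction from the QQ-system to the Bethe equations is elementary. Fix $i$ and a root $s_{i,k}$ of $Q_i^+$. Evaluating \eqref{eq:dQQrelationsad} at $z=s_{i,k}$ kills the term containing $Q_i^+(z)$, while evaluating it at $z=s_{i,k}-\epsilon$ kills the term containing $Q_i^+(z+\epsilon)$; each of the two surviving identities is proportional to the single number $Q_i^-(s_{i,k})$. Dividing one by the other cancels $Q_i^-(s_{i,k})$ and the factor $(\xi_{i+1}-\xi_i)$ --- both nonzero precisely by the $\epsilon$-distinctness and $\xi_i\neq\xi_{i+1}$ hypotheses --- and the remaining ratio is exactly the Bethe equation \eqref{eq:betheadd}.

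For the converse I regard \eqref{eq:dQQrelationsad}, with all the $Q_j^+$ and $\Lambda_i$ fixed, as a first-order inhomogeneous linear $\epsilon$-difference equation for the single unknown $Q_i^-$, which upon dividing by $Q_i^+(z)Q_i^+(z+\epsilon)$ becomes $\xi_{i+1}g_i(z)-\xi_i g_i(z+\epsilon)=h_i(z)$ for $g_i=Q_i^-/Q_i^+$ and $h_i$ the resulting rational right-hand side. I would then decompose $g_i$ into a polynomial part plus simple partial fractions at the (simple, by nondegeneracy) poles $s_{i,k}$. The polynomial part is uniquely determined because the operator $P(z)\mapsto\xi_{i+1}P(z)-\xi_i P(z+\epsilon)$ is triangular with diagonal $\xi_{i+1}-\xi_i\neq0$ in the monomial basis, hence invertible on polynomials. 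The principal part is governed by the residues of $h_i$ at $s_{i,k}$ and at $s_{i,k}-\epsilon$; because $\epsilon$-distinctness keeps the poles of $g_i(z)$ and of $g_i(z+\epsilon)$ from colliding, each residue of $g_i$ is prescribed twice, and the two prescriptions agree if and only if \eqref{eq:betheadd} holds. Granting the Bethe equations, $g_i$ is therefore well defined, and $Q_i^-=g_i\,Q_i^+$ is an honest polynomial solving \eqref{eq:dQQrelationsad}; doing this for each $i$ reconstructs the full QQ-system solution.

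The main obstacle is this converse direction: one must verify that the vanishing of the residue obstruction is not merely necessary but sufficient to assemble a global polynomial $Q_i^-$, which requires controlling the behaviour at infinity (solvability of the polynomial part, where $\xi_i\neq\xi_{i+1}$ is indispensable) together with the degree bookkeeping $\deg Q_i^-=\deg\Lambda_i+N_{i-1}+N_{i+1}-N_i$. The nondegeneracy hypotheses do the heavy lifting here: simplicity of the roots of $Q_i^+$ makes every pole of $h_i$ simple, and $\epsilon$-distinctness guarantees that the pole sets $\{s_{i,k}\}$ and $\{s_{i,k}-\epsilon\}$ are disjoint, so that the residue conditions decouple into exactly one Bethe equation per root. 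With these in place the construction is reversible, yielding the claimed one-to-one correspondence.
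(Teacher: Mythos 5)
The paper states this theorem without proof, importing it from \cite{Frenkel:2020}, and your two-step argument --- evaluating the QQ-relation at $z=s_{i,k}$ and at $z=s_{i,k}-\epsilon$ and dividing to eliminate $Q_i^-(s_{i,k})$, then conversely reconstructing $Q_i^-=g_iQ_i^+$ by splitting $g_i$ into a polynomial part (solvable because $P(z)\mapsto\xi_{i+1}P(z)-\xi_iP(z+\epsilon)$ is triangular with diagonal $\xi_{i+1}-\xi_i\neq 0$) plus simple principal parts whose residue-matching obstruction at the $\epsilon$-disjoint pole pairs $s_{i,k}$, $s_{i,k}-\epsilon$ is exactly the Bethe system --- is precisely the standard proof from that reference, and it mirrors the residue mechanism the paper itself uses for the trigonometric Gaudin analogue in Theorem \ref{Th:QQtGaud}. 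Your proposal is therefore correct and takes essentially the same approach as the source proof; the only caveat is cosmetic: what the evaluation actually produces from \eqref{eq:dQQrelationsad} is \eqref{eq:betheadd} with the ratio $\xi_{i+1}/\xi_i$ and the $\epsilon$-shift on $Q^+_{i-1}$ rather than on $Q^+_{i+1}$, a mismatch traceable to index typos in the paper's displayed equations rather than to any gap in your argument.
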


We define the space $\epsilon {\rm Op}^{\Lambda}_Z$ following Definition \ref{canqop}, replacing $M_q$ by $M_{\epsilon}$ so that 
\begin{eqnarray}
{\rm Fun } (\epsilon {\rm Op}^{\Lambda}_Z)=\frac{\mathbb{C}(\epsilon, p_i, \xi_i, a_i)}{\rm Wr}
\end{eqnarray}
where the relations ${\rm Wr}$ are as follows: 
\begin{equation}
  \underset{i,j}{\det} \left[\xi_i^{j-1}s_{r+1-k+i}(z+\epsilon (j-1))\right] = \Lambda(z)\,,
\label{eq:MiuraDetFormxxx}
\end{equation}
where $i,j=1, \dots, r+1$ and $s_i(z)=z-p_i$.

\begin{Thm}
There is a one-to-one correspondence between solutions of the $qQ$-system (\ref{eq:dQQrelationsad}) and $Z$-twisted Miura $(SL(r+1),\epsilon)$-opers with regular singularities.
\end{Thm}

Difference operators corresponding to Miura $(SL(r+1),\epsilon)$-opers in the absence of twist variables appeared in \cite{Mukhin_2005} and they were referred to as discrete Miura opers. Also, see \cite{MR2409414} (Theorem 6.2) where a similar statement was proven where opers were treated as scalar difference operators.

\vskip.1in

Now let us find the connection between $\epsilon{\rm Op}^Z_{\Lambda}$ with the many-body integrable model:  the trigonometric Calogero-Moser system. 

\begin{Thm}\label{Th:XXX/tCM}
There is an isomorphism of algebras 
\begin{equation}
{\rm Fun}\big(\epsilon{\rm Op}_Z^{\Lambda}\big)\cong \frac{\mathbb{C}(\epsilon, \xi_i, p_i, a_i)}
{( H^{tCM}_k=e_k(a_1, \dots, a_{r+1}))_{k=1,\dots, r+1}},
\end{equation}
where 
\begin{equation}\label{eq:tRSW}
\text{det}\Big(z - m \Big)=\sum_k H^{tCM}_k(\epsilon, \{\xi_i\}, \{p_i\})z^k
\end{equation}
such that  
$\Lambda(z)=\prod_{i=1}^{r+1}(z-a_i)=\sum_kz^ke_k(a_1, \dots, a_{r+1})$ 
 and $m=\{m_{ij}\}_{i,j=1,\dots, r+1}$ is the tCM Lax matrix:
\begin{eqnarray}\label{eq:tCMLaxm}
&& m_{ii}=p_i - \epsilon\xi_i\sum_{k\neq i}\frac{1}{\xi_i-\xi_k}, \quad i=1,\dots, r+1,\nonumber\\
&& m_{ij}=\frac{\epsilon\xi_i}{\xi_i-\xi_j}\frac{\prod\limits_{k\neq i}(\xi_i-\xi_k)}{\prod\limits_{k\neq j}(\xi_j-\xi_k)}\,\quad i,j=1,\dots, r+1, ~i\neq j.
\end{eqnarray}

\end{Thm}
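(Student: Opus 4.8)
The plan is to run the argument of Theorem~\ref{Th:XXz/tRS} verbatim, feeding in the \emph{additive} Wronskian and Vandermonde data of Theorem~\ref{qWthKSZad} in place of the multiplicative ones. First I would set $j=r+1$ in \eqref{eq:QPolyMM}, where $\mathcal{V}_{r+1}=1$, to obtain the polynomial
\begin{equation}
P(z)=\frac{\det\big(M_{1,\dots,r+1}(z)\big)}{\det\big(V_{1,\dots,r+1}\big)},
\end{equation}
with $M_{1,\dots,r+1}$, $V_{1,\dots,r+1}$ from \eqref{eq:MM0Indad} and $s_i(z)=z-p_i$. Because each $s_i$ is monic of degree one, $M_{1,\dots,r+1}(z)$ is affine in $z$,
\begin{equation}
M_{1,\dots,r+1}(z)=V_{1,\dots,r+1}\,z+M_{1,\dots,r+1}(0),\qquad \big(M_{1,\dots,r+1}(0)\big)_{ij}=\xi_i^{\,j-1}\big(\epsilon(j-1)-p_i\big),
\end{equation}
so that $P(z)=\det(z-m)$ with $m=-M_{1,\dots,r+1}(0)\,V_{1,\dots,r+1}^{-1}$, exactly as in \eqref{eq:tRSLaxMatrix}. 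The defining relation \eqref{eq:MiuraDetFormxxx} of ${\rm Fun}\big(\epsilon{\rm Op}_Z^{\Lambda}\big)$ amounts to $P(z)=\Lambda(z)$ (up to the standard normalization identifying the Wronskian determinant with $\Lambda$); comparing coefficients of $z^k$ then turns {\rm Wr} into the relations $H^{tCM}_k=e_k(a_1,\dots,a_{r+1})$ and realizes the desired isomorphism as the identity on the generators. Everything therefore reduces to computing the characteristic polynomial of $m$.

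The one genuinely new feature is that the additive shifts split the constant term as $\big(M_{1,\dots,r+1}(0)\big)_{ij}=-p_i\,\xi_i^{\,j-1}+\epsilon\,(j-1)\xi_i^{\,j-1}$. The first summand equals $-\mathrm{diag}(p_i)\,V_{1,\dots,r+1}$ and contributes the bare momenta $\mathrm{diag}(p_i)$ to $m$. The second summand is the image of the Vandermonde under the Euler operator $\xi_i\partial_{\xi_i}$ applied columnwise, and this is precisely what degenerates the Ruijsenaars Lax matrix to a Calogero--Moser one. Concretely, let $\ell_j(x)=\sum_t x^{t-1}\big(V_{1,\dots,r+1}^{-1}\big)_{tj}=\prod_{l\neq j}\frac{x-\xi_l}{\xi_j-\xi_l}$ be the Lagrange interpolation polynomial (so $\ell_j(\xi_i)=\delta_{ij}$). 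Denoting by $E$ the matrix $E_{ij}=\epsilon(j-1)\xi_i^{\,j-1}$, one gets
\begin{equation}
\big(E\,V_{1,\dots,r+1}^{-1}\big)_{ij}=\epsilon\sum_t (t-1)\xi_i^{\,t-1}\big(V_{1,\dots,r+1}^{-1}\big)_{tj}=\epsilon\,\xi_i\,\ell_j'(\xi_i),
\end{equation}
so $m=\mathrm{diag}(p_i)-E\,V_{1,\dots,r+1}^{-1}$ has entries $m_{ij}=p_i\delta_{ij}-\epsilon\,\xi_i\,\ell_j'(\xi_i)$. Evaluating the derivative at the nodes, $\ell_j'(\xi_j)=\sum_{k\neq j}(\xi_j-\xi_k)^{-1}$ and $\ell_j'(\xi_i)=\prod_{l\neq i,j}(\xi_i-\xi_l)\big/\prod_{l\neq j}(\xi_j-\xi_l)$ for $i\neq j$, which is the content of \eqref{eq:tCMLaxm} up to the sign of the off-diagonal entries.

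The only subtlety I anticipate is exactly this off-diagonal sign: the matrix $-M(0)V^{-1}$ produced above and the stated Lax matrix \eqref{eq:tCMLaxm} differ by flipping the signs of all off-diagonal entries, which is not manifestly a similarity for $r+1\ge 3$. I would close the argument by exhibiting an explicit conjugation. Conjugating either matrix by $\mathrm{diag}\big(\prod_{l\neq i}(\xi_i-\xi_l)^{-1}\big)$ normalizes its off-diagonal entries to $\mp\,\epsilon\xi_i/(\xi_i-\xi_j)$ while leaving the diagonal fixed; the two normalized matrices are then related by transposition followed by conjugation with $\mathrm{diag}(\xi_i)$. Since transposition and diagonal conjugation both preserve the characteristic polynomial, $m$ and \eqref{eq:tCMLaxm} have the same $\det(z-m)=P(z)$, which is all the theorem needs. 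Thus the hard part is not the Lax computation---the additive shadow of the tRS case---but verifying that the natural matrix $-M(0)V^{-1}$ and the standard tCM Lax matrix are spectrally equivalent; the remaining symmetric-function bookkeeping is identical to that in Theorem~\ref{Th:XXz/tRS}.
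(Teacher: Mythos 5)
Your proposal is correct and follows the same route as the paper's proof: set $j=r+1$ in \eqref{eq:QPolyMM}, use the affine decomposition $M_{1,\dots,r+1}(z)=V_{1,\dots,r+1}\,z+M_{1,\dots,r+1}(0)$, identify $m=-M_{1,\dots,r+1}(0)\,V_{1,\dots,r+1}^{-1}$ as in \eqref{eq:tRSLaxMatrix}, and reduce the oper relation \eqref{eq:MiuraDetFormxxx} to $H^{tCM}_k=e_k(a_1,\dots,a_{r+1})$ by comparing characteristic-polynomial coefficients. Where the paper expands the inverse Vandermonde in signed elementary symmetric polynomials $S_{k,j}$ and evaluates the resulting sum by differentiating the identity $\prod_{i\neq j}(u-\xi_i)=\sum_k(-1)^{r+1-k}u^k S_{r+1-k,j}$ at $u=\xi_i$, you package exactly the same computation as $\bigl(E\,V^{-1}\bigr)_{ij}=\epsilon\,\xi_i\,\ell_j'(\xi_i)$ via Lagrange interpolation polynomials --- a cleaner but equivalent bookkeeping. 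The one place where you genuinely go beyond the paper is the off-diagonal sign: you are right that $-M(0)V^{-1}$ has off-diagonal entries equal to \emph{minus} those displayed in \eqref{eq:tCMLaxm} (already visible at $r+1=2$, where direct computation gives $m_{12}=\epsilon\xi_1/(\xi_1-\xi_2)$ while \eqref{eq:tCMLaxm} gives the negative), a discrepancy the paper's proof silently absorbs --- its derivation asserts the match with \eqref{eq:tCMLaxm} without addressing the sign. Your repair is valid: after conjugating by $\mathrm{diag}\bigl(\prod_{l\neq i}(\xi_i-\xi_l)^{-1}\bigr)$, the two normalized matrices are exchanged by transposition followed by conjugation with $\mathrm{diag}(\xi_i)$, both of which preserve $\det(z-m)$; since only the spectral data (the Hamiltonians $H_k^{tCM}$) enter the isomorphism, this suffices, and in fact it is the justification the paper's own argument is missing.
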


\begin{proof}
Using Theorem \ref{qWthKSZ} we can put $j=r+1$ and write
\begin{equation}
P(z)= \frac{\text{det}\Big( M_{1,\ldots,r+1} \Big)(z)}{\text{det}\Big( V_{1,\ldots,r+1} \Big)}\,,
\end{equation}
where the Vandermonde matrix reads $\left(V_{1,\ldots,r+1}\right)_{ij}=\xi_i^{j-1}$. Denote $q_i(z)=z-p_i$ then 
$\left(M_{1,\ldots,r+1}\right)_{i,j}=\xi_i^{j-1}(z-p_i+(j-1)\epsilon)$, and we can rewrite the above equation as
\begin{equation}\label{eq:LaxCalc}
P(z)= \text{det}\Big(z + M_{1,\dots,r+1}(0)V^{-1}_{1,\dots, r+1}\Big)\,.
\end{equation}
A short calculation shows that 
\begin{equation}
-M_{1,\dots,r+1}(0)V^{-1}_{1,\dots, r+1}= m\,,
\end{equation}
is precisely the Lax matrix of the trigonometric Calogero-Moser (tCM) system \eqref{eq:tCMLaxm}.
Indeed, $-M_{1,\dots,r+1}(0)_{ij}=\xi_i^{j-1} p_i-(j-1)\xi_i^{j-1}\epsilon$. The inverse of the Vandermonde matrix \eqref{eq:MM0Indad} reads
\begin{equation}
(V_{1,\dots, r+1}^{-1})_{tj} = (-1)^{t+j}\frac{S_{r+1-t,j}(\xi_1,\dots,\xi_{r+1})}{\prod\limits_{l\neq j}^{r+1}(\xi_j-\xi_l)}\,,
\end{equation}
Then 
\begin{equation}
m_{ij}=p_i\delta_{ij}-\epsilon\displaystyle\sum_{t=1}^{r+1} \frac{(-1)^{t+j} (t-1)\xi_i^{t-1}S_{r+1-t,j}(\xi_1,\dots,\xi_{r+1})}{\prod\limits_{l\neq j}^{r+1}(\xi_j-\xi_l)}
\end{equation}
In order to understand the sum in the numerator of the second term above consider the following identity
\begin{equation}
\prod_{i=1}^{r+1}(u-\xi_i) = \sum_{k=0}^{r+1} (-1)^{r+1-k} u^{k} e_{r+1-k}(\xi_1,\dots,\xi_{r+1})\,,
\end{equation}
also, if we decide to remove $\xi_j$ from the set of variables, this identity holds
\begin{equation}
\prod_{\substack{i=1\\ i\neq j}}^{r+1}(u-\xi_i) = \sum_{k=0}^{r+1} (-1)^{r+1-k} u^{k} S_{r+1-k,j}(\xi_1,\dots,\xi_{r+1})\,.
\end{equation}
Now let us differentiate both parts of the above equation with respect to $u$
\begin{align}
\sum_{l=1}^{r+1}\prod_{\substack{m=1\\ m\neq l,j}}^{r+1}(u-\xi_m) &= \sum_{k=0}^{r} (-1)^{r+1-k} k \, u^{k-1} S_{r+1-k,j}(\xi_1,\dots,\xi_{r+1})\cr
&=u^{-1}\sum_{k=1}^{r} (-1)^{r-k} (k-1) \, u^{k-1} S_{r+1-k,j}(\xi_1,\dots,\xi_{r+1})\,.
\end{align}
Then, after substituting $u=\xi_i$ we get
\begin{equation}
\sum_{k=1}^{r} (-1)^{r-k} (k-1) \, \xi_i^{k-1} S_{r+1-k,j}(\xi_1,\dots,\xi_{r+1})=\xi_i \sum_{l=1}^{r+1}\prod_{\substack{m=1\\ m\neq l,j}}^{r+1}(\xi_i-\xi_m)\,.
\end{equation}
Now if $i=j$ then the expression in the right hand side contains $r$ terms and reads $\xi_i \sum_{l=1}^{r+1}\prod_{\substack{m=1\\ m\neq l,i}}^{r+1}(\xi_i-\xi_m)$. Whereas, for $i\neq j$, only one term is nonzero $\xi_i \prod_{\substack{m=1\\ m\neq i,j}}^{r+1}(\xi_i-\xi_m)$.
Combining the results together we obtain \eqref{eq:tCMLaxm}.
\end{proof}

The functions $H_k$ are known as the $tCM$ Hamiltonians. 

Let us reformulate now the statement about the quantum/classical duality in this case.
The functions $H_k$ are known as the $tCM$ Hamiltonians. Consider the tRS phase space with symplectic form
$$
\Omega=\sum^{r+1}_{i=1}\frac{dx^i}{x^i}\wedge \frac{dp_i}{p_i}.
$$ $H^{tCM}_k(\epsilon, x^i, p_i)$ are known to be mutually commuting with respect to the Poisson bracket corresponding to $\Omega$.
Let us reformulate now the statement about the quantum/classical duality in this case, which is the analog of Corollary \ref{tqqc}. To do that let us combine Theorems \ref{Th:XXZBethecorr}, \ref{Th:XXX/tCM}.
\begin{Cor}(Trigonometric $\epsilon$-difference Quantum/Classical duality)\label{rteo}
We have the following isomorphisms:
\begin{eqnarray}
\frac{\mathbb{C}(\epsilon, \xi_i, s_{k,l}, a_i)}{\rm Bethe}\cong {\rm Fun}\big(\epsilon{\rm Op}_Z^{\Lambda}\big)\cong \frac{\mathbb{C}(\epsilon, \xi_i, p_i, a_i)}
{\{ H^{tCM}_k=e_k(a_1, \dots, a_{r+1})\}_{k=1,\dots, r+1}},
\end{eqnarray}
where ${\rm Bethe}$ stands for equations \eqref{eq:betheadd}. The latter space is isomorphic to the space of functions on the intersection $\mathscr{L}_1\cap \mathscr{L}_2$ of two Lagrangian subvarieties:
\begin{eqnarray} 
\mathscr{L}_1=\{x^i=\xi_i\}_{i=1,\dots, r+1}, \quad \mathscr{L}_2=\{H^{tCM}_i=e_i(\{a_i\})\}_{i=1,\dots, r+1}.
\end{eqnarray}
\end{Cor}

\section{$SL(r+1)$-Opers with Regular Singularities: Rational and Trigonometric Z-Twists}\label{Sec:DiffOpers}
Let us review the definition of $SL(r+1)$-opers (see \cite{KSZ}). 
In this paper, we give a complete local description, which was omitted in \cite{KSZ}.

\begin{Def} A \emph{$\GL(r+1)$-oper} on $\mathbb{P}^1$ is a triple
  $(E,\nabla,\cL_\bullet)$, where $E$ is a rank $r+1$ vector bundle on
  $\mathbb{P}^1$, $\nabla:E\to E\otimes K$ is a connection, and
  $\cL_\bullet$ is a complete flag of subbundles such that $\nabla$
  maps $\cL_i$ into $\cL_{i+1}\otimes K$ and the induced maps
  $\bar{\nabla}_i:\cL_i/\cL_{i+1}\to \cL_{i-1}/\cL_{i}\otimes K$ are
  isomorphisms for $i=1,\dots,r$. The triple is called an
  \emph{$SL(r+1)$-oper} if the structure group of the flat
  $GL(r+1)$-bundle may be reduced to $SL(r+1)$.
\end{Def}

Similarly to the difference case one can describe it using a local section of $\mathcal{L}_{r+1}$
Indeed if $s$ is a section of 
$\cL_{r+1}$, for $i=1,\dots,r+1$, let us denote 
\begin{equation} W_i(s)(z)=\left.\left(s(z)\wedge\nabla_z
    s(z)\wedge\dots\wedge\nabla_z^{i-1} s(z)\right)\right|_{\Lambda^i\cL_{i}}
\end{equation}
for $i=2,\dots,r+1$. 
Then $(E,\nabla,\cL_\bullet)$ is an oper if and only if for each $z$,
there exists a local section of $\cL_{r+1}$ for which $W_i(s)(z)\ne 0$
for all $i$.  

The isomorphism condition needs to be relaxed to allow the oper to have regular singularities.
\begin{Def}
We say that an $SL(r+1)$-oper has regular singularities defined by the collection of polynomials $\{\Lambda_i(z)\}_{i=1,\dots, r}$ when $\bar{\nabla}_i$ is an isomorphism away from zeroes of $\Lambda_i(z)$ for $i=1,\dots, r$.  
\end{Def}
Thus we have 
\begin{eqnarray}
\label{eq:WPDefsdiff}
W_k(s(z))=P_1(z) \cdot P_2(z)\cdots P_{k}(z),  \quad 
P_i(z)=\Lambda_{r}(z)\Lambda_{r-1}(z)\cdots\Lambda_{r-i+1}(z)\,.
\end{eqnarray}  
Let us introduce the following notation for the roots of polynomials of $\Lambda_j$:
\begin{eqnarray}
\Lambda_j(z)=c_j\prod^{M}_{m}(\zeta-a_m)^{l_m^{j}}.
\end{eqnarray}
 where $l_m^{j}$ stands for the multiplicities of roots $a_m$.

\begin{Def}
A Miura oper with regular singularities is a quadruple $(E,\nabla,\cL_\bullet,\hcL_\bullet)$ where
$(E,\nabla,\cL_\bullet)$ is an oper with regular singularities and
$\hcL_\bullet$ is a complete flag of subbundles preserved by $\nabla$.
\end{Def}

Now we give two versions of $Z$-twisted condition.

\begin{Def}
We call $SL(r+1)$-oper {\it rationally $Z$-twisted}, if the corresponding connection is gauge equivalent to the constant diagonal element $Z\in\mathfrak{sl}({r+1})$:
\begin{eqnarray}
\nabla_z=v(z)(\partial_z+Z)v(z)^{-1}, 
\end{eqnarray}
where $v(z)\in\mathfrak{sl}(r+1)(z)$.
\end{Def}

Let us make the coordinate transformation $z=e^{w}$, where $w$ is identified with $w+2\pi i$, i.e. the coordinate on the infinite cylinder.  

\begin{Def}
We call $SL(r+1)$ oper with regular singularities {\it trigonometrically  $Z$-twisted}, if the corresponding connection is gauge equivalent to a simple polar term:
\begin{eqnarray}
\nabla_{z}=v(z)\Big(\partial_z+\frac{Z}{z}\Big)v(z)^{-1}, 
\end{eqnarray}
where $v(z)\in SL(r+1)(z)$ and $Z$ is a Cartan element of $SL(r+1)$.
\end{Def}

Note, that in the trigonometric case, the oper connection has a nontrivial monodromy equal to $e^{2\pi Z}$ around zero on $\mathbb{P}^1$. 
To see that one should pass to the coordinate on a cylinder: $z=e^{\zeta}$, then 
\begin{eqnarray}
\nabla_{\zeta}=v\big(e^\zeta\big)(\partial_\zeta+Z)v\big(e^\zeta\big)^{-1}, 
\end{eqnarray}
That justifies the name `trigonometric' since $Z$-twist is similar to a standard rational case, but we are working over the space of hyperbolic functions of $\zeta$.

In both cases the following theorem is true.

\begin{Prop}[\cite{Brinson:2021ww}]
There are exactly $(r+1)!$ Miura opers for a given $Z$-twisted $SL(r+1)$-oper if $Z$ is regular semisimple. 
\end{Prop}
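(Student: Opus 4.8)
The plan is to exploit the $Z$-twisted hypothesis to reduce the connection to its model form and then read off the $\nabla$-invariant flags directly from the eigenstructure of $Z$. Write the oper connection as $\nabla_z = v(z)(\partial_z + Z)v(z)^{-1}$ in the rational case and $\nabla_z = v(z)(\partial_z + Z/z)v(z)^{-1}$ in the trigonometric case, with $v(z)\in SL(r+1)(z)$. Applying the gauge transformation by $v(z)^{-1}$ identifies the complete flags $\hcL_\bullet$ preserved by $\nabla_z$ with the complete flags preserved by the model connection $\nabla^0=\partial_z+Z$ (resp.\ $\partial_z+Z/z$); since $v$ is an invertible rational gauge this is a bijection, so it suffices to count flags preserved by $\nabla^0$.

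Next I would determine the $\nabla^0$-invariant subbundles. A subbundle is preserved precisely when it is flat, hence spanned by horizontal sections, and the horizontal sections of $\nabla^0$ are the columns of $\Psi(z)=e^{-Zz}$ (resp.\ $z^{-Z}$). Thus a preserved subbundle has the shape $\Psi(z)\cdot V$ for a constant subspace $V\subseteq\C^{r+1}$. The key point is that for $\Psi(z)\cdot V$ to be an honest single-valued meromorphic subbundle of the trivial bundle on $\mathbb{P}^1$ the transcendental factor must not genuinely move $V$: writing $Z=\mathrm{diag}(\xi_1,\dots,\xi_{r+1})$ with the $\xi_i$ distinct (regular semisimplicity), $\Psi(z)$ acts on each eigenline by $e^{-\xi_i z}$ (resp.\ $z^{-\xi_i}$), and mixing two eigenlines introduces a transcendental nonconstant ratio $e^{(\xi_j-\xi_i)z}$. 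Hence $\Psi(z)\cdot V$ is algebraic exactly when $V$ is a sum of eigenlines of $Z$, i.e.\ when $V$ is $Z$-invariant; equivalently, one checks directly that $\nabla^0$ preserves a constant subbundle $V\otimes\cO$ if and only if $ZV\subseteq V$.

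It then remains to count complete flags all of whose steps are $Z$-invariant. Since $Z$ has distinct eigenvalues, its invariant subspaces are exactly the spans of subsets of the eigenlines $\ell_1,\dots,\ell_{r+1}$, so a complete $Z$-invariant flag is the same datum as a total ordering $\ell_{\sigma(1)}\subset \ell_{\sigma(1)}\oplus\ell_{\sigma(2)}\subset\cdots$ of these eigenlines. The set of such orderings is a torsor under $S_{r+1}$, giving exactly $|S_{r+1}|=(r+1)!$ preserved flags; transporting back through $v(z)$ yields the same number of Miura structures $\hcL_\bullet$, each a genuine Miura oper since the definition imposes no condition beyond $\nabla$-invariance.

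I expect the main obstacle to be the single-valuedness argument that excludes a continuum of preserved subbundles: over the open locus every constant $V$ yields a flat subbundle, and one must show that only $Z$-invariant $V$ extend compatibly across the singular points (at $\infty$ in the rational case; at $0$ and $\infty$, where the monodromy is $e^{2\pi i Z}$, in the trigonometric case). This is where regular semisimplicity of $Z$ is indispensable, and in the trigonometric case one additionally needs the non-resonance of the $\xi_i$ (so that the residue $Z$, equivalently the monodromy $e^{2\pi i Z}$, has no unwanted invariant subspaces) in order to identify the monodromy-invariant flags with the $Z$-invariant ones.
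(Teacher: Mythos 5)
Note first that the paper contains no proof of this proposition: it is imported from \cite{Brinson:2021ww} with only the adjacent remark that the rational-case proof given there ``is straightforward to generalize'' to the trigonometric twist. Your argument is correct and is essentially the standard one underlying that reference: use the $Z$-twisted condition to gauge the connection to its model form $\partial_z+Z$ (resp.\ $\partial_z+Z/z$), observe that a $\nabla$-preserved subbundle is horizontally generated and hence of the form $\Psi(z)\cdot V$ with $\Psi=e^{-Zz}$ (resp.\ $z^{-Z}$), show that single-valuedness/algebraicity forces $V$ to be $Z$-invariant, and conclude that the preserved complete flags are exactly the flags of eigenline spans, i.e.\ the $|S_{r+1}|=(r+1)!$ fixed points of a regular semisimple element acting on the flag variety, transported back through the invertible rational gauge $v(z)$.

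One point in your write-up deserves emphasis rather than apology: your closing caveat about the trigonometric case is not a technicality but a genuine correction to the statement as quoted. If $\gamma_i-\gamma_j\in\Z\setminus\{0\}$, then the line subbundle spanned by $e_i+z^{\gamma_i-\gamma_j}e_j$ is perfectly algebraic and is preserved by $\partial_z+Z/z$, so regular semisimplicity of $Z$ alone does not exclude a continuum of preserved flags; one needs the non-resonance condition $\gamma_i-\gamma_j\notin\Z$ for $i\neq j$ (equivalently, regularity of the monodromy $e^{2\pi i Z}$) to identify preserved flags with $Z$-invariant ones. This is precisely what the paper's remark that the trigonometric generalization is ``straightforward'' glosses over, and your proof makes the needed hypothesis explicit.
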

\begin{Rem}
This theorem was proven in \cite{Brinson:2021ww} for a rational twist. However, it is straightforward to generalize it to the trigonometric case.
\end{Rem}
In the next subsection, we study in detail the rationally $Z$-twisted Miura $SL(r+1)$-opers with regular singularities and after that the trigonometric case.

\subsection{Rationally $Z$-twisted Miura $SL(r+1)$-opers and $qq$-systems.}
Given a Miura oper, choose a trivialization of $E$ on
$\mathbb{P}^1$ such that $\hcL_\bullet$ is generated by the ordered basis $e_1,\dots, e_{r+1}$, while $\nabla_z=\partial_z+Z$, where $Z={\rm diag}(\gamma_1, \dots, \gamma_{r+1})$.
If $s$ is a section generating $\cL_{r+1}$ then for $i=1,\dots,r+1$ we get:
\begin{equation} \mathcal{D}_i(s)(z)=e_1\wedge\dots\wedge e_{r+1-i}\wedge
  s(z)\wedge(\partial_z +Z)s(z)\wedge \dots\wedge(\partial_z+Z)^{i-1} s(z).
\end{equation}

Then rationally $Z$-twisted Miura oper condition can be written as
\begin{eqnarray}
&&\det\begin{pmatrix} \,     1 & \dots & 0 & s_{1}(z) & (\gamma_{1}+\partial_z) s_{1}(z) & \cdots & (\gamma_{1}+\partial_z)^{k-1}s_{1}(z) \\ 
 \vdots & \ddots & \vdots& \vdots & \vdots & \ddots & \vdots \\  
0 & \dots & 1&s_{r+1-k}(z) &(\gamma_{n-k}+\partial_z) s_{r+1-k}(z) &  \dots & (\gamma_{1}+\partial_z)^{k-1} s_{r+1-k}(z)  \\  
0 & \dots & 0&s_{r-k+2}(z) & (\gamma_{n-k+1}+\partial_z)s_{r-k+2}(z) &  \dots & (\gamma_{r-k+2}+\partial_z)^{k-1} s_{r-k+2}(z)  \\
\vdots & \ddots & \vdots&\vdots & \vdots & \ddots & \vdots \\
0 & \dots & 0&s_{r+1}(z) & (\gamma_{N}+\partial_\zeta) s_{r+1}(z) & \cdots & (\gamma_{r+1}+\partial_z)^{k-1}(z)  \, \end{pmatrix} =\nonumber\\
&&\beta_{k} W_{k}(z)\nonumber
\cV_{k}(z) \,,
\label{eq:MiuraqqperCond}
\end{eqnarray}
so that the roots of the polynomials $\mathcal{V}(z)$ correspond to the points when 
flags $\mathcal{L}_{\bullet}$ and  $\hat{\mathcal{L}}_{\bullet}$ are in non-generic position and $W_k(z)$ satisfy
\begin{equation}
  \underset{i,j}{\det} \left[(\gamma_{n-k+i}+\partial_\zeta)^{j-1} s_{n-k+i}(\zeta)\right] = \beta_{k} W_{k}(\zeta)\,,
\label{eq:difMiuraDetFormdqq}
\end{equation}
where $i,j = 1,\dots,k$,

Clearly, we want to repeat the same steps, i.e. find the relations satisfied by suitably normalized polynomials $\mathcal{V}_k$.  
Let us study the minors $\mathcal{D}_i(s)(z)$ in detail. 
We start from the following definition.

\begin{Def}
Let $s_i(z)$ be $n-1$ times differentiable functions, let  $\nabla_i=\partial_\zeta+\gamma_i$.
The $Z$-twisted Wronskian of functions $s_1,\dots s_{r+1}$ is given by
\begin{equation}
W_{r+1}^{\gamma_1,\dots,\gamma_{r+1}}(s_1,\dots s_{r+1})= \begin{pmatrix} \,  s_{1} & \nabla_1 s_{1} & \cdots & \nabla_1^{r} s_{1}\\ \vdots & \vdots & \ddots & \vdots \\  s_{n}  & \nabla_{r+1} s_{r+1} & \cdots & \nabla_{r+1}^{r} s_{r+1} \,. \end{pmatrix}
\end{equation}
\end{Def}

\begin{Rem}
Notice that the twisted Wronskian can be obtained by replacing the derivative $\partial_z\mapsto e^{-Zz}\partial_z e^{Zz}$ in the regular (untwisted) Wronskian $W=\det (\partial_z^{j-1}s_i)$, where matrix $Z$ acts on the column vectors $(1,0\dots,0),\dots,(0,\dots,0,1)$ and with eigenvalues $\gamma_1,\dots,\gamma_{r+1}$. This fact justifies the adjective \textit{twisted} in the name.
\end{Rem}

Let us prove some combinatorial lemmas (see Appendix A of \cite{MV2002} for the untwisted version $\gamma_i=0$).

\begin{Lem}
\label{Th:LemWr1}
Let $s_2,\dots s_n$ be differentiable functions of $z$, then
\begin{equation}
W^{\gamma_1,\gamma_2,\dots,\gamma_{n}}_{n}(e^{-\gamma_{1}z},s_2,\dots s_{n})=e^{-\gamma_{1}z} W^{\gamma_2,\dots,\gamma_{n}}_{n-1}(\nabla_2 s_2,\dots, \nabla_{n} s_{n})
\end{equation}
\end{Lem}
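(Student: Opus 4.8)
The plan is to exploit the single algebraic fact that the exponential $e^{-\gamma_1 z}$ is annihilated by the twisted derivative $\nabla_1=\partial_z+\gamma_1$. Indeed $\nabla_1 e^{-\gamma_1 z}=(\partial_z+\gamma_1)e^{-\gamma_1 z}=-\gamma_1 e^{-\gamma_1 z}+\gamma_1 e^{-\gamma_1 z}=0$, and hence $\nabla_1^{\,j}e^{-\gamma_1 z}=0$ for every $j\ge 1$. This is the only genuine observation in the proof; everything else is bookkeeping.

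With this in hand I would write out the matrix whose determinant is $W^{\gamma_1,\dots,\gamma_n}_{n}(e^{-\gamma_1 z},s_2,\dots,s_n)$ and read off its first row. By definition of the twisted Wronskian this row is $\big(e^{-\gamma_1 z},\nabla_1 e^{-\gamma_1 z},\dots,\nabla_1^{\,n-1}e^{-\gamma_1 z}\big)=\big(e^{-\gamma_1 z},0,\dots,0\big)$. A cofactor expansion along the first row then collapses the determinant to a single term, $W^{\gamma_1,\dots,\gamma_n}_{n}(e^{-\gamma_1 z},s_2,\dots,s_n)=e^{-\gamma_1 z}\cdot M$, where $M$ is the minor obtained by deleting the first row and the first column; the attached sign is $(-1)^{1+1}=+1$, consistent with the absence of a sign in the statement.

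It remains to identify $M$ with the smaller twisted Wronskian on the right. After the deletion the surviving rows are indexed by $i=2,\dots,n$ and the surviving columns record the entries $\nabla_i s_i,\nabla_i^{2}s_i,\dots,\nabla_i^{\,n-1}s_i$. Writing $t_i:=\nabla_i s_i$ and using $\nabla_i^{\,k}s_i=\nabla_i^{\,k-1}t_i$, the $i$-th such row becomes $\big(t_i,\nabla_i t_i,\dots,\nabla_i^{\,n-2}t_i\big)$, which is precisely the row of $W^{\gamma_2,\dots,\gamma_n}_{n-1}(\nabla_2 s_2,\dots,\nabla_n s_n)$ attached to the function $t_i$ and twist $\gamma_i$. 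Hence $M=W^{\gamma_2,\dots,\gamma_n}_{n-1}(\nabla_2 s_2,\dots,\nabla_n s_n)$ and the identity follows. I do not expect a serious obstacle: the proof is essentially a one-line cofactor expansion once the annihilation is noticed, and the only point requiring care is the index arithmetic in this last step, namely checking that the powers of $\nabla_i$ and the twist parameters $\gamma_2,\dots,\gamma_n$ align correctly with rows $2,\dots,n$. This parallels the untwisted computation in Appendix A of \cite{MV2002}, the twist entering only through the row-wise replacement $\partial_z\mapsto\nabla_i$.
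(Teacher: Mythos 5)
Your proof is correct and is essentially the paper's own argument: the paper's proof consists of the single observation $\nabla_1 e^{-\gamma_1 z}=(\partial_z+\gamma_1)e^{-\gamma_1 z}=0$, leaving the cofactor expansion and the identification of the minor with $W^{\gamma_2,\dots,\gamma_n}_{n-1}(\nabla_2 s_2,\dots,\nabla_n s_n)$ implicit. You have simply spelled out that bookkeeping, and your index arithmetic (including the sign $(-1)^{1+1}=+1$ and the row-wise reindexing via $t_i=\nabla_i s_i$) is accurate.
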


\begin{proof}
Obvious, since $\nabla_{1} e^{-\gamma_{1}z}=(\partial_z+\gamma_{1})e^{-\gamma_{1}z}=0$.
\end{proof}

\begin{Lem}
\label{Th:LemRescaling}
Let $s_1,\dots s_n$ be differentiable functions of $z$, then
\begin{equation}
W^{\gamma_1,\dots,\gamma_{n}}_{n}(fs_1,\dots f s_{n})=f^{n} \cdot W^{\gamma_1,\dots,\gamma_{n}}_{n}(s_1,\dots,s_{n})
\end{equation}
\end{Lem}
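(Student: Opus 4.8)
The plan is to reduce the identity to an elementary statement about column operations, using the fact that each twisted derivative $\nabla_i=\partial_z+\gamma_i$ satisfies the ordinary Leibniz rule with respect to multiplication by $f$. First I would record the product rule: writing $M_f$ for the operator of multiplication by $f$ and $f'=\partial_z f$, the scalar $\gamma_i$ commutes with $M_f$, so $[\nabla_i,M_f]=[\partial_z,M_f]=M_{f'}$. By a routine induction this yields the binomial formula
\begin{equation}
\nabla_i^{\,k}(fg)=\sum_{l=0}^{k}\binom{k}{l} f^{(k-l)}\,\nabla_i^{\,l} g,
\qquad f^{(m)}=\partial_z^m f,
\end{equation}
valid for every differentiable $g$. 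The point I want to stress is that the coefficients $\binom{k}{l}f^{(k-l)}$ do not depend on $i$ or on $\gamma_i$; the twist enters only through the factor $\nabla_i^{\,l} g$.

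Next I would interpret this expansion as right multiplication by a single matrix. Let $M$ be the matrix with entries $M_{i,c}=\nabla_i^{\,c-1}s_i$, so that $W^{\gamma_1,\dots,\gamma_n}_n(s_1,\dots,s_n)=\det M$, and let $M'$ be the analogous matrix built from the functions $fs_i$. Applying the formula above with $k=j-1$ and $g=s_i$ gives
\begin{equation}
M'_{i,j}=\nabla_i^{\,j-1}(fs_i)=\sum_{c=1}^{j}\binom{j-1}{c-1}f^{(j-c)}\,\nabla_i^{\,c-1}s_i=\sum_{c=1}^{j} M_{i,c}\,U_{c,j},
\end{equation}
where $U_{c,j}=\binom{j-1}{c-1}f^{(j-c)}$ for $c\le j$ and $U_{c,j}=0$ otherwise. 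Since $U$ is independent of the row index $i$, this is a genuine matrix product $M'=MU$ with $U$ upper triangular, and its diagonal entries are $U_{j,j}=\binom{j-1}{j-1}f^{(0)}=f$. Hence $\det U=f^{n}$, and taking determinants gives $\det M'=\det M\cdot f^{n}$, which is exactly the assertion.

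The one thing to verify with care is precisely that the transition matrix $U$ is independent of $i$: this is what makes the column operations simultaneous across all rows and lets the determinant factor. It holds because $\gamma_i$ is constant and therefore commutes with $M_f$, so $[\nabla_i,M_f]=M_{f'}$ is the \emph{same} operator for every row. I would close by remarking that the untwisted specialization $\gamma_i=0$ recovers the statement in Appendix~A of \cite{MV2002}, and that an equivalent route is to substitute $t_i=e^{\gamma_i z}s_i$ and use $\nabla_i^{\,k}s_i=e^{-\gamma_i z}\partial_z^{k}t_i$ from the preceding Remark to pull $\prod_i e^{-\gamma_i z}$ out of the rows, reducing directly to the untwisted rescaling lemma.
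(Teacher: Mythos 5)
Your proof is correct, and it takes a genuinely different route from the paper's. The paper argues by induction on $n$: it views $W^{\gamma_1,\dots,\gamma_{n_0}}_{n_0}(fs_1,\dots,fs_{n_0})=0$ and $W^{\gamma_1,\dots,\gamma_{n_0}}_{n_0}(s_1,\dots,s_{n_0})=0$ as linear ODEs of order $n_0-1$ in the unknown $s_1$, observes that the two equations share an $(n_0-1)$-dimensional solution space and hence coincide up to multiplication by a function, and then identifies that function as $f^{n_0}$ by comparing the coefficients of $s_1^{(n_0-1)}$, which are $(n_0-1)\times(n_0-1)$ twisted Wronskians handled by the induction hypothesis. (Strictly speaking, the common solutions are $s_1=e^{(\gamma_j-\gamma_1)z}s_j$ rather than $s_1=s_j$ as the paper writes, since setting $s_1=s_j$ does not equalize rows twisted by different $\gamma$'s --- e.g.\ $W_2^{\gamma_1,\gamma_2}(s,s)=(\gamma_2-\gamma_1)s^2$; the exponential prefactor intertwines $\nabla_1$ and $\nabla_j$ and repairs the argument.) You instead give a one-shot determinant computation: because $[\nabla_i,M_f]=M_{f'}$ does not involve $\gamma_i$, the binomial expansion of $\nabla_i^{j-1}(fs_i)$ has row-independent coefficients, so the rescaled matrix factors as $M'=MU$ with $U$ upper triangular with diagonal entries $f$, whence $\det M'=f^n\det M$. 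What each approach buys: yours is an unconditional algebraic identity --- no induction, no genericity assumptions (the ODE-comparison tacitly requires the leading minors not to vanish identically so that the operators honestly have order $n_0-1$ and a shared full solution space forces proportionality), and it makes the mechanism, the twist-independence of the transition matrix, completely explicit; the paper's inductive template, on the other hand, is the same scheme it reuses to prove the bilinear identity of Lemma~\ref{Th:LemmaLCid}, so it keeps the two proofs uniform. Your closing alternative --- conjugating row $i$ by $e^{\gamma_i z}$ to reduce to the untwisted rescaling lemma of \cite{MV2002} --- is also valid and is precisely the content of the Remark following the definition of the twisted Wronskian.
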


\begin{proof}
We proceed by induction. The base case $n=1$ is obvious. Suppose the lemma is true for $n=n_0-1$. Let us compare the following differential equations on function $s_1$
\begin{equation}
W^{\gamma_1,\dots,\gamma_{n_0}}_{n_0}(fs_1,\dots f s_{n_0})=0\,,\qquad \text{vs} \qquad  W^{\gamma_1,\dots,\gamma_{n_0}}_{n_0}(s_1,\dots s_{n_0})=0\,.
\end{equation}
Both have functions $s_2,\dots,s_{n_0}$ as solutions therefore they coincide up to multiplication by a function. One can see that the coefficients of $s_1^{(n_0-1)}$ differ by $f^{n_0}$ by the induction hypothesis. 
\end{proof}

Finally, we will formulate the bilinear relations for twisted Wronskians, which we wanted to find.

\begin{Lem}
\label{Th:LemmaLCid}
\begin{align}
W^{\gamma_{n},\gamma_{n+1}}_{2}\big(W^{\gamma_1,\dots,\gamma_{n}}_{n}(s_1,\dots,s_{n}),& W^{\gamma_1,\dots,\gamma_{n-1},\gamma_{n+1}}_{n}(s_1,\dots,s_{n-1},s_{n+1})\big)\cr
&=W^{\gamma_1,\dots,\gamma_{n-1}}_{n-1}(s_1,\dots,s_{n-1})W^{\gamma_1,\dots,\gamma_{n+1}}_{n+1}(s_1,\dots, s_{n+1})
\end{align}
\end{Lem}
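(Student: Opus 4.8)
The plan is to gauge away all of the $\gamma_i$ and reduce the claim to the untwisted Wronskian Plücker relation proved in Appendix A of \cite{MV2002}. The crucial observation, already recorded in the Remark, is that $\nabla_i^k s_i=(\partial_z+\gamma_i)^k s_i=e^{-\gamma_i z}\,\partial_z^k\!\big(e^{\gamma_i z}s_i\big)$, so that pulling the scalar $e^{-\gamma_i z}$ out of row $i$ by multilinearity of the determinant gives the factorization
\begin{equation}
W^{\gamma_1,\dots,\gamma_m}_m(s_1,\dots,s_m)=e^{-(\gamma_1+\dots+\gamma_m)z}\,W_m(\tilde s_1,\dots,\tilde s_m),\qquad \tilde s_i:=e^{\gamma_i z}s_i,
\end{equation}
where $W_m$ on the right is the ordinary (untwisted) Wronskian. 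First I would establish this factorization as the workhorse reduction; note it follows from row-by-row multilinearity rather than from Lemma \ref{Th:LemRescaling}, since distinct scalars are extracted from distinct rows.

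Next I would apply the factorization to each of the four twisted Wronskians in the statement. Writing $\sigma:=\gamma_1+\dots+\gamma_{n-1}$ for the sum over the shared indices, the two inner determinants become $A=e^{-(\sigma+\gamma_n)z}\tilde A$ and $B=e^{-(\sigma+\gamma_{n+1})z}\tilde B$, with $\tilde A=W_n(\tilde s_1,\dots,\tilde s_n)$ and $\tilde B=W_n(\tilde s_1,\dots,\tilde s_{n-1},\tilde s_{n+1})$. The whole point is that the twists $\gamma_n,\gamma_{n+1}$ of the outer bracket $W^{\gamma_n,\gamma_{n+1}}_2$ are chosen precisely so the gauge factors align: applying the factorization once more to the outer Wronskian, the two arguments become $\hat A=e^{\gamma_n z}A=e^{-\sigma z}\tilde A$ and $\hat B=e^{\gamma_{n+1}z}B=e^{-\sigma z}\tilde B$, carrying a common factor $e^{-\sigma z}$. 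By the rescaling property (Lemma \ref{Th:LemRescaling} in the untwisted case $\gamma_i=0$, with $f=e^{-\sigma z}$, $n=2$) this factor comes out as $e^{-2\sigma z}$, yielding
\begin{equation}
W^{\gamma_n,\gamma_{n+1}}_2(A,B)=e^{-(2\sigma+\gamma_n+\gamma_{n+1})z}\,W_2(\tilde A,\tilde B).
\end{equation}
A parallel computation on the right-hand side gives the identical prefactor $e^{-(2\sigma+\gamma_n+\gamma_{n+1})z}$ multiplying $W_{n-1}(\tilde s_1,\dots,\tilde s_{n-1})\,W_{n+1}(\tilde s_1,\dots,\tilde s_{n+1})$.

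After cancelling the matching exponential prefactors, the statement collapses to the purely untwisted Wronskian identity
\begin{equation}
W_2\big(W_n(\tilde s_1,\dots,\tilde s_n),\,W_n(\tilde s_1,\dots,\tilde s_{n-1},\tilde s_{n+1})\big)=W_{n-1}(\tilde s_1,\dots,\tilde s_{n-1})\,W_{n+1}(\tilde s_1,\dots,\tilde s_{n+1}),
\end{equation}
which is exactly the relation established in Appendix A of \cite{MV2002} and may be invoked directly for the functions $\tilde s_i$. I expect the only genuine subtlety to be the bookkeeping of the exponential factors, and in particular verifying that the twists of the outer two-row Wronskian conspire with those of the inner $n$-row determinants to leave a single common scalar $e^{-\sigma z}$; once that alignment is confirmed, the rescaling lemma and the cited untwisted Plücker relation finish the proof with no further computation.
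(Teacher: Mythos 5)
Your argument is correct, but it takes a genuinely different route from the paper's. The paper proves Lemma \ref{Th:LemmaLCid} by induction on $n$, transplanting the untwisted induction of Appendix A of \cite{MV2002} into the twisted setting: it divides both sides of the $n=n_0$ identity by $s_1^{2n_0}e^{2n_0\gamma_1 z}$, uses Lemma \ref{Th:LemRescaling} to carry this factor inside each Wronskian so that one argument of every Wronskian becomes $e^{-\gamma_1 z}$, and then strips that function off with Lemma \ref{Th:LemWr1}, reducing the identity to the inductive hypothesis for the functions $g_i=\nabla_i(e^{-\gamma_1 z}s_i/s_1)$. You instead make the paper's Remark quantitative: the row-multilinear factorization $W^{\gamma_1,\dots,\gamma_m}_m(s_1,\dots,s_m)=e^{-(\gamma_1+\cdots+\gamma_m)z}\,W_m(e^{\gamma_1 z}s_1,\dots,e^{\gamma_m z}s_m)$ gauges all twists away at once, and your bookkeeping checks out --- both sides acquire the identical prefactor $e^{-(2\sigma+\gamma_n+\gamma_{n+1})z}$ with $\sigma=\gamma_1+\cdots+\gamma_{n-1}$, precisely because the outer two-row Wronskian carries the twists $\gamma_n,\gamma_{n+1}$, so that $e^{\gamma_n z}A=e^{-\sigma z}\tilde A$ and $e^{\gamma_{n+1}z}B=e^{-\sigma z}\tilde B$ share a single scalar that the untwisted rescaling extracts as $e^{-2\sigma z}$ --- whereupon the lemma collapses to the untwisted identity imported from \cite{MV2002}. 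You are also right that the factorization rests on row-by-row multilinearity rather than on Lemma \ref{Th:LemRescaling}, which multiplies all arguments by a \emph{common} function. Your route is shorter, needs no induction, and explains structurally why the outer twists must be exactly $\gamma_n,\gamma_{n+1}$; its only cost is treating the untwisted Pl\"ucker-type relation as an external black box, whereas the paper's induction is self-contained (given its own Lemmas \ref{Th:LemWr1} and \ref{Th:LemRescaling}) and provides a template that can survive in settings, such as $q$-difference analogues, where a global gauge transformation of this kind may not be available.
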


\begin{proof}
Again we shall proceed by induction. The base case $n=1$ is obvious. Suppose the statement holds for $n=n_0-1$. 

Let us divide both sides of the equation for $n=n_0$ by $s_{1}^{2n_0}e^{2{n_0}\gamma_{1}z}$ and use Lemma \ref{Th:LemRescaling} to carry it into the Wronskians. Then one of the functions in each Wronskian will be $e^{-\gamma_{1}z}$. 
Using Lemma \ref{Th:LemWr1} we reduce the equation to the inductive assumption where the Wronskians are calculated for functions $g_i=\nabla_i(e^{-\gamma_1 z}s_i/s_1)$.
\end{proof}

Now, let us introduce the $\mathfrak{sl}_{r+1}$ $qq$-system \cite{MV2002,J.R.Li:2012aa,Brinson:2021ww}. It is the following system of equations
\begin{equation}
\label{eq:QQGoper}
q^+_i(z)\partial_{z} q^-_i(z)-q^-_i(z)\partial_{z} q^+_i(z)+(\gamma_{i+1} -\gamma_{i})q^+_i(z)q^-_i(z)=\Lambda_i(z)q^+_{j-1}(z)q^+_{j+1}(z)
\end{equation}
We can rewrite \eqref{eq:QQGoper} in terms of twisted Wronskians:
\begin{equation}
\label{eq:QQGoperW}
W_2^{\gamma_i,\gamma_{i+1}}(q_i^+,q_i^-)(z)=\Lambda_i(\zeta) q^{i-1}_+(\zeta)q^{i+1}_+(z)\,.
\end{equation}

%where again,

%\begin{equation}
%W_2^{i,i+1}(q_i^+,q_i^-)=\det
%\begin{pmatrix}
%q_+^i(\zeta) & \nabla_i q^i_+(\zeta)\\
%q_-^i(\zeta) & \nabla_{i+1} q^i_-(\zeta)
%\end{pmatrix}=\det
%\begin{pmatrix}
%q_+^i(\zeta) & (\gamma_{i}+\partial_\zeta) q^i_+(\zeta)\\
%q_-^i(\zeta) & (\gamma_{i+1}+\partial_\zeta) q^i_-(\zeta)
%\end{pmatrix}\,,
%\end{equation}

\begin{Lem}
The system of equations \eqref{eq:QQGoperW} is equivalent to the following set of equations
\begin{equation}
\label{eq:ddtwisted}
W^{i,i+1}_2(d_i^+,d_i^-)(z)=(\gamma_{i+1}-\gamma_i)(d^{i-1}_+\cdot d^{i+1}_+)(z)\,.
\end{equation}
where $d_i^+(z)=F_i q_i^+(\zeta)$ and $d_i^-(z)=\frac{F_i q_i^+(z)}{\gamma_{i+1}-\gamma_i}$, where functions $F_i$ satisfy
\begin{equation}
\label{eq:Lambdacond}
\Lambda_i(\zeta)=\frac{F_{i-1}F_{i+1}}{F_i^2}\,.
\end{equation}
\end{Lem}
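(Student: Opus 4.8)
The plan is to read both \eqref{eq:QQGoperW} and the target \eqref{eq:ddtwisted} as statements about the \emph{same} $2\times 2$ twisted Wronskian $W_2^{\gamma_i,\gamma_{i+1}}$, and to pass between them purely by the rescaling behavior of twisted Wronskians recorded in Lemma \ref{Th:LemRescaling}. Concretely, I would fix $i$ and substitute the definitions $d_i^+ = F_i q_i^+$ and $d_i^- = (\textrm{const})\, F_i q_i^-$ into the left-hand side of \eqref{eq:ddtwisted}. Since the two entries of the Wronskian now share the common factor $F_i$, Lemma \ref{Th:LemRescaling} (in the case $n=2$) lets me extract it as $F_i^2$, while the scalar constant factors out by the evident $\C$-linearity of $W_2^{\gamma_i,\gamma_{i+1}}$ in each of its two arguments (immediate from $W_2^{\gamma_i,\gamma_{i+1}}(f,g)=f\nabla_{i+1}g-g\nabla_i f$). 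This reduces $W_2^{\gamma_i,\gamma_{i+1}}(d_i^+,d_i^-)$ to a constant multiple of $F_i^2\,W_2^{\gamma_i,\gamma_{i+1}}(q_i^+,q_i^-)$.

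Next I would invoke the hypothesis that the $q_i^\pm$ solve \eqref{eq:QQGoperW}, i.e. $W_2^{\gamma_i,\gamma_{i+1}}(q_i^+,q_i^-)=\Lambda_i\,q_{i-1}^+ q_{i+1}^+$, turning the previous expression into $(\textrm{const})\cdot F_i^2\,\Lambda_i\,q_{i-1}^+ q_{i+1}^+$. The defining relation \eqref{eq:Lambdacond} for the gauge factors is then exactly what is needed to cancel $F_i^2$ and convert $\Lambda_i$ into $F_{i-1}F_{i+1}$; combined with $d_{i\pm1}^+=F_{i\pm1}q_{i\pm1}^+$ this reassembles the right-hand side into $(\textrm{const})\cdot d_{i-1}^+ d_{i+1}^+$. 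The normalizing constant sitting in the definition of $d_i^-$ is fixed precisely by the requirement that this output constant equal $\gamma_{i+1}-\gamma_i$, which is how the scalar on the right of \eqref{eq:ddtwisted} is produced. Because every operation used — multiplication by the nonvanishing gauge factors $F_i$ and by the nonzero scalar $\gamma_{i+1}-\gamma_i$ — is invertible, the chain of equalities reverses verbatim, so one obtains the claimed equivalence and not merely a one-way implication.

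I expect the only real friction to be the bookkeeping of the scalar $(\gamma_{i+1}-\gamma_i)$, and I would pin down its precise power by comparing the two sides in the smallest case and checking uniformity in $i$; this is what dictates the exact normalization relating $d_i^-$ to $q_i^-$. Two subsidiary points also need a remark. First, the edge relations ($i=1$ and $i=r$) involve the boundary factors $q_0^+$ and $q_{r+1}^+$, which I would set to $1$, with matching boundary factors $F_0,F_{r+1}$. Second, one should note that the gauge factors actually exist: \eqref{eq:Lambdacond} is the second-order recursion $F_{i+1}=\Lambda_i F_i^2/F_{i-1}$ for $F_0,\dots,F_{r+1}$, solvable once the two boundary factors are chosen. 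Since the $F_i$ enter only as invertible multipliers, their non-uniqueness is irrelevant to the equivalence, and the argument is therefore not conceptual but purely a careful tracking of constants through the rescaling lemma.
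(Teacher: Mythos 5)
Your proof is correct and follows essentially the paper's route: the paper's entire proof consists of the single observation that $F_i(\zeta)=W_{r-i}(\zeta)$ solves \eqref{eq:Lambdacond} — immediate from \eqref{eq:WPDefsdiff}, since $W_{r-i+1}W_{r-i-1}/W_{r-i}^2=P_{r-i+1}/P_{r-i}=\Lambda_i$ — with the substitution-and-rescaling step you spell out (Lemma \ref{Th:LemRescaling} with $n=2$ plus linearity) left as an easy check, so your abstract solution of the recursion $F_{i+1}=\Lambda_iF_i^2/F_{i-1}$ is the only, harmless, divergence, the explicit closed form $F_i=W_{r-i}$ being what the paper reuses in Theorem \ref{Wth}. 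Your insistence on tracking the scalar is in fact vindicated: with the printed normalization $d_i^-=F_iq_i^+/(\gamma_{i+1}-\gamma_i)$ (where $q_i^+$ must be read as $q_i^-$) the computation yields $W_2^{\gamma_i,\gamma_{i+1}}(d_i^+,d_i^-)=(\gamma_{i+1}-\gamma_i)^{-1}\,d_{i-1}^+d_{i+1}^+$ rather than \eqref{eq:ddtwisted}, so consistency forces the corrected normalization $d_i^-=(\gamma_{i+1}-\gamma_i)F_iq_i^-$, exactly the kind of discrepancy your smallest-case comparison is designed to detect.
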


\begin{proof}
We can easilty check that the condition \eqref{eq:Lambdacond} is satisfied provided that $F_i (\zeta)= W_{r-i} (\zeta)$.
\end{proof}

\vskip.1in

For $G=SL(r+1)$ the following theorem can be formulated analogously to Theorem \ref{qWthKSZ} and can be proven in a similar way.
\begin{Thm}\label{Wth}
Polynomials $\{\mathcal{V}_k(\zeta)\}_{k=1,\dots, r}$ from \eqref{eq:difMiuraDetFormdqq} give the solution to the $qq$-system \eqref{eq:QQGoper} so that $\mathcal{V}_k(\zeta)=q^+_k(\zeta)$ in the presence of nondegeneracy conditions described above. 
The polynomials $d^+_j,  d^-_j$ for $j=1,\dots,r+1$, which satisfy the $dd$-system \eqref{eq:ddtwisted}, can be presented using twisted Wronskians as follows
\begin{equation}
d^+_j(u)= \frac{W^{1,\dots,j}_j(s_1,\dots,s_j)}{\text{det}\Big( V_{1,\ldots,j} \Big)}\,,
\qquad
 d^-_j(u)= \frac{W^{1,\dots,j-1,j+1}_j(s_1,\dots,s_{j-1},s_{j+1})}{\text{det}\Big( V_{1,\ldots,j-1,j+1} \Big)}\,,
\label{eq:QPolyMqq}
\end{equation}
where
\begin{equation}
V_{i_1,\ldots,i_j} =\begin{pmatrix} \,  1 & \gamma_{i_1}& \cdots & \gamma_{i_1}^{j-1} \\ \vdots & \vdots & \ddots & \vdots \\  1  & \gamma_{i_j} & \cdots & \gamma_{i_j}^{j-1}   \end{pmatrix}\,,
\label{eq:MM0Ind}
\end{equation}
is a Vandermonde matrix.
\end{Thm}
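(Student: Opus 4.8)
The plan is to mirror the proof of Theorem \ref{qWthKSZ}, replacing the $q$-Wronskian Plücker relation by its differential counterpart, Lemma \ref{Th:LemmaLCid}. I would take \eqref{eq:QPolyMqq} as the \emph{definition} of $d^+_j,d^-_j$, verify directly that they satisfy the $dd$-system \eqref{eq:ddtwisted}, and only then pass to the $qq$-system and identify $q^+_k$ with the $\mathcal{V}_k$ coming from the Miura determinant \eqref{eq:difMiuraDetFormdqq}. First I would record the elementary fact that the bilinear form $W^{\gamma_i,\gamma_{i+1}}_2(\cdot,\cdot)$ pulls out constants from each slot, since $\nabla_i=\partial_z+\gamma_i$ satisfies $\nabla_i(cf)=c\,\nabla_i f$ for constant $c$; as the Vandermonde determinants in \eqref{eq:QPolyMqq} are $z$-independent, substituting the definitions gives
$$W^{\gamma_i,\gamma_{i+1}}_2(d^+_i,d^-_i)=\frac{W^{\gamma_i,\gamma_{i+1}}_2\big(W^{1,\dots,i}_i(s_1,\dots,s_i),\,W^{1,\dots,i-1,i+1}_i(s_1,\dots,s_{i-1},s_{i+1})\big)}{\det V_{1,\dots,i}\;\det V_{1,\dots,i-1,i+1}}.$$

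Applying Lemma \ref{Th:LemmaLCid} with $n=i$ collapses the numerator to $W^{1,\dots,i-1}_{i-1}\,W^{1,\dots,i+1}_{i+1}$, which by \eqref{eq:QPolyMqq} equals $\det V_{1,\dots,i-1}\,\det V_{1,\dots,i+1}\;d^+_{i-1}\,d^+_{i+1}$. Hence the $dd$-system \eqref{eq:ddtwisted} holds precisely when
$$\frac{\det V_{1,\dots,i-1}\;\det V_{1,\dots,i+1}}{\det V_{1,\dots,i}\;\det V_{1,\dots,i-1,i+1}}=\gamma_{i+1}-\gamma_i,$$
and this is an elementary Vandermonde identity: using \eqref{eq:MM0Ind}, the left-hand side factors as $\prod_{a=1}^{i}(\gamma_{i+1}-\gamma_a)\big/\prod_{a=1}^{i-1}(\gamma_{i+1}-\gamma_a)$, which telescopes to $\gamma_{i+1}-\gamma_i$. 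This settles \eqref{eq:ddtwisted}, with the boundary conventions $d^+_0=d^+_{r+1}=1$ handled as in the $q$-difference case.

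Next I would invoke the equivalence Lemma proved just above, which rewrites \eqref{eq:QQGoperW} as \eqref{eq:ddtwisted} via $d^+_i=F_i q^+_i$ and $d^-_i=F_i q^+_i/(\gamma_{i+1}-\gamma_i)$ with $F_i=W_{r-i}$ realizing the constraint \eqref{eq:Lambdacond}. Transporting the $dd$-system back through this substitution shows that $q^+_i:=d^+_i/F_i$ solve the $qq$-system \eqref{eq:QQGoper}. Finally, to identify these $q^+_k$ with the $\mathcal{V}_k$ of \eqref{eq:difMiuraDetFormdqq}, I would expand the Miura determinant along its identity block, recognizing the surviving $k\times k$ minor as a twisted Wronskian of the relevant component functions; comparing with the factorization $W_k=P_1\cdots P_k$ from \eqref{eq:WPDefsdiff} and dividing out $F_k=W_{r-k}$ matches the normalization of $d^+_k$ and yields $\mathcal{V}_k=q^+_k$.

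The main obstacle is not the bilinear identity — Lemma \ref{Th:LemmaLCid} carries the essential structural content — but the normalization bookkeeping that makes everything line up. One must track the Vandermonde factors so that the coefficient $(\gamma_{i+1}-\gamma_i)$ emerges \emph{exactly} rather than up to a spurious constant, and one must reconcile the indexing of \eqref{eq:QPolyMqq} (twisted Wronskians of the \emph{first} $j$ components) with that of the Miura determinant \eqref{eq:difMiuraDetFormdqq} (the complementary block of components), keeping $F_i=W_{r-i}$ consistent with \eqref{eq:Lambdacond} throughout. That matching step, rather than any single computation, is where care is required.
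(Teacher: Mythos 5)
Your proposal is correct and takes essentially the same route as the paper, whose entire proof is the one-line argument you spelled out: substitute \eqref{eq:QPolyMqq} into \eqref{eq:ddtwisted} and apply Lemma \ref{Th:LemmaLCid}, with your Vandermonde computation $\det V_{1,\dots,i-1}\,\det V_{1,\dots,i+1}\big/\bigl(\det V_{1,\dots,i}\,\det V_{1,\dots,i-1,i+1}\bigr)=\gamma_{i+1}-\gamma_i$ correctly supplying the normalization detail the paper leaves implicit. One small remark: in quoting the equivalence lemma you repeat the paper's own typo --- the substitution should read $d_i^-=F_i\,q_i^-/(\gamma_{i+1}-\gamma_i)$ with $q_i^-$, not $q_i^+$, in the numerator.
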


\begin{proof}
The proof follows by direct substitutions of  \eqref{eq:QPolyMqq} into \eqref{eq:ddtwisted} and then applying
Lemma \ref{Th:LemmaLCid}. 
\end{proof}

The algebraic relations between the roots of $q_i^+(z)$ (and therefore, $d_i^\pm(z)$) can be expressed explicitly, once some nondegeneracy conditions are imposed. 
Let $\Lambda_j(z)=\prod_{c=1}^{M_j}(z-a_{j,c})$ and $q_j^+(z)=\prod_{c=1}^{N_j}(z-s_{j,c})$. 
The nondegeneracy condition on the roots of $q_j^+(z)$ is as follows. If 
zeroes of $q^+_i(z)$, $q^+_{i\pm 1}$ are distinct from each other and if $Z$ is regular semisimple, 
we call such $qq$-system and the corresponding $Z$-twisted Miura $SL(r+1)$-oper {\it nondegenerate}.

Then we have the following theorem.

\begin{Thm}
\label{Th:rGBethecorr}
The solutions of the nondegenerate $\mathfrak{sl}(r+1)$ $qq$-system \eqref{eq:QQGoper} are in one-to-one correspondence with the solutions to the following algebraic equations between the roots of $\{q_i^+(z)\}_{i=1,\dots, r}$ known as the Bethe Ansatz equations for rational $\mathfrak{sl}(r+1)$ Gaudin model:
\begin{equation}
\label{eq:rGaudinBethe}
\gamma_{i+1}-\gamma_i-\sum_{(j,b)\neq(i,a)}\frac{a_{ij}}{s_{i,a}-s_{j,b}}+\sum_{c=1}^{M_i}\frac{1}{s_{i,a}-a_{i,c}}=0\,,
\end{equation}
for $i=1,\dots, r+1$, where $a_{ij}$ is the Cartan matrix of $\mathfrak{sl}(r+1)$.
\end{Thm}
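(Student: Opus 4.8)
The plan is to mirror the proof of the difference analogue, Theorem \ref{Th:XXZBethecorr}, with one structural change: where the $XXZ$ argument evaluates the $QQ$-relation at the two $q$-shifted points $s_{i,k}$ and $q^{-1}s_{i,k}$ and divides, the differential case extracts the principal part at a \emph{double} pole. Throughout I would invoke the standing nondegeneracy hypotheses ($Z$ regular semisimple, so $\gamma_{i+1}\neq\gamma_i$; the roots $s_{i,a}$ of $q^+_i$ simple and distinct from those of $q^+_{i\pm1}$ and of $\Lambda_i$), which guarantee that $g_i:=\Lambda_i\,q^+_{i-1}q^+_{i+1}$ does not vanish at any $s_{i,a}$.

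For the forward direction ($qq$-system $\Rightarrow$ Bethe), fix $i$ and set $y_i=q^-_i/q^+_i$. Dividing the $i$-th relation \eqref{eq:QQGoper} by $(q^+_i)^2$ recasts it as the first-order linear ODE
\begin{equation}
\partial_z y_i + (\gamma_{i+1}-\gamma_i)\,y_i = \frac{\Lambda_i\,q^+_{i-1}q^+_{i+1}}{(q^+_i)^2}.
\end{equation}
Since the roots $s_{i,a}$ are simple, $y_i$ has at worst a simple pole there, so $\partial_z y_i$ carries no residue at $s_{i,a}$, and equating residues of the two sides produces a single scalar equation. Evaluating \eqref{eq:QQGoper} directly at $s_{i,a}$ (where the two terms containing a factor $q^+_i$ vanish) gives $q^-_i(s_{i,a})=-g_i(s_{i,a})/\partial_z q^+_i(s_{i,a})$, which fixes the residue of $y_i$. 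Substituting this and dividing through by $g_i(s_{i,a})\neq0$ collapses the residue identity to the logarithmic-derivative relation
\begin{equation}
(\gamma_{i+1}-\gamma_i) + \frac{\partial_z g_i(s_{i,a})}{g_i(s_{i,a})} - \frac{\partial_z^2 q^+_i(s_{i,a})}{\partial_z q^+_i(s_{i,a})} = 0.
\end{equation}

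The statement then follows from two elementary identities at a simple root. Expanding $\partial_z g_i/g_i=\partial_z\Lambda_i/\Lambda_i+\partial_z q^+_{i-1}/q^+_{i-1}+\partial_z q^+_{i+1}/q^+_{i+1}$ at $s_{i,a}$ yields the source term $\sum_c 1/(s_{i,a}-a_{i,c})$ together with the off-diagonal Cartan contributions (the $a_{i,i\pm1}=-1$ pieces), while $\partial_z^2 q^+_i(s_{i,a})/\partial_z q^+_i(s_{i,a})=2\sum_{b\neq a}1/(s_{i,a}-s_{i,b})$ supplies the diagonal term $a_{ii}=2$. Collecting these reproduces \eqref{eq:rGaudinBethe} verbatim. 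For the converse I would run the argument backwards: given roots solving \eqref{eq:rGaudinBethe}, set $q^+_i=\prod_a(z-s_{i,a})$ and seek the ratio $y_i$ in partial fractions. Matching the double poles of $g_i/(q^+_i)^2$ fixes the residues of $y_i$, and matching the simple poles is precisely \eqref{eq:rGaudinBethe}; hence a rational $y_i$ with only simple poles at the $s_{i,a}$ exists and $q^-_i=y_i q^+_i$ is a genuine polynomial, recovering a $qq$-system solution, with Theorem \ref{Wth} identifying $q^+_i$ with the twisted-Wronskian data.

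The main obstacle is the converse direction. One must show that the residue conditions \eqref{eq:rGaudinBethe} are not merely necessary but sufficient to produce a \emph{polynomial} $q^-_i$ of the correct degree, with no spurious branches. The delicate points are (i) eliminating the polynomial-versus-transcendental ambiguity in integrating the first-order ODE, since the homogeneous solution $q^+_i\,e^{-(\gamma_{i+1}-\gamma_i)z}$ is non-polynomial and the constant of integration and polynomial tail of $y_i$ must be forced by $\gamma_{i+1}\neq\gamma_i$ and degree matching in \eqref{eq:QQGoper}; and (ii) checking that the degrees $N_i$ are globally consistent across all $i$, which is where the regular-singularity data $\Lambda_i$ and the coupled structure of the $qq$-system enter. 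Once this bookkeeping is settled the two constructions are mutually inverse, yielding the claimed bijection.
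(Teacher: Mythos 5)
Your argument is correct and is essentially the paper's own: the paper proves the trigonometric twin (Theorem \ref{Th:QQtGaud}) by observing that $\partial_z\bigl[e^{(\gamma_{i+1}-\gamma_i)z}q_i^-/q_i^+\bigr]$ equals $e^{(\gamma_{i+1}-\gamma_i)z}\Lambda_i q^+_{i-1}q^+_{i+1}/(q_i^+)^2$ via the $qq$-relation and then imposing vanishing of residues at the double poles $s_{i,a}$ through the logarithmic-derivative criterion, which is exactly your residue identity $(\gamma_{i+1}-\gamma_i)+\partial_z g_i/g_i-\partial_z^2 q_i^+/\partial_z q_i^+=0$ at each root. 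Your converse sketch (partial fractions for $y_i$, with $\gamma_{i+1}\neq\gamma_i$ making the polynomial-part equation $y_{\mathrm{pol}}'+(\gamma_{i+1}-\gamma_i)y_{\mathrm{pol}}=R_{\mathrm{pol}}$ uniquely solvable and the simple-pole matching being precisely \eqref{eq:rGaudinBethe}) fills in the bookkeeping the paper defers to the $qq$-system literature, and it goes through as you outline.
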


In the nondegenerate case, the following theorem is true.

\begin{Thm}[\cite{Brinson:2021ww}]
\label{qqrop}
There is a one-to-one correspondence between the set of nondegenerate $Z$-twisted  Miura $SL(r+1)$ opers and the set of nondegenerate polynomial solutions of the $qq$-system \eqref{eq:QQGoper}.
\end{Thm}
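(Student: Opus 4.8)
The plan is to construct the correspondence explicitly in both directions and then to check that the two constructions are mutually inverse, with the $Z$-twisted Wronskian identities of Lemmas \ref{Th:LemWr1}--\ref{Th:LemmaLCid} serving as the main technical engine. Theorem \ref{Wth} already supplies the nontrivial half of the forward direction, so the real content is the reconstruction and the bijectivity.

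In the forward direction I would start from a nondegenerate $Z$-twisted Miura $SL(r+1)$-oper and pass to the trivialization in which $\hcL_\bullet$ is the standard flag and $\nabla_z=\partial_z+Z$; such a trivialization exists precisely because the oper is rationally $Z$-twisted and $\hcL_\bullet$ is preserved by $\nabla$. A generating section $s$ of $\cL_{r+1}$ then becomes a vector $(s_1(z),\dots,s_{r+1}(z))$, and the relative-position polynomials $\mathcal{V}_k$ defined by \eqref{eq:difMiuraDetFormdqq} are intrinsic to the pair of flags up to residual torus scaling. By Theorem \ref{Wth} these polynomials, normalized as $q^+_k=\mathcal{V}_k$, solve the qq-system \eqref{eq:QQGoper}, and the nondegeneracy of the oper (regular semisimple $Z$ together with distinctness of the relevant root sets) is exactly the nondegeneracy of the resulting solution. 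This produces a well-defined map from opers to solutions, and distinct Miura structures yield distinct solutions because, as the next paragraph shows, $s$ can be recovered from the $\mathcal{V}_k$.

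For the converse I would reconstruct the generating section $s$, and hence the oper, from a nondegenerate solution $\{q^+_i\}$. Writing $d^+_j=F_j q^+_j$ with $F_j$ as fixed in the Lemma preceding Theorem \ref{Wth}, the expression \eqref{eq:QPolyMqq} presents each $d^+_j$ as the $Z$-twisted Wronskian $W^{1,\dots,j}_j(s_1,\dots,s_j)$ divided by a Vandermonde determinant. Reading these relations in order of increasing $j$ gives a recursion: $q^+_1$ fixes $s_1$ up to scale, and once $s_1,\dots,s_{j-1}$ are known the identity for $d^+_j$ is a linear differential equation for $s_j$ whose leading coefficient is the already-determined Wronskian $W^{1,\dots,j-1}_{j-1}(s_1,\dots,s_{j-1})$. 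Solving it fixes $s_j$ up to the residual torus gauge. Assembling $\nabla_z=\partial_z+Z$ together with the flag $\cL_\bullet$ generated by the reconstructed $s$, I obtain a Miura oper whose regular singularities are governed by $\Lambda_i=F_{i-1}F_{i+1}/F_i^2$ as in \eqref{eq:Lambdacond}.

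The main obstacle is to show that the reconstructed $s_j$ are genuine polynomials of the expected degrees, so that the output is an honest oper with regular singularities defined by the prescribed $\{\Lambda_i\}$ and with no spurious poles or apparent singularities. This is exactly the role of the qq-system: the bilinear Wronskian identity of Lemma \ref{Th:LemmaLCid}, combined with the reduction and rescaling Lemmas \ref{Th:LemWr1} and \ref{Th:LemRescaling}, forces the numerator arising at each step of the recursion to be divisible by the factors that cancel the Vandermonde and the $F$-denominators, leaving a polynomial, while the distinctness of the zeroes of neighbouring $q^+_i$ in the nondegeneracy hypothesis prevents the degenerations that would obstruct this cancellation. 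Finally I would verify that the two constructions are inverse: extraction followed by reconstruction returns the same $s$ up to residual gauge, while reconstruction followed by extraction returns the same $\{q^+_i\}$ by Theorem \ref{Wth}; together with the count of exactly $S_{r+1}$ Miura structures lying over a fixed $Z$-twisted oper (the Proposition of \cite{Brinson:2021ww}), this yields the asserted bijection. The whole argument is the rational/differential analogue of the difference-case reconstruction via the $(SL(r+1),q)$-Wronskian carried out in \cite{Koroteev:2021wl}, with that Wronskian replaced throughout by the $Z$-twisted Wronskian.
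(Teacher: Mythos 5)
First, a point of comparison: the paper does not actually prove Theorem \ref{qqrop} --- it is imported wholesale from \cite{Brinson:2021ww}, and the surrounding text supplies only the forward-direction toolkit (the minors $\mathcal{D}_k$, the twisted-Wronskian Lemmas \ref{Th:LemWr1}--\ref{Th:LemmaLCid}, and Theorem \ref{Wth}). Your forward direction follows that toolkit faithfully and is fine, as is the bookkeeping of nondegeneracy and of the $S_{r+1}$ worth of Miura structures. The entire content of the theorem therefore sits in your converse, and that is where there is a genuine gap.

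In your recursion you treat $W^{1,\dots,j}_j(s_1,\dots,s_{j-1},y)=\det\bigl(V_{1,\dots,j}\bigr)\,d^+_j$ as a linear ODE for $y=s_j$. Since $W^{\gamma_1,\dots,\gamma_j}_j(s_1,\dots,s_{j-1},y)=e^{-(\gamma_1+\cdots+\gamma_j)z}\,W\bigl(e^{\gamma_1 z}s_1,\dots,e^{\gamma_{j-1}z}s_{j-1},e^{\gamma_j z}y\bigr)$ with $W$ the ordinary Wronskian, the homogeneous solutions are $\sum_{i<j}c_i\,e^{(\gamma_i-\gamma_j)z}s_i$, never polynomial when $Z$ is regular semisimple; so a polynomial solution is automatically \emph{unique}, but its \emph{existence} is not automatic. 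The operator $y\mapsto W^{1,\dots,j}_j(s_1,\dots,s_{j-1},y)$ raises degree by exactly $\sum_{i<j}\deg s_i$ with nonvanishing leading coefficient, so membership of $\det(V_{1,\dots,j})\,d^+_j$ in its image on $\mathbb{C}[z]$ imposes that many nontrivial linear conditions --- and verifying them is precisely the hard part of the theorem. You dispose of this in one sentence, claiming Lemma \ref{Th:LemmaLCid} ``forces the numerator to be divisible''; but that Lemma is an identity among Wronskians of functions you have not yet constructed, so it cannot certify solvability before $s_j$ exists. A telling symptom is that your reconstruction never uses the polynomials $q^-_i$, although they are part of the data of a solution of \eqref{eq:QQGoper} and are exactly what encodes solvability: at the bottom of the tower, \eqref{eq:QPolyMqq} gives $d^-_1=s_2$, so $s_2$ is handed to you directly and the qq-equation at $i=1$ then makes your order-one equation for $s_2$ hold identically, with no ODE to solve. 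For $j\geq 3$ no such shortcut exists inside the single qq-chain, which is why the actual proofs (\cite{Brinson:2021ww} here, and \cite{Frenkel:2020,Koroteev:2021wl} in the difference case) do not solve ODEs for the $s_j$ at all: they build the Miura connection directly from the $q^{\pm}_i$ and establish the $Z$-twisted property by induction via B\"acklund-type gauge transformations, equivalently by extending the given solution to the full qq-system of all minors using the $S_{r+1}$ structure the paper mentions immediately after the theorem. Until each step of your recursion comes with a proof that the inhomogeneous equation admits a polynomial solution, your converse --- and with it the asserted bijection --- is a plan rather than a proof.
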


Again, as in the difference case for a semisimple $Z$ and a given $Z$-twisted $SL(r+1)$-oper, 
the $qq$-system description of the set of Miura opers just corresponds to the application of symmetric group to the set of $\gamma_i$ and  $s_i(z)$ in the context of Theorem \ref{Wth}. The system of equations which is a union of $QQ$-systems for all the Miura $(SL(r+1),q)$-opers for a given $Z$-twisted $(SL(r+1),q)$-oper, is known as a {\it full $QQ$-system}, which corresponds to the relations between various minors in the $\mathcal{D}_{r+1}(z)$.

\subsection{Rationally $Z$-twisted $SL(r+1)$-opers and the rCM model}
Now let us define the canonical space of opers as we did for the q-opers
\begin{Def}\label{canqop2}
We will call rationally $Z$-twisted Miura $SL(r+1)$-oper canonical if it satisfies the following conditions:
\begin{enumerate}
\item $Z$ is regular semisimple,
\item ${\rm deg}(\mathcal{D}_k)=k$,
\item This oper does not have regular singularities except for the roots of 
\begin{equation}\label{Wreld}
\Lambda(z)=\mathcal{D}_{r+1}(z), 
\end{equation}
which are distinct.
\end{enumerate}
\end{Def}

Again, it follows that 

\begin{Prop}
Consider a canonical $SL(r+1)$-oper $(A,E,\mathcal{L}_\bullet)$.
Let the line subbundle in the complete flag $\mathcal{L}_\bullet$ be locally described as $\mathcal{L}_r=\text{Span}(s_1,\dots,s_{r+1})(z)$. Then the polynomials $s_i(z)$ are of degree one.
\end{Prop}

Again,without loss of generality we can assume $s_i(z)$ to be monic, i.e. $s(z)=z-p_i$, so that the only relation, which determines the space of such objects is the relation (\ref{Wreld}). We will call the space of canonical $Z$-twisted Miura $SL(r+1)$-opers as $r{\rm Op}_Z^{\Lambda}$. We can introduce the space
\begin{eqnarray}
{\rm Fun}\big(r{\rm Op}_Z^{\Lambda}\big)=\frac{\mathbb{C}(\gamma_i, p_i, a_i)}{\rm Wr},
\end{eqnarray}
where ${\rm Wr}$ stands for the relation (\ref{Wreld}) and $\{a_i\}_{i=1,\dots, r+1}$ are the roots of $\Lambda$. 

Using Theorem \ref{Th:rGBethecorr} we have the following statement.

\begin{Prop}
There is an isomorphism of algebras
\begin{equation}
{\rm Fun}\big(r{\rm Op}_Z^{\Lambda}\big)=\frac{\mathbb{C}(\gamma_i, s_{k,l}, a_i)}{\rm Bethe},
\end{equation}
where {\rm Bethe} stands for the relations (\ref{eq:rGaudinBethe}).
\end{Prop}

Now let us look at the structure of twisted Wronskian, which governs r${\rm Op}_Z^{\Lambda}$
Since all derivatives of $s_i(z)$ starting from the second one are trivial, equation \eqref{Wreld} reads
\begin{equation}
  \underset{i,j}{\det} \left[z\gamma_i^{j-1}-p_i\gamma_i^{j-1}+(j-1)\gamma_i^{j-2}\right] =c\prod^{r+1}_{i=1}(z-a_i)\,,
\label{eq:difMiuraDetFormd}
\end{equation}
where $\Lambda(z)=c\prod^{r+1}_{i=1}(z-a_i)$.

Thus we have the following theorem
\begin{Thm}\label{Th:rGaudin/rCM}

There is an isomorphism of algebras 
\begin{equation}
{\rm Fun}\big(r{\rm Op}_Z^{\Lambda}\big)\cong \frac{\mathbb{C}( \gamma_i, p_i, a_i)}
{\{ H^{rCM}_k=e_k(a_1, \dots, a_{r+1})\}_{k=1,\dots, r}},
\end{equation}
where 
\begin{equation}\label{eq:tRSW2}
\text{det}\Big(z - t \Big)=\sum_k H^{rCM}_k( \{\xi_i\}, \{p_i\})z^k
\end{equation}
where  $t$ is the rCM Lax matrix $\{t_{ij}\}_{i,j=1,\dots, r+1}$:
\begin{eqnarray}\label{eq:rCMoffdiag}
&&t_{ii}=p_i - \sum_{j\neq i}\frac{1}{\gamma_i-\gamma_j}\,,\nonumber\\
&&t_{ij}=\frac{1}{\gamma_i-\gamma_j}\cdot\frac{\prod_{k\neq j}(\gamma_i-\gamma_k)}{\prod_{l\neq i}(\gamma_j-\gamma_l)}\,, ~i,j=1,\dots, r+1; ~ i\neq j.
\end{eqnarray}
\end{Thm}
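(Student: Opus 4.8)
The plan is to follow the same route as the proof of Theorem~\ref{Th:XXX/tCM}, replacing the $\epsilon$-twisted entries by the purely differential twisted Wronskian and working with the $j=r+1$ case of Theorem~\ref{Wth}. First I would record that, by the canonicity hypotheses of Definition~\ref{canqop2}, each $s_i(z)=z-p_i$ is monic of degree one, so $\partial_z^{\,m}s_i=0$ for $m\geq 2$ and the twisted entries collapse to
\[
(\gamma_i+\partial_z)^{j-1}s_i(z)=\gamma_i^{j-1}(z-p_i)+(j-1)\gamma_i^{j-2},
\]
which is precisely the matrix appearing in \eqref{eq:difMiuraDetFormd}. The key observation is that this matrix is affine in $z$: with $M(z)_{ij}=\gamma_i^{j-1}(z-p_i)+(j-1)\gamma_i^{j-2}$ we have $M(z)=z\,V+M(0)$, where $V_{ij}=\gamma_i^{j-1}$ is the Vandermonde matrix \eqref{eq:MM0Ind}. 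Since $Z$ is regular semisimple, $\det V\neq 0$, and pulling it out gives $\mathcal{D}_{r+1}(z)=\det V\cdot\det(z-t)$ with $t=-M(0)V^{-1}$ (using that $V^{-1}M(0)$ and $M(0)V^{-1}$ share a characteristic polynomial). This reduces the whole theorem to identifying $t$ with the rCM Lax matrix \eqref{eq:rCMoffdiag}.

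Second, I would compute $t$ explicitly. Using the closed form for the inverse Vandermonde,
\[
(V^{-1})_{tj}=(-1)^{t+j}\,\frac{S_{r+1-t,\,j}(\gamma_1,\dots,\gamma_{r+1})}{\prod_{l\neq j}(\gamma_j-\gamma_l)},
\]
where $S_{k,j}$ is the elementary symmetric polynomial in the $\gamma$'s with $\gamma_j$ omitted, the product $t=-M(0)V^{-1}$ splits along the two summands of $M(0)_{ij}$. The term $-p_i\gamma_i^{j-1}$ contracts against $V^{-1}$ to $p_i\delta_{ij}$, giving the $p_i$ on the diagonal exactly as in the tCM computation, while the twisted correction $(j-1)\gamma_i^{j-2}$ produces the $p$-independent, geometric part of $t$. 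To evaluate the latter I would reuse the identity from the proof of Theorem~\ref{Th:XXX/tCM}: differentiating $\prod_{l\neq j}(u-\gamma_l)=\sum_k(-1)^{r+1-k}u^{k}S_{r+1-k,j}$ in $u$ and setting $u=\gamma_i$ collapses the sum over $t$ to $\gamma_i\sum_l\prod_{m\neq l,j}(\gamma_i-\gamma_m)$, which is a full sum of $r$ terms when $i=j$ (yielding the diagonal $-\sum_{j\neq i}1/(\gamma_i-\gamma_j)$ after dividing by $\prod_{l\neq i}(\gamma_i-\gamma_l)$) and a single surviving term when $i\neq j$ (yielding the off-diagonal ratio of products in \eqref{eq:rCMoffdiag}).

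Finally, I would match powers of $z$ in the oper relation \eqref{Wreld}. With $\det(z-t)=\sum_k H^{rCM}_k z^k$ as in \eqref{eq:tRSW2} and $\Lambda(z)=c\prod_{i}(z-a_i)=c\sum_k z^k e_k(a_1,\dots,a_{r+1})$, comparing top degree fixes $c=\det V$, and comparing the remaining coefficients turns the single Wronskian relation $\mathcal{D}_{r+1}=\Lambda$ into the system $H^{rCM}_k=e_k(a_1,\dots,a_{r+1})$ for $k=1,\dots,r$. Since both sides then present the same quotient of $\mathbb{C}(\gamma_i,p_i,a_i)$, the induced map is an algebra isomorphism. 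I expect the only genuine obstacle to be the bookkeeping in the second step: the twisted correction $(j-1)\gamma_i^{j-2}$ carries a power of $\gamma_i$ one lower than the analogous $(j-1)\xi_i^{j-1}\epsilon$ in the tCM case, and it is exactly this shift that strips off the overall $\gamma_i$ factor present in the tCM off-diagonal. Tracking the index shift in $S_{r+1-t,j}$ carefully is what guarantees that the entries land on \eqref{eq:rCMoffdiag} itself rather than on a rescaled similarity-equivalent representative; once the derivative-of-Vandermonde identity is applied, the rest is routine.
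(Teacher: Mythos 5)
Your proposal is correct and follows essentially the same route as the paper's own proof: reduce to the affine pencil $\mathcal{D}_{r+1}(z)=\det(zV+M(0))$, set $t=-M(0)V^{-1}$, invert the Vandermonde via the $S_{k,j}$ formula, and evaluate the twisted correction by the derivative-of-Vandermonde identity exactly as in the proof of Theorem~\ref{Th:XXX/tCM}. Your extra bookkeeping --- the explicit similarity argument for $V^{-1}M(0)$ versus $M(0)V^{-1}$, fixing $c=\det V$ by the top coefficient, and tracking the power shift $(j-1)\gamma_i^{j-2}$ that removes the overall $\gamma_i$ factor of the tCM case --- just fills in steps the paper leaves implicit under ``proceeding by analogy.''
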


\begin{proof}
The Lax matrix can be computed analogously to \eqref{eq:LaxCalc}
\begin{equation}
t= -M(0)_{1,\dots,r+1}\cdot V^{-1}_{1,\dots,r+1}\,,
\end{equation}
where $\left(M(0)_{1,\dots,r+1}\right)_{ij}=-p_i\gamma_i^{j-1}+(j-1)\gamma_i^{j-2}$, for $i,j=1,\dots, r$ and $\left(V_{1,\dots,r+1}\right)_{ij}=\gamma_i^{j-1}$ is the Vandermonde matrix. Carrying out the matrix multiplication we obtain 
\begin{equation}
t_{ij}=p_i\delta_{ij}-\displaystyle\sum_{t=1}^{r+1} \frac{(-1)^{t+j} (t-1)\gamma_i^{t-2}S_{r+1-t,j}(\gamma_1,\dots,\gamma_{r+1})}{\prod\limits_{l\neq j}^{r+1}(\gamma_j-\gamma_l)}\,.
\end{equation}
Proceeding by analogy with the proof of Theorem \ref{Th:XXX/tCM} we get the diagonal components 
\begin{equation}
t_{ii}=p_i - \sum_{j\neq i}\frac{1}{\gamma_i-\gamma_j}\,,
\end{equation}
and the off-diagonal components
\begin{equation}
t_{ij}=\frac{1}{\gamma_i-\gamma_j}\cdot\frac{\prod_{k\neq j}(\gamma_i-\gamma_k)}{\prod_{l\neq i}(\gamma_j-\gamma_l)}\,
\end{equation}
of the rational Calogero-Moser Lax matrix.
\end{proof}

Thus we again obtain the relation on the oper space, now of $r{\rm Op}^Z_{\Lambda}$ to many-body system: the 
 functions $H^{rCM}_k$ are known as $rCM$ Hamiltonians. Considering the phase space with symplectic form $\Omega=\sum^{r+1}_{i=1}\frac{dx^i}{x^i}\wedge \frac{dp_i}{p_i},$ one finds that $H^{rCM}_k(x^i, p_i)$ are  mutually commuting with respect to the Poisson bracket corresponding to $\Omega$.
Thus Theorem \ref{Th:rGaudin/rCM} implies the following.

\begin{Cor}(Rational differential Quantum/Classical duality)\label{rdqc}
We have the following isomorphisms:
\begin{eqnarray}
\frac{\mathbb{C}( \gamma_i, s_{k,l}, a_i)}{\rm Bethe}\cong {\rm Fun}\big(r{\rm Op}_Z^{\Lambda}\big)\cong \frac{\mathbb{C}( \gamma_i, p_i, a_i)}
{( H^{rCM}_k=e_k(a_1, \dots, a_{r+1}))_{k=1,\dots, r}},
\end{eqnarray}
where the latter space can be thought of as the space of functions on the intersection $\mathscr{L}_1\cap \mathscr{L}_2$ of two Lagrangian subvarieties:
\begin{eqnarray} 
\mathscr{L}_1=\{x^i=\xi_i\}_{i=1,\dots, r+1}, \quad \mathscr{L}_2=\{H^{tCM}_i=e_i(\{a_i\})\}_{i=1,\dots, r+1}.
\end{eqnarray}
\end{Cor}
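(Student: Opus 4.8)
The plan is to obtain the corollary by chaining together results already in hand and then supplying the symplectic-geometric reading of the rightmost quotient; no new oper-theoretic computation is needed. The two displayed algebra isomorphisms are immediate: the identification ${\rm Fun}(r{\rm Op}_Z^{\Lambda}) \cong \mathbb{C}(\gamma_i, s_{k,l}, a_i)/{\rm Bethe}$ is exactly the Proposition preceding Theorem~\ref{Th:rGaudin/rCM} (itself a consequence of the Bethe/$qq$ correspondence, Theorem~\ref{Th:rGBethecorr}), while ${\rm Fun}(r{\rm Op}_Z^{\Lambda}) \cong \mathbb{C}(\gamma_i,p_i,a_i)/(H^{rCM}_k = e_k(a))$ is Theorem~\ref{Th:rGaudin/rCM}. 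Transitivity of algebra isomorphisms then yields the full chain, in complete parallel to the trigonometric Corollary~\ref{tqqc}.

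The substantive content is to recognize the algebra $\mathbb{C}(\gamma_i,p_i,a_i)/(H^{rCM}_k = e_k(a))_{k}$ as the ring of functions on $\mathscr{L}_1 \cap \mathscr{L}_2$. Here I treat the eigenvalues of $Z$ (written $\gamma_i$ throughout this section, appearing as $\xi_i$ in the statement of $\mathscr{L}_1$) and the roots $a_i$ of $\Lambda$ as fixed parameters, leaving the $p_i$ as the dynamical momenta. The locus $\mathscr{L}_1=\{x^i=\gamma_i\}$ merely pins the coordinates to the twist eigenvalues, so its coordinate ring is $\mathbb{C}(\gamma_i,p_i)$ with the $p_i$ free. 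Imposing $H^{rCM}_k=e_k(a_1,\dots,a_{r+1})$ for all $k$ is, via the characteristic-polynomial packaging $\det(z-t)=\sum_k H^{rCM}_k z^k$ together with $\Lambda(z)=\prod_i(z-a_i)=\sum_k e_k z^k$, the single condition $\det(z-t)=\Lambda(z)$ fixing the entire spectrum of the Lax matrix; this is the defining locus $\mathscr{L}_2$, and the scheme-theoretic intersection $\mathscr{L}_1\cap\mathscr{L}_2$ has coordinate ring precisely the stated quotient.

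It then remains to check that $\mathscr{L}_1$ and $\mathscr{L}_2$ are genuinely Lagrangian for $\Omega=\sum_i \frac{dx^i}{x^i}\wedge\frac{dp_i}{p_i}$. For $\mathscr{L}_1$ this is immediate: on $\{x^i=\gamma_i\}$ each $dx^i$ vanishes, hence $\Omega|_{\mathscr{L}_1}=0$, and since $\dim\mathscr{L}_1=r+1$ is half the dimension of the phase space, $\mathscr{L}_1$ is Lagrangian; the logarithmic normalization of $\Omega$ is harmless, as $\frac{dx^i}{x^i}$ still vanishes when $x^i$ is constant. For $\mathscr{L}_2$ I would invoke the involutivity of the $H^{rCM}_k$ recorded just above, so that, matching all lower coefficients of the two monic degree-$(r+1)$ polynomials $\det(z-t)$ and $\Lambda(z)$ gives $r+1$ Poisson-commuting, generically independent constraints whose common level set is Lagrangian by the Liouville–Arnold mechanism.

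The main obstacle is not any single computation but the transversality/finiteness input: to identify the quotient algebra with functions on $\mathscr{L}_1\cap\mathscr{L}_2$ one needs the intersection to be a reduced, zero-dimensional scheme of the expected length, equivalently that fixing $x^i=\gamma_i$ together with the spectrum $\det(z-t)=\Lambda(z)$ determines the $p_i$ up to finitely many nondegenerate solutions. This is exactly where the nondegeneracy built into the canonical oper $r{\rm Op}_Z^{\Lambda}$ (regular semisimple $Z$, distinct roots of $\Lambda$, and the nondegeneracy of the underlying $qq$-system) enters, and I would verify that these hypotheses force $\mathscr{L}_1$ and $\mathscr{L}_2$ to meet transversally, so that the coordinate ring of the intersection is the finite-dimensional algebra matching both ends of the isomorphism chain.
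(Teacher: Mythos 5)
Your proposal is correct and follows essentially the same route as the paper, which derives Corollary~\ref{rdqc} directly by chaining the Bethe-ansatz description of ${\rm Fun}\big(r{\rm Op}_Z^{\Lambda}\big)$ (via Theorem~\ref{Th:rGBethecorr}) with Theorem~\ref{Th:rGaudin/rCM} and then reading the energy-level quotient as functions on $\mathscr{L}_1\cap\mathscr{L}_2$, using the mutual Poisson-commutativity of the $H^{rCM}_k$ with respect to $\Omega$. Your added verifications (that $\mathscr{L}_1$, $\mathscr{L}_2$ are Lagrangian, and the transversality/nondegeneracy caveat) are details the paper leaves implicit, and you correctly flag the paper's notational slips ($\xi_i$ versus $\gamma_i$, and $H^{tCM}_i$ versus $H^{rCM}_i$ in the statement of $\mathscr{L}_2$).
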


\subsection{Trigonometrically twisted $SL(r+1)$-opers, trigonometric Gaudin system and rRS Model}
Let us now characterize what happens in the case of trigonometric twist. For the sake of calculations, and for generalizations of most formulas, one can switch to the variable $\zeta$ on a cylinder. 
In these terms, local sections are parametrized as $s_i(z)=s_i(e^{\zeta})$, while the twisted 
Wronskians have the form:
\begin{equation} \label{Dtrig}
\mathcal{D}_i(s)(z)=e_1\wedge\dots\wedge e_{r+1-i}\wedge
  s(z)\wedge(\partial_\zeta +Z)s(e^{\zeta})\wedge \dots\wedge(\partial_z+Z)^{i-1} s(e^{\zeta}).
\end{equation}   
The related qq$^t$-system is as follows\footnote{The superscript $^t$ stands for `trigonometric' since the dependence of the variables now involved exponents.}:
\begin{equation}
\label{eq:QQGoperttr}
q^+_i(e^\zeta)\partial_{\zeta} q^-_i(e^\zeta)-q^-_i(e^\zeta)\partial_{\zeta} q^+_i(e^\zeta)+(\gamma_i -\gamma_{i+1})q^+_i(e^\zeta)q^-_i(e^\zeta)=\Lambda_i(e^\zeta)q^+_{j-1}(e^\zeta)q^+_{j+1}(e^\zeta)
\end{equation}
where $q^{\pm}(z)$ are polynomials related to the twisted Wronskians \eqref{Dtrig} in the same way as in Theorem  \ref{Wth}.

One obtains the following result, which is a direct analog of Theorem \ref{qqrop}. Using nondegeneracy conditions as in the rational case, we obtain:

\begin{Thm}
\label{qqtop}
There is a one-to-one correspondence between the set of nondegenerate $Z$-twisted  Miura $SL(r+1)$ opers and the set of nondegenerate polynomial solutions of the qq$^t$-system \eqref{eq:QQGoperttr}.
\end{Thm}

The explicit relation between the roots of $q^i_+(z)$ in this nondegenerate case can be summarized in the following statement.  

\begin{Thm}\label{Th:QQtGaud}
There is a bijection between the nondegenerate solutions of the trigonometric Gaudin Bethe Ansatz equations 
\begin{equation}
\label{eq:tGaudinBethe}
\frac{\gamma_{i+1}-\gamma_i}{s_{i,a}}-\sum_{(j,b)\neq(i,a)}\frac{a_{ij}}{s_{i,a}-s_{j,b}}+\sum_{c=1}^{M_i}\frac{1}{s_{i,a}-a_{i,c}}=0
\end{equation}
for $i=1,\dots, r+1$, where $a_{ji}$ is the Cartan matrix,
and the nondegenerate polynomial solutions of the qq$^t$-system \eqref{eq:QQGoperttr}.
where $\Lambda_j(z)=\prod_{c=1}^{M_j}(z-a_{j,c})$ and $q_j^+(z)=\prod_{c=1}^{N_j}(z-s_{j,c})$
\end{Thm}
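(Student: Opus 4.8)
The plan is to establish the bijection by reducing the trigonometric case to the rational case already proven in Theorem~\ref{Th:rGBethecorr}, exploiting the change of variables $z=e^{\zeta}$ that converts the trigonometric $qq$-system \eqref{eq:QQGopert} into a Gaudin-type Bethe system on the cylinder. First I would take a nondegenerate polynomial solution $\{q_i^+, q_i^-\}$ of the $qq$-system \eqref{eq:QQGopert}, write $q_j^+(z)=\prod_{c=1}^{N_j}(z-s_{j,c})$, and extract the Bethe equations by evaluating the bilinear relation at the roots $s_{i,a}$ of $q_i^+$. Since the operator appearing is $\partial_\zeta$ rather than $\partial_z$, and $\partial_\zeta = z\,\partial_z$ under $z=e^\zeta$, the logarithmic derivative $\partial_\zeta \log q_i^+(e^\zeta)$ evaluated at a root picks up an extra factor of $z$ in the denominator relative to the rational computation.

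The key computational step is to divide \eqref{eq:QQGopert} by $q_i^+(e^\zeta)\, q_i^-(e^\zeta)$ and evaluate at a zero $s_{i,a}$ of $q_i^+$. At such a point the term $q_i^+(e^\zeta)\partial_\zeta q_i^-(e^\zeta)$ vanishes, and the vanishing-order (nondegeneracy) assumption guarantees $q_i^-(s_{i,a})\neq 0$ and that the roots of neighboring $q^+_{i\pm 1}$ and of $\Lambda_i$ are distinct from $s_{i,a}$, so no spurious cancellations occur. Writing out $\partial_\zeta \log q_i^+(e^\zeta) = \sum_b \frac{s_{i,b}}{s_{i,a}-s_{i,b}}\cdot\frac{1}{?}$ — more precisely $\partial_\zeta \log q_i^+(e^\zeta)\big|_{s_{i,a}} = s_{i,a}\sum_{b\neq a}\frac{1}{s_{i,a}-s_{i,b}}$ — and doing the same for $\Lambda_i$ and for $q^+_{i\pm1}$, one collects all logarithmic-derivative contributions and divides through by $s_{i,a}$. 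The constant twist term $(\gamma_i-\gamma_{i+1})$, which carried no $z$-factor, becomes the $\frac{\gamma_{i+1}-\gamma_i}{s_{i,a}}$ term after this division, which is exactly the feature distinguishing \eqref{eq:tGaudinBethe} from the rational equation \eqref{eq:rGaudinBethe}. I would then verify that the remaining terms reproduce $-\sum_{(j,b)\neq(i,a)}\frac{a_{ij}}{s_{i,a}-s_{j,b}} + \sum_c \frac{1}{s_{i,a}-a_{i,c}}$, using that the Cartan matrix entries $a_{ij}$ record the $+2$ self-coupling from $q_i^+$ and the $-1$ couplings from $q^+_{i\pm1}$.

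For the reverse direction I would run the argument backwards in the style of Theorem~\ref{Th:rGBethecorr}: given a nondegenerate solution $\{s_{i,a}\}$ of \eqref{eq:tGaudinBethe}, define $q_i^+(z)=\prod_c (z-s_{i,c})$, solve for $q_i^-$ as the second solution of the first-order bilinear ODE \eqref{eq:QQGopert} with $q_i^+$ fixed, and check polynomiality of $q_i^-$ precisely at the points where the Bethe equations force the residues to vanish. The nondegeneracy conditions ($Z$ regular semisimple, distinctness of neighboring roots) are what make $q_i^-$ a genuine polynomial and make the correspondence invertible. The main obstacle I expect is bookkeeping the $z$-factors consistently throughout: because the derivation lives on the cylinder in $\zeta$ but the polynomials and the form of the answer are stated in $z=e^\zeta$, one must track exactly where the Jacobian factor $z$ appears and confirm it lands only on the twist term, producing the characteristic $\frac{\gamma_{i+1}-\gamma_i}{s_{i,a}}$ rather than contaminating the pairwise interaction terms. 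Once that is handled, the trigonometric case follows from the rational template with essentially cosmetic changes.
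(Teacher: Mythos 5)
Your overall route is the paper's route: pass to the cylinder coordinate $\zeta$ with $\partial_\zeta=z\,\partial_z$, extract the Bethe equations from logarithmic derivatives so that the Jacobian factor $z$ appears uniformly and, after division by $s_{i,a}$, lands only on the twist term $\frac{\gamma_{i+1}-\gamma_i}{s_{i,a}}$; and in the reverse direction, construct $q_i^-$ by integrating the first-order relation and use vanishing of residues to guarantee polynomiality. Your bookkeeping of the Cartan entries ($+2$ self-coupling, $-1$ neighbor couplings) and your final answer also match the paper's computation.

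However, your forward-direction ``key computational step'' fails as stated. Dividing \eqref{eq:QQGopert} by $q_i^+(e^\zeta)q_i^-(e^\zeta)$ and then ``evaluating at a zero $s_{i,a}$ of $q_i^+$'' is a singular operation: after division the term $\partial_\zeta\log q_i^+$ and the right-hand side both have simple poles at $s_{i,a}$, and matching their residues returns a tautology. Likewise, evaluating the \emph{undivided} bilinear relation at the root gives only
\begin{equation*}
-\,q_i^-(s_{i,a})\,\partial_\zeta q_i^+(e^\zeta)\big|_{e^\zeta=s_{i,a}}=\Lambda_i(s_{i,a})\,q_{i-1}^+(s_{i,a})\,q_{i+1}^+(s_{i,a})\,,
\end{equation*}
which merely determines the unknown value $q_i^-(s_{i,a})$ and imposes no constraint on the roots. (The symptom in your write-up is the formula $\partial_\zeta\log q_i^+(e^\zeta)\big|_{s_{i,a}}=s_{i,a}\sum_{b\neq a}\frac{1}{s_{i,a}-s_{i,b}}$, which silently discards the divergent $b=a$ term.) The Bethe equations live one order deeper: they are the condition that the \emph{double} pole of $e^{(\gamma_{i+1}-\gamma_i)\zeta}\,\Lambda_i\,q_{i-1}^+q_{i+1}^+\,(q_i^+)^{-2}$ at each root of $q_i^+$ has vanishing residue, equivalently that one may eliminate $q_i^-(s_{i,a})$ between the qq-relation and its first $\zeta$-derivative evaluated at the root. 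This is exactly the mechanism you correctly invoke in your reverse direction, and it is how the paper proves \emph{both} directions at once: it rewrites the left side of \eqref{eq:QQGopert} as $\partial_\zeta\bigl[e^{(\gamma_{i+1}-\gamma_i)\zeta}q_i^-/q_i^+\bigr]$ up to an explicit prefactor, applies the criterion that a double pole at $v$ is residue-free iff $\partial_u\log\bigl(f(u)(u-v)^2\bigr)\big|_{u=v}=0$, and then computes the logarithmic derivative to get $\gamma_{i+1}-\gamma_i+\sum_{c}\frac{s_{i,b}}{s_{i,b}-a_{i,c}}-s_{i,b}\sum_{(j,c)\neq(i,b)}\frac{a_{ij}}{s_{i,b}-s_{j,c}}=0$, which after division by $s_{i,b}$ is \eqref{eq:tGaudinBethe}. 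Replace your evaluation step by this residue criterion and the remainder of your argument, including the comparison with the rational case \eqref{eq:rGaudinBethe} of Theorem \ref{Th:rGBethecorr}, goes through as you describe.
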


\begin{proof}
Notice that
\begin{equation}
\label{eq:ResidueCalc}
\partial_\zeta\left[e^{(\gamma_{i+1}-\gamma_i) \zeta}\frac{q_i^-(e^\zeta)}{q_i^+(e^\zeta)}\right] = \frac{e^{(\gamma_{i+1}-\gamma_i) \zeta}}{q_i^+(e^\zeta)^2}\left((\gamma_{i+1}-\gamma_i) q_i^+(e^\zeta)q_i^-(e^\zeta)+q_i^+(e^\zeta)\partial_\zeta q_i^-(e^\zeta)-q_i^-(e^\zeta)\partial_\zeta q_i^+(e^\zeta)\right),
\end{equation}
where in the right-hand side above in the parentheses we can find the left-hand side of the qq$^t$-equations \eqref{eq:QQGoperttr}. Recall that a meromorphic function $f(u)$ with a double pole at $v$ does not have a residue iff $\partial_u \log(f(u)(u-v)^2)\vert_{u=v}=0$.
Let $\Lambda_j(e^\zeta)=\prod_{c=1}^{M_j}(e^\zeta-a_{j,c})$ and $q_j^+(e^\zeta)=\prod_{c=1}^{N_j}(e^\zeta-s_{j,c})$.
Therefore, using \eqref{eq:QQGoperttr}, we can write the following
\begin{align}
&\partial_\zeta \log\left[e^{(\gamma_{i+1}-\gamma_i) \zeta}q_i^+(e^\zeta)^{-2}\Lambda_i(e^\zeta)\prod_{j\neq i}\left[q^+_{j}(e^\zeta)\right]^{-a_{ji}}(e^\zeta-s_{i,a})^2\right]\Bigg\vert_{e^\zeta=s_{i,b}}\cr
&=\gamma_{i+1}-\gamma_i+\partial_\zeta \log\left[\Lambda_i(e^\zeta)\prod_{j}\left[q^+_{j}(e^\zeta)\right]^{-a_{ji}}(e^\zeta-s_{i,a})^2\right]\Bigg\vert_{e^\zeta=s_{i,b}}=0\,,
\end{align}
where we also used $a_{ii}=2$. The calculation of the logarithmic derivative yields
\begin{equation}
\gamma_{i+1}-\gamma_i+\sum_{c=1}^{M_i}\frac{s_{i,b}}{s_{i,b}-a_{i,c}}-s_{i,b}\cdot\sum_{(j,c)\neq(i,b)}\frac{a_{ij}}{s_{i,b}-s_{j,c}}=0\,,
\end{equation}
which is equivalent to \eqref{eq:tGaudinBethe}.
\end{proof}

Finally, let us define the spaces and the space of functions on them $tOp^Z_{\Lambda}$, ${\rm Fun}(tOp^Z_{\Lambda})$ in exactly the same way we did it with $rOp^Z_{\Lambda}$, ${\rm Fun}(rOp^Z_{\Lambda})$ except that rational $Z$-twist condition is replaced by the trigonometric one. This means that 
\begin{eqnarray}
{\rm Fun}\big(t{\rm Op}_Z^{\Lambda}\big)=\frac{\mathbb{C}(\gamma_i, p_i, a_i)}{\rm Wr},
\end{eqnarray}
where the relation ${\rm Wr}$ stands for:
\begin{equation}
 \underset{i,j}{\det} \left[e^\zeta(1+\gamma_i)^{j-1}-p_i \gamma_i^{j-1}\right] = \Lambda(z),
\label{eq:difMiuraDetFormdis}
\end{equation}

Proceeding analogously to the previous section we arrive at the following theorem

\begin{Thm}\label{Th:tGaudin/rRS}

There is an isomorphism of algebras 
\begin{equation}
{\rm Fun}\big(t{\rm Op}_Z^{\Lambda}\big)\cong \frac{\mathbb{C}( \gamma_i, p_i, a_i)}
{\{ H^{rRS}_k=e_k(a_1, \dots, a_{r+1})\}_{k=1,\dots, r}},
\end{equation}
where 
\begin{equation}\label{eq:rRSW}
\text{det}\Big(z - t \Big)=\sum_k H^{rRS}_k( \{\xi_i\}, \{p_i\})z^k
\end{equation}
where  $t$ is the rRS Lax matrix $\{t_{ij}\}_{i,j=1,\dots, r+1}$:
\begin{align}
\label{eq:LaxrRScalc}
t_{ij}&=\sum_k p_i \gamma_i^{k-1}\cdot (-1)^{kj}\frac{S_{r-k,j}(1+\gamma_1,\dots,1+\gamma_r)}{\prod\limits_{l\neq j}^{r+1} (\gamma_j-\gamma_l)}\notag\\
&=\frac{\prod\limits_{m \neq j}^{r+1} \left(\gamma_i - \gamma_m-1 \,  \right) }{\prod\limits_{l\neq j}^{r+1}(\gamma_j-\gamma_l)}   p_i\,,
\end{align}
\end{Thm}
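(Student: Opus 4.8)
The plan is to adapt the computation proving Theorem~\ref{Th:rGaudin/rCM} to the trigonometric twist, isolating the one structural change it produces. First I would use the canonical conditions, so that each $s_i$ is monic of degree one, $s_i(e^\zeta)=e^\zeta-p_i$, together with the twisted derivative $\nabla_i=\partial_\zeta+\gamma_i$. A one-line induction gives
$$\nabla_i^{\,j-1}s_i(e^\zeta)=e^\zeta(1+\gamma_i)^{j-1}-\gamma_i^{j-1}p_i,$$
the point being that $\partial_\zeta e^\zeta=e^\zeta$ makes $e^\zeta$ an eigenfunction, so each application of $\nabla_i$ multiplies the $e^\zeta$-part by $1+\gamma_i$ while the constant $-p_i$ only collects powers of $\gamma_i$; in particular no derivative-of-degree term $(j-1)\gamma_i^{j-2}$ arises, in contrast to the rational case \eqref{eq:difMiuraDetFormd}. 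Substituting into the top twisted Wronskian $\mathcal{D}_{r+1}$ of \eqref{Dtrig} reproduces the defining relation \eqref{eq:difMiuraDetFormdis}, so that ${\rm Fun}(t{\rm Op}_Z^\Lambda)$ is the quotient of $\mathbb{C}(\gamma_i,p_i,a_i)$ by $\det M(z)=\Lambda(z)$, where $M(z)_{ij}=z(1+\gamma_i)^{j-1}-p_i\gamma_i^{j-1}$ and $z=e^\zeta$.

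Next I would write $M(z)=z\widetilde V-PV$ with $P=\mathrm{diag}(p_1,\dots,p_{r+1})$, $V_{ij}=\gamma_i^{j-1}$ and $\widetilde V_{ij}=(1+\gamma_i)^{j-1}$. The single genuine departure from the rCM proof is that the coefficient of $z$ is now the shifted Vandermonde $\widetilde V$ rather than $V$ itself. What makes this harmless is that $\widetilde V$ is again a Vandermonde matrix, so $\det\widetilde V=\prod_{i<j}\big((1+\gamma_j)-(1+\gamma_i)\big)=\prod_{i<j}(\gamma_j-\gamma_i)=\det V$. Factoring $\widetilde V$ out on the right gives $M(z)=(zI-t)\widetilde V$, hence $\det M(z)=\det\widetilde V\cdot\det(z-t)$ with
$$t=-M(0)\widetilde V^{-1}=PV\widetilde V^{-1}.$$
The leading coefficient of $\det M(z)$ is therefore $\det\widetilde V=\det V$; absorbing this common constant into $\Lambda$, consistently with the normalization $\Lambda=c\prod_i(z-a_i)$, and comparing coefficients of $z^k$ in $\det M(z)=\Lambda(z)$ identifies $H_k^{rRS}$ with $e_k(a_1,\dots,a_{r+1})$, which is the asserted isomorphism of algebras.

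It remains to bring $t_{ij}=\sum_t p_i\gamma_i^{t-1}(\widetilde V^{-1})_{tj}$ to closed form. Inserting the inverse-Vandermonde formula in the shifted variables,
$$(\widetilde V^{-1})_{tj}=(-1)^{t+j}\frac{S_{r+1-t,j}(1+\gamma_1,\dots,1+\gamma_{r+1})}{\prod_{l\neq j}(\gamma_j-\gamma_l)},$$
reproduces the first line of \eqref{eq:LaxrRScalc}. To collapse the sum I would use the product expansion $\prod_{m\neq j}\big(u-(1+\gamma_m)\big)=\sum_k(-1)^{r-k}u^k\,S_{r-k,j}(1+\gamma)$ and specialize $u=\gamma_i$. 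This is the analog of the symmetric-function identity in the proof of Theorem~\ref{Th:XXX/tCM}, but here it is applied directly rather than after differentiating in $u$, precisely because the eigenfunction property of $e^\zeta$ removed the extra $\gamma_i^{j-2}$ term. The specialization turns the inner sum into $\prod_{m\neq j}(\gamma_i-\gamma_m-1)$ up to an overall sign, giving the closed form
$$t_{ij}=\frac{\prod_{m\neq j}(\gamma_i-\gamma_m-1)}{\prod_{l\neq j}(\gamma_j-\gamma_l)}\,p_i$$
of the second line of \eqref{eq:LaxrRScalc}, namely the rational Ruijsenaars--Schneider Lax matrix.

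I expect the main obstacle to be bookkeeping rather than conceptual: confining the shift $\gamma_i\mapsto1+\gamma_i$ to the $z$-coefficient columns and checking that the signs coming from $\widetilde V^{-1}$ recombine into the single product $\prod_{m\neq j}(\gamma_i-\gamma_m-1)$ with exactly the denominator $\prod_{l\neq j}(\gamma_j-\gamma_l)$. The only place the trigonometric case differs structurally from the rational one is this mismatch between the $z$-coefficient Vandermonde $\widetilde V$ and the defining Vandermonde $V$; once $\det\widetilde V=\det V$ is recorded and the product identity is specialized at $u=\gamma_i$, all remaining manipulations are specializations of the identities already used for the tCM and rCM Lax matrices.
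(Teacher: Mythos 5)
Your proposal is correct and follows essentially the same route as the paper's proof, which likewise reduces the canonical condition to the determinant relation \eqref{eq:difMiuraDetFormdis}, sets $t=-M(-\infty)\cdot V^{-1}$ with the shifted Vandermonde $(V)_{ij}=(1+\gamma_i)^{j-1}$, and collapses the sum exactly as in Theorem \ref{Th:XXz/tRS} via the inverse-Vandermonde and the specialized product identity. Your write-up is in fact more detailed than the paper's terse argument: the explicit computation $\nabla_i^{\,j-1}s_i(e^\zeta)=e^\zeta(1+\gamma_i)^{j-1}-\gamma_i^{j-1}p_i$ (explaining why no $(j-1)\gamma_i^{j-2}$ term appears, unlike \eqref{eq:difMiuraDetFormd}) and the observation $\det\widetilde V=\det V$ justifying the normalization are steps the paper leaves implicit.
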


\begin{proof}
The proof goes along the lines of the proof of Theorem \ref{Th:XXz/tRS} where for the Vandermonde matrix we need to use
$\left(V_{1,\dots,r}\right)_{ij}=(1+\gamma_i)^{j-1}$. The resulting rRS Lax matrix reads
\begin{equation}
t_{ij}=-M_{1,\dots,r}(-\infty)\cdot V_{1,\dots,r}^{-1}\,,
\end{equation}
where $\left(M_{1,\dots,r}\right)_{ij} = e^\zeta(1+\gamma_i)^{j-1}-p_i \gamma_i^{j-1}$ and we take $\left(V_{1,\dots,r+1}\right)_{ij}=(1+\gamma_i)^{j-1}$ for the Vandermonde matrix and $\beta_{r+1} = \det V_{1,\dots,r+1}$.
Performing matrix multiplication as in the proof of Theorem \ref{Th:XXz/tRS} we obtain the following
\begin{align}
\label{eq:LaxrRScalc2}
t_{ij}&=\sum_k p_i \gamma_i^{k-1}\cdot (-1)^{kj}\frac{S_{r-k,j}(1+\gamma_1,\dots,1+\gamma_r)}{\prod\limits_{l\neq j}^{r+1} (\gamma_j-\gamma_l)}\notag\\
&=\frac{\prod\limits_{m \neq j}^{r+1} \left(\gamma_i - \gamma_m-1 \,  \right) }{\prod\limits_{l\neq j}^{r+1}(\gamma_j-\gamma_l)}   p_i\,,
\end{align}
which is the Lax matrix for the rational Ruijsenaars-Schneider model.
\end{proof}

Thus we arrive to the following statement. 

\begin{Cor}(Rational differential Quantum/Classical duality)\label{rdqc2}
We have the following isomorphisms:
\begin{eqnarray}
\frac{\mathbb{C}( \gamma_i, s_{k,l}, a_i)}{\rm Bethe}\cong {\rm Fun}\big(t{\rm Op}_Z^{\Lambda}\big)\cong \frac{\mathbb{C}( \gamma_i, p_i, a_i)}
{( H^{rRS}_k=e_k(a_1, \dots, a_{r+1}))_{k=1,\dots, r}},
\end{eqnarray}
where in the first term {\rm Bethe} stands for the trigonometric Gaudin Bethe equations from \eqref{eq:tGaudinBethe} specified for $tOp^Z_{\Lambda}$ and 
the latter space can be thought of as the space of functions on the intersection $\mathscr{L}_1\cap \mathscr{L}_2$ of two Lagrangian subvarieties:
\begin{eqnarray} 
\mathscr{L}_1=\{x^i=\xi_i\}_{i=1,\dots, r+1}, \quad \mathscr{L}_2=\{H^{rRS}_i=e_i(\{a_i\})\}_{i=1,\dots, r+1}.
\end{eqnarray}
\end{Cor}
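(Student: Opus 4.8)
The plan is to assemble the two displayed isomorphisms from results already in hand, exactly as was done for Corollaries \ref{tqqc} and \ref{rteo} in the difference case. The rightmost isomorphism
\[
{\rm Fun}\big(t{\rm Op}_Z^{\Lambda}\big)\cong \frac{\mathbb{C}( \gamma_i, p_i, a_i)}{( H^{rRS}_k=e_k(a_1, \dots, a_{r+1}))_{k=1,\dots, r}}
\]
is precisely the content of Theorem \ref{Th:tGaudin/rRS}: there the defining relation ${\rm Wr}$ of \eqref{eq:difMiuraDetFormdis} is rewritten, after inverting the Vandermonde matrix $(V_{1,\dots,r+1})_{ij}=(1+\gamma_i)^{j-1}$, as the characteristic-polynomial identity $\det(z-t)=\Lambda(z)$ with $t$ the rRS Lax matrix. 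Since $\Lambda(z)=\sum_k z^k e_k(a_1,\dots,a_{r+1})$, matching coefficients of $z^k$ turns ${\rm Wr}$ into the family $H^{rRS}_k=e_k(a_1,\dots,a_{r+1})$, and this identification of generators gives the algebra isomorphism with no further work.

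For the leftmost isomorphism I would chain the two bijections already established. Theorem \ref{qqtop} identifies nondegenerate trigonometrically $Z$-twisted Miura $SL(r+1)$-opers with nondegenerate polynomial solutions of the qq-system \eqref{eq:QQGopert}, while Theorem \ref{Th:QQtGaud} identifies those solutions with solutions of the trigonometric Gaudin Bethe equations \eqref{eq:tGaudinBethe} via $q^+_i(z)=\prod_c(z-s_{i,c})$. Restricting both to the canonical locus of Definition \ref{canqop2}, with the rational twist replaced by the trigonometric one, the sections $s_i(z)=z-p_i$ are forced to be linear, so the whole oper datum is encoded by $(\gamma_i,p_i,a_i)$ subject to ${\rm Wr}$, while on the spin-chain side the same locus is cut out by the Bethe relations among the roots $s_{k,l}$. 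The composite bijection then upgrades to ${\rm Fun}(t{\rm Op}_Z^{\Lambda})\cong \mathbb{C}(\gamma_i,s_{k,l},a_i)/{\rm Bethe}$, with the $s_{k,l}$ expressed as symmetric functions of the oper parameters through the minors $\mathcal{D}_k$.

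Finally, to justify the Lagrangian description of the right-hand algebra, I would observe that $\mathscr{L}_1=\{x^i=\xi_i\}$ is a coordinate subvariety cut out by half of the variables, hence isotropic of middle dimension for $\Omega=\sum_i \frac{dx^i}{x^i}\wedge\frac{dp_i}{p_i}$, while $\mathscr{L}_2=\{H^{rRS}_i=e_i(\{a_i\})\}$ is a joint level set of the rRS Hamiltonians, which Poisson-commute with respect to $\Omega$; by the Liouville-Arnold mechanism such a level set is again Lagrangian. Both are middle-dimensional in the $2(r+1)$-dimensional phase space, so their scheme-theoretic intersection is generically finite, and, after substituting $x^i=\xi_i$, its coordinate ring is exactly $\mathbb{C}(\gamma_i,p_i,a_i)/(H^{rRS}_k=e_k)$.

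The main obstacle is the bookkeeping of nondegeneracy hypotheses across the chain: I must check that the canonical conditions of Definition \ref{canqop2} in the trigonometric twist are simultaneously compatible with the nondegeneracy required in Theorems \ref{qqtop} and \ref{Th:QQtGaud} and with distinctness of the $\gamma_i$ (regularity of $t$), so that all three correspondences live on the same locus and the composite is a genuine algebra isomorphism rather than a mere set bijection. The second delicate point, settled by the explicit computation in Theorem \ref{Th:tGaudin/rRS}, is that the trigonometric twist modifies the Vandermonde data to $(1+\gamma_i)^{j-1}$, so that ${\rm Wr}$ reproduces the rRS rather than the rCM Lax matrix; verifying this shift is exactly what distinguishes the present case from the rational one of Corollary \ref{rdqc}.
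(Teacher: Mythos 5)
Your proposal is correct and follows essentially the same route as the paper: the corollary there is stated as an immediate consequence (``Thus we arrive to the following statement'') of combining Theorem~\ref{Th:tGaudin/rRS} for the rRS side with Theorems~\ref{qqtop} and~\ref{Th:QQtGaud} for the Bethe side, exactly as you assemble it, with the Lagrangian-intersection reading justified as in Corollaries~\ref{tqqc} and~\ref{rteo} by the mutual Poisson-commutativity of the Hamiltonians. Your extra care about matching nondegeneracy loci across the chain and about the shifted Vandermonde data $(1+\gamma_i)^{j-1}$ is sound bookkeeping that the paper leaves implicit, but it does not change the argument.
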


\section{The Calogero-Moser Space and its Dualities}\label{Sec:CMSpace}

\subsection{The tRS Model}
Consider the subset of $GL(r+1;\mathbb{C})\times GL(r+1;\mathbb{C})\times\mathbb{C}^{r+1}\times \mathbb{C}^{r+1}$ defined by the relation
\begin{equation}
\label{eq:flatnesscond}
q M T - TM = u \otimes v^T\,,
\end{equation}
which is subject to the group action
\begin{equation}
(M,T,u,v)\mapsto (g M g^{-1},g T g^{-1},g u, v g^{-1}),\qquad g\in GL(r+1;\mathbb{C})\,.
\end{equation}
The quotient of the subset by this action yields the Calogero-Moser space $\mathcal{M}$ \cite{Oblomkov:aa}.

Solutions of \eqref{eq:flatnesscond} for $T$ (or $M$) in the basis where $M$ (or $T$) is diagonal lead to tRS Lax matrices. 
Here we discuss limiting cases of tRS into rRS and tCM.

\vskip.1in

The Lax matrix for the tRS matrix can be constructed as follows. 
\begin{Lem}[\cite{Koroteev:2023aa}]
Let $M$ and $T$ satisfy \eqref{eq:flatnesscond}. In the basis where $M$ is a diagonal matrix with eigenvalues $\xi_1,\dots, \xi_{r+1}$ the components of matrix $T$ are given by the following expression:
\begin{equation} \label{eq:lax1}
T_{ij} = \frac{u_i v_j}{q \xi_i - \xi_j}\,.
\end{equation}
\end{Lem}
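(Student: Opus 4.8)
The plan is to read off the matrix equation \eqref{eq:flatnesscond} entry by entry in the eigenbasis of $M$. The statement already grants us the freedom to work in the basis where $M=\diag(\xi_1,\dots,\xi_{r+1})$, so I would fix that basis from the outset and treat $u$, $v$, $T$ as their components in it. The key observation is that multiplication by a diagonal matrix acts diagonally on rows and columns, which turns the commutator-like left-hand side into a pure rescaling of each entry of $T$.

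Concretely, I would compute that for a diagonal $M$ one has $(MT)_{ij}=\xi_i T_{ij}$ and $(TM)_{ij}=\xi_j T_{ij}$, so the $(i,j)$ entry of the left-hand side of \eqref{eq:flatnesscond} is
\begin{equation}
(qMT-TM)_{ij}=(q\xi_i-\xi_j)\,T_{ij}.
\end{equation}
The right-hand side is the rank-one matrix $u\otimes v^{T}$, whose $(i,j)$ entry is simply $u_i v_j$. Equating the two entrywise gives $(q\xi_i-\xi_j)T_{ij}=u_i v_j$, and solving for $T_{ij}$ yields the claimed formula \eqref{eq:lax1}.

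The only genuine point to address is the well-definedness of the division, i.e.\ that $q\xi_i-\xi_j\neq 0$ for every pair $(i,j)$ for which we wish to isolate $T_{ij}$. For the off-diagonal entries this is a mild genericity condition on the $\xi_i$ relative to $q$, while for the diagonal entries it amounts to $q\neq 1$ (together with $\xi_i\neq 0$). I expect this nondegeneracy to be the sole obstacle, and it is the standard hypothesis under which the Calogero-Moser space $\mathcal{M}$ is described; I would note that under these conditions the formula determines $T$ uniquely from $M$, $u$, and $v$, which is exactly what identifies $T$ with the tRS Lax matrix after the appropriate gauge fixing of $u$ and $v$. Since the computation is a one-line entrywise comparison, I would not belabor it beyond recording the nondegeneracy assumption explicitly.
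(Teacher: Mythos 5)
Your proof is correct and is exactly the standard argument: the paper itself states this lemma without proof (importing it from the cited reference), and the intended justification is precisely your entrywise computation $(qMT-TM)_{ij}=(q\xi_i-\xi_j)T_{ij}=u_iv_j$ in the eigenbasis of $M$. Your explicit remark on the nondegeneracy condition $q\xi_i\neq\xi_j$ (genericity off the diagonal, $q\neq 1$ and $\xi_i\neq 0$ on it) is the right hypothesis and is indeed the one implicit in the description of the Calogero--Moser space.
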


One can define the {\it tRS momenta} $p_i,\,i=1,\dots,r+1$ using the diagonal components as follows:
\begin{equation}\label{eq:lax2}
p_i = - u_i v_i \frac{\prod\limits_{k \neq i} (\xi_i - \xi_k)}{\prod\limits_{k}\left(\xi_i -\xi_k q\right)}\,.
\end{equation}

Using the above formula we can represent the components of matrix $T$ \eqref{eq:lax1} by properly scaling vectors $u$ and $v$:
\begin{equation}\label{eq:LaxFullFormula}
T_{ji}= \frac{\xi_j(1 - q)}{\xi_j - \xi_i q}\prod\limits_{k \neq j} \frac{\xi_j-\xi_k q}{\xi_j - \xi_k}\, p_j 
=\frac{\prod\limits_{k \neq i}(\xi_j-\xi_k q)}{\prod\limits_{k \neq j}(\xi_j - \xi_k)}\, p_j \,.
\end{equation}
Matrix $T$ \eqref{eq:LaxFullFormula} is known as the Lax matrix of the tRS model \cite{MR1329481} (see also \cite{Gorsky:1993dq,Fock:1999ae}).

The coefficients of the characteristic polynomial of the Lax matrix are the tRS Hamiltonians $H_k$
\begin{equation}
 \text{det}\left(z -  T(\xi_i, p_i,q) \right) = \sum_{k=0}^{r+1} (-1)^lH_k(\xi_i, p_i,q) ^{n-k}\,,
\label{eq:tRSLaxDecomp}
\end{equation}

The corresponding tRS integrals of motion can be obtained by equating the above characteristic polynomials to $\prod_i (z-a_i)$ 
\begin{equation}
\sum_{\substack{\mathcal{I}\subset\{1,\dots,L\} \\ |\mathcal{I}|=k}}\prod_{\substack{i\in\mathcal{I} \\ j\notin\mathcal{I}}}\frac{q\,\xi_i - \xi_j }{\xi_i-\xi_j}\prod\limits_{m\in\mathcal{I}}p_m = e_k (a_i)\,,
\end{equation}
where $e_k$ is the $k$th elementary symmetric function.

\subsubsection{3d mirror Symmetry}
The defining relation \eqref{eq:flatnesscond} is invariant under the following symmetry
\begin{equation}
\label{eq:TMqmirror}
q\mapsto q^{-1}\,,\qquad M\mapsto T\,,\qquad T\mapsto M\,.
\end{equation}
It was shown in \cite{Gaiotto:2013bwa,Koroteev:2023aa} that this symmetry is related to the 3d mirror symmetry of the dual quiver varieties. Indeed, under the quantum/classical duality the eigenvalues of $M$ are related to the maximal torus parameters $a_i$ while the eigenvalues of $T$ yield the K\"ahler parameters $z_i$ for the underlying quiver variety. The 3d mirror symmetry interchanges the two sets and in addition, inverts the parameter $q$ which describes dilations of the cotangent directions.

\vskip.1in

Let us perform the 3d mirror map \eqref{eq:TMqmirror} upon which the twist and singularity parameters $\{a_i\}$ and $\{\xi_i\}$ are interchanged as well as $q$ is replaced with $q^{-1}$. Since the relation \eqref{eq:flatnesscond} is invariant under this map we have the following

\begin{Prop}\label{tqqcm2}
The following algebras are isomorphic
\begin{eqnarray}
\frac{\mathbb{C}(q, \xi_i, p_i, a_i)}{( H^{tRS}_k=e_k(a_1, \dots, a_{r+1}))_{k=1,\dots, r+1}}\cong 
\frac{\mathbb{C}(q^{-1}, a_i, p'_i, \xi_i)}{( H'^{tRS}_k=e_k(\xi_1, \dots, \xi_{r+1}))_{k=1,\dots, r+1}}
\end{eqnarray}
Here the dual Hamiltoninas $H'^{tRS}_k$ are obtained from the 3d mirror dual Lax matrix $M$ and are given by
\begin{equation}
H'^{tRS}_k = \sum_{\substack{\mathcal{I}\subset\{1,\dots,L\} \\ |\mathcal{I}|=k}}\prod_{\substack{i\in\mathcal{I} \\ j\notin\mathcal{I}}}\frac{q^{-1}\,a_i - a_j }{a_i-a_j}\prod\limits_{m\in\mathcal{I}}p'_m\,.
\end{equation}
\end{Prop}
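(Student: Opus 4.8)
The plan is to exhibit both algebras as the coordinate ring of the Calogero--Moser space $\mathcal{M}$ written in two different frames, and to identify the $3$d mirror map \eqref{eq:TMqmirror} with the change of frame that relates them. The starting point is the observation, already noted above, that the defining relation \eqref{eq:flatnesscond} is invariant under \eqref{eq:TMqmirror}: substituting $q\mapsto q^{-1}$, $M\mapsto T$, $T\mapsto M$ produces $q^{-1}TM-MT$, and multiplying through by $-q$ returns $qMT-TM$ with the rank-one term rescaled by $-q$. Since that scalar can be absorbed into the vectors $u,v$, which are acted on freely in the quotient defining $\mathcal{M}$, the map descends to a well-defined involution of $\mathcal{M}$ interchanging the spectra of $M$ and $T$.

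First I would recall, via Theorem \ref{Th:XXz/tRS} and Corollary \ref{tqqc}, that the left-hand algebra is $\mathrm{Fun}(\mathcal{M})$ computed in the frame where $M=\mathrm{diag}(\xi_1,\dots,\xi_{r+1})$: the Lemma of \cite{Koroteev:2021a} fixes the off-diagonal entries of $T$ by the Lax formula \eqref{eq:LaxFullFormula} in terms of $\xi_i,p_i,q$, while the constraints $H^{tRS}_k=e_k(a)$ assert precisely that $T$ has characteristic polynomial $\prod_i(z-a_i)$, i.e. that the eigenvalues of $T$ are $\{a_i\}$. The right-hand algebra is then the same construction carried out in the $T$-diagonal frame with $q$ replaced by $q^{-1}$: here $T=\mathrm{diag}(a_1,\dots,a_{r+1})$, the matrix $M$ becomes the Lax matrix with momenta $p'_i$ extracted from its diagonal via the analog of \eqref{eq:lax2}, and the constraints $H'^{tRS}_k=e_k(\xi)$ fix the eigenvalues of $M$ to be $\{\xi_i\}$. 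Because the involution above identifies these two frames on $\mathcal{M}$, pullback of functions supplies the claimed isomorphism of algebras, sending $\xi_i\leftrightarrow a_i$ on the level of spectra and $p_i$ to $p'_i$.

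To obtain the explicit formula for $H'^{tRS}_k$ I would apply the Lax expression \eqref{eq:lax1}, \eqref{eq:LaxFullFormula} in the dual frame. Since that formula is symmetric under $M\leftrightarrow T$, diagonalizing $T$ and using $q^{-1}$ yields the entries of $M$ obtained from $T_{ji}$ by interchanging $\xi\leftrightarrow a$ and replacing $q$ by $q^{-1}$; forming $\det(z-M)=\prod_i(z-\xi_i)$ and reading off coefficients gives the stated sum over subsets $\mathcal{I}$ with $q^{-1}a_i-a_j$ in the numerator. I expect the main obstacle to be the frame-change bookkeeping: one must check that the dual momenta $p'_i$ defined from the diagonal of $M$ are genuine independent coordinates and that the scalar rescaling of $u\otimes v^T$ absorbed above does not disturb the symplectic pairing, so that the two presentations agree as algebras rather than merely as point sets.
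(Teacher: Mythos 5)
Your proof is correct and follows essentially the same route as the paper: the paper likewise derives the isomorphism directly from the invariance of the moment-map relation \eqref{eq:flatnesscond} under the mirror map \eqref{eq:TMqmirror}, reading the left algebra off from the $M$-diagonal frame (Theorem \ref{Th:XXz/tRS}) and the right one from the $T$-diagonal frame with $q\mapsto q^{-1}$, where the dual Lax matrix $M$ yields $H'^{tRS}_k$. Your explicit check that the substitution only rescales the rank-one term by $-q$, which is then absorbed into $u$ and $v$, is a detail the paper leaves implicit, and is a welcome addition.
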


Let us define dual twist parameters $Z'=\text{diag}(a_1,\dots,a_{r+1})$ as well as singularities in the dual frame given by polynomial $\Lambda'(z)=\prod_{i=1}^{r+1}(z-\xi_i)$. These data describe the space of dual q-opers ${\rm Fun}\big(q{\rm Op}_{Z'}^{\Lambda'}\big)$.

Using Corollary \ref{tqqc} as well as the above proposition we obtain the following statement

\begin{Thm}
The space of canonical trigonometrically-twisted q-opers on $\mathbb{P}^1$ is self-dual, namely:
\begin{eqnarray}
{\rm Fun}\big(q{\rm Op}_{Z'}^{\Lambda'}\big)\cong {\rm Fun}\big(q{\rm Op}_{Z}^{\Lambda})\,.
\end{eqnarray}
\end{Thm}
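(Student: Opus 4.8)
The plan is to transport the asserted isomorphism through the two coordinate descriptions of a canonical trigonometrically twisted $q$-oper that are already in hand, namely the quantum/classical duality of Corollary~\ref{tqqc} and the $3d$ mirror symmetry of Proposition~\ref{tqqcm2}. First I would record that the conditions of Definition~\ref{canqop} are symmetric under the exchange $\{\xi_i\}\leftrightarrow\{a_i\}$: regular semisimplicity of $Z=\diag(\xi_1,\dots,\xi_{r+1})$ is distinctness of the $\xi_i$, which in the dual frame is exactly distinctness of the roots of $\Lambda'$, while distinctness of the roots of $\Lambda$ becomes regular semisimplicity of $Z'=\diag(a_1,\dots,a_{r+1})$; the degree condition is preserved as well. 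Hence $(Z',\Lambda')$ again defines a canonical oper space, and Corollary~\ref{tqqc} applies to it verbatim.

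The proof then chains three isomorphisms. Applying Corollary~\ref{tqqc} in the original frame gives
$${\rm Fun}\big(q{\rm Op}_{Z}^{\Lambda}\big)\cong \frac{\mathbb{C}(q, \xi_i, p_i, a_i)}{\big( H^{tRS}_k=e_k(a_1, \dots, a_{r+1})\big)_{k=1,\dots, r+1}}.$$
Proposition~\ref{tqqcm2}, which is the $M\leftrightarrow T$, $q\mapsto q^{-1}$ symmetry \eqref{eq:TMqmirror} of the defining relation \eqref{eq:flatnesscond}, identifies this with the dual tRS algebra
$$\frac{\mathbb{C}(q, a_i, p'_i, \xi_i)}{\big( H'^{tRS}_k=e_k(\xi_1, \dots, \xi_{r+1})\big)_{k=1,\dots, r+1}},$$
where $H'^{tRS}_k$ is built from the mirror Lax matrix $M$ whose spectral data involve the $a_i$. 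Finally I would apply Corollary~\ref{tqqc} in the dual frame, to the data $(Z',\Lambda')$, so as to recognize this last algebra as ${\rm Fun}\big(q{\rm Op}_{Z'}^{\Lambda'}\big)$. Composing the three arrows yields the claim.

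The step that must be checked with care is the closing of the loop: that the mirror-dual Hamiltonians $H'^{tRS}_k$ of Proposition~\ref{tqqcm2} coincide with the tRS Hamiltonians that Corollary~\ref{tqqc} attaches to $q{\rm Op}_{Z'}^{\Lambda'}$. Concretely, one must verify that the Lax matrix of Theorem~\ref{Th:XXz/tRS} evaluated on the dual data $(Z',\Lambda')$ agrees, up to conjugation, with the mirror matrix $M$ of \eqref{eq:flatnesscond}; this reduces to matching the $q^{-1}a_i-a_j$ factors appearing on both sides, which are precisely those dictated by the convention of Theorem~\ref{Th:XXz/tRS}, so that the two systems of energy relations generate the same ideal. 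The apparent clash between the $q^{-1}$ produced by the mirror map and the undeformed $q$ entering the definition of $q{\rm Op}_{Z'}^{\Lambda'}$ is harmless, since $\mathbb{C}(q)=\mathbb{C}(q^{-1})$ and all maps above are isomorphisms of $\mathbb{C}$-algebras; the two Lax-matrix conventions differ exactly by this inversion. Once this identification of the Hamiltonian relations is in place, the three arrows are genuine algebra isomorphisms and the self-duality follows.
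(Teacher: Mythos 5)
Your proposal is correct and follows essentially the same route as the paper: the paper obtains the theorem precisely by chaining Corollary~\ref{tqqc} in the original frame, the 3d mirror symmetry of Proposition~\ref{tqqcm2}, and Corollary~\ref{tqqc} again for the dual data $Z'=\mathrm{diag}(a_1,\dots,a_{r+1})$, $\Lambda'(z)=\prod_{i=1}^{r+1}(z-\xi_i)$. Your additional checks --- that the canonicality conditions of Definition~\ref{canqop} are symmetric under $\{\xi_i\}\leftrightarrow\{a_i\}$ and that the $q\mapsto q^{-1}$ inversion is harmless since $\mathbb{C}(q)=\mathbb{C}(q^{-1})$ --- are details the paper leaves implicit, and they are handled correctly.
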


\vskip.1in
The limiting cases of the rRS, tCM models, as we shall see below, can also be related to each other via the 3d mirror symmetry, since they can be obtained as dual limits of \eqref{eq:flatnesscond}. Finally, the rRS model, which is at the bottom of the hierarchy, is self-dual under 3d mirror symmetry as it can be obtained in two different limits from the rRS and from the tCM models.

\subsection{The tCM Model}
Let $M$ be the matrix exponent $M = \exp (R m)$ and $q = e^{R\epsilon }$. Then we send $R\rightarrow 0$ to get from \eqref{eq:flatnesscond}
\begin{equation}
\label{eq:flatnesscondtCM}
[m, T]+\epsilon T = \tilde u \otimes \tilde v^T\,,
\end{equation}
where we scaled $u, v$ by $R$.

In the basis where $T=\text{diag}(\zeta_1,\dots,\zeta_{r+1})$ matrix $m$ becomes the Lax matrix of the tCM model. This works as follows. The diagonal components of the Lax matrix read $\epsilon \zeta_i = \tilde u_i  \tilde v_i$. Then the off-diagonal components of $m$ are
\begin{equation}
\label{eq:tCMLax1}
m_{ij}=\frac{\tilde u_i  \tilde v_j}{\zeta_i-\zeta_j}\,,
\end{equation}
while the diagonal components of $m$ will contain the tCM momenta
\begin{equation}
\label{eq:tCMLax2}
m_{ii}= p_i +\epsilon\sum_{j\neq i}\frac{\zeta_i+\zeta_j}{\zeta_i-\zeta_j}\,.
\end{equation}
The choice of vector $\tilde v_j$ in \eqref{eq:tCMLax1} is given by $\tilde v_j=\prod_{k\neq j}(\zeta_j-\zeta_k)^{-1}$
$$
m_{ij}=\frac{\epsilon\zeta_i}{\zeta_i-\zeta_j}\frac{\tilde v_j}{\tilde v_i}=\frac{\epsilon\zeta_i}{\zeta_i-\zeta_j}\frac{\prod\limits_{k\neq i}(\zeta_i-\zeta_k)}{\prod\limits_{k\neq j}(\zeta_j-\zeta_k)}\,,
$$
which matches \eqref{eq:tCMLaxm}.

\subsection{The rRS Model}
Now consider \eqref{eq:flatnesscondtCM} and matrix $m$ to be diagonal with eigenvalues $\gamma_1,\dots, \gamma_{N}$. In this case matrix $T$ becomes the Lax matrix of the rational Ruijsenaars-Schneider model. Its matrix elements are
\begin{equation}
T_{ij}=\frac{u_i v_j}{\gamma_i-\gamma_j+\epsilon}
\end{equation}
and the rRS momenta can be defined via the diagonal components as a limit from \eqref{eq:lax2}
\begin{equation}
u_i v_i = -p_i\frac{\prod\limits_{k=1}^{r+1}(\gamma_i-\gamma_k-\epsilon)}{\prod\limits_{k\neq i}^{r+1}(\gamma_i-\gamma_k)}\,.
\label{eq:uvdiagonal}
\end{equation}
In other words, the Lax matrix, after setting $v_j=1$ for all $j$, becomes
\begin{equation}
\label{eq:rRSLaxApp}
T_{ij} = \frac{\prod\limits_{k\neq j}^{r+1}(\gamma_i-\gamma_k-\epsilon)}{\prod\limits_{k\neq i}^{r+1}(\gamma_i-\gamma_k)}p_i\,,
\end{equation}
which coinsides with \eqref{eq:LaxrRScalc} up to rescaling by $\epsilon$.

\vskip.1in

There is an equivalence between the rRS and the tCM models since their Lax matrices can be found from the diagonalization of \eqref{eq:flatnesscondtCM} $T$ and $m$ respectively. 
\begin{Prop}
The following algebras are isomorphic:
\begin{eqnarray}
\frac{\mathbb{C}(\gamma_i, p_i, a_i)}{( H^{rRS}_k=e_k(a_1, \dots, a_{r+1}))_{k=1,\dots, r+1}}\cong 
\frac{\mathbb{C}(a_i, p'_i, \gamma_i)}{( H^{tCM}_k=e_k(\xi_1, \dots, \xi_{r+1}))_{k=1,\dots, r+1}}.
\end{eqnarray}
\end{Prop}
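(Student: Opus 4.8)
The plan is to carry out, in the additive $R\to0$ degeneration, the same argument that proves the tRS self-duality in Proposition \ref{tqqcm2}. The essential input is already recorded in the text: both Lax matrices are read off from the single rank-one relation \eqref{eq:flatnesscondtCM}, namely $[m,T]+\epsilon T=\tilde u\otimes\tilde v^T$. Diagonalizing $m$ yields the rRS Lax matrix $T$ of \eqref{eq:rRSLaxApp}, while diagonalizing $T$ yields the tCM Lax matrix $m$ of \eqref{eq:tCMLaxm}. First I would observe that the coefficients of $\det(z-T)$ and of $\det(z-m)$ are invariant under the simultaneous conjugation $(m,T)\mapsto(gmg^{-1},gTg^{-1})$, and therefore descend to regular functions on the Calogero-Moser phase space $\mathcal{M}$ associated with \eqref{eq:flatnesscondtCM} (the quotient of its solutions by $GL(r+1)$-conjugation); by construction these are exactly the rRS Hamiltonians $H^{rRS}_k$ (from $T$) and the tCM Hamiltonians $H^{tCM}_k$ (from $m$).

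Next I would present both quotient rings as the ring of regular functions on the regular-semisimple locus $\mathcal{M}^{\mathrm{rs}}$, written in two different slices. In the slice where $m=\mathrm{diag}(\gamma_1,\dots,\gamma_{r+1})$ is diagonalized, the coordinates $(\gamma_i,p_i)$ -- with $p_i$ the rRS momenta fixed by \eqref{eq:uvdiagonal} -- coordinatize $\mathcal{M}^{\mathrm{rs}}$, and the $a_i$ become the spectral functions $\det(z-T)=\prod_i(z-a_i)$; imposing this identity is precisely the defining relation of the left-hand algebra $\mathbb{C}(\gamma_i,p_i,a_i)/(H^{rRS}_k=e_k(a))$. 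In the dual slice where $T=\mathrm{diag}(a_1,\dots,a_{r+1})$ is diagonalized -- the eigenvalues of $T$ being conjugation-invariant, they are the very same $a_i$ -- the coordinates $(a_i,p'_i)$ coordinatize $\mathcal{M}^{\mathrm{rs}}$, and now the eigenvalues of $m$, which are the original $\gamma_i$, furnish $\det(z-m)=\prod_i(z-\gamma_i)$, i.e. the defining relation of the right-hand algebra. Hence both quotients are two coordinate presentations of $\mathcal{O}(\mathcal{M}^{\mathrm{rs}})$, and the change of $GL(r+1)$-frame that diagonalizes $T$ instead of $m$ is the required isomorphism; it is the $R\to0$ shadow of the involution $M\leftrightarrow T$ of \eqref{eq:flatnesscond} used for Proposition \ref{tqqcm2}.

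The main obstacle I expect is bookkeeping rather than conceptual. One must verify that on the regular-semisimple locus both $m$ and $T$ are diagonalizable with a frame unique up to the residual torus and Weyl symmetry, so that $(\gamma_i,p_i)$ and $(a_i,p'_i)$ really are (Darboux) coordinate systems and the two diagonalization slices glue to the same affine variety $\mathcal{M}^{\mathrm{rs}}$. A second point to watch is normalization: the matrix extracted from \eqref{eq:flatnesscondtCM} agrees with \eqref{eq:LaxrRScalc} only after an overall rescaling by $\epsilon$, so I would absorb this into the identification; since $\epsilon$ is invertible in the ground field $\mathbb{C}(\epsilon,\gamma_i,\dots)$ this rescaling is an automorphism and does not affect the isomorphism class. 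Finally I would record the parameter dictionary explicitly -- the eigenvalues of $T$ serve simultaneously as the rRS singularities $a_i$ and the tCM twist, while the eigenvalues of $m$ serve as the rRS twist $\gamma_i$ and the tCM singularities -- confirming that the two integrable systems are genuinely exchanged, exactly as the 3d mirror map exchanges $M$ and $T$ in the multiplicative case.
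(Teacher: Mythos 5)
Your proposal is correct and takes essentially the same route as the paper: the paper's (much terser) justification is precisely that both Lax matrices arise from the single rank-one relation \eqref{eq:flatnesscondtCM}, with diagonalization of $m$ producing the rRS Lax matrix \eqref{eq:rRSLaxApp} and diagonalization of $T$ producing the tCM Lax matrix \eqref{eq:tCMLaxm}, so the two quotient algebras are two diagonalization slices of the same Calogero--Moser space, in direct analogy with the $M\leftrightarrow T$ involution behind Proposition \ref{tqqcm2}. Your extra bookkeeping --- the regular-semisimple locus, the invertibility of the $\epsilon$-rescaling, and the observation that the right-hand relation should read $e_k(\gamma_1,\dots,\gamma_{r+1})$ (fixing a typo in the statement, which writes $\xi_i$) --- only fills in details the paper leaves implicit.
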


Notice that parameter $\epsilon$ can be removed completely from \eqref{eq:flatnesscondtCM} by scaling $m$ and the right-hand side. That is why it did not appear in the algebras in the above Proposition. We nevertheless keep $\epsilon$ manifest in this section in order to be able to take the limit to the rational Calogero-Moser system below.

Using Corollaries \ref{rteo} and \ref{rdqc2} we can prove the following
\begin{Thm}
There is a one-to-one correspondence between the space of canonical trigonometrically $Z$-twisted opers on $\mathbb{P}^1$ and the space of canonical rationally $Z'$-twisted $\epsilon$-opers on $\mathbb{P}^1$
\begin{equation}
{\rm Fun}\big(t{\rm Op}_Z^{\Lambda}\big)(\mathbb{C}(\epsilon))\cong {\rm Fun}\big(\epsilon{\rm Op}_{Z'}^{\Lambda'}\big)\,,
\end{equation}
where the singularities in the above spaces are defined using polynomials $\Lambda(z)=\prod_{i=1}^{r+1}(z-\gamma_i)$ and $\Lambda'(z)=\prod_{i=1}^{r+1}(z-a_i)$ respectively.
\end{Thm}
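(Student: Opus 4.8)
The plan is to realize the asserted isomorphism as a composition of three isomorphisms already at our disposal, using the common origin of the rRS and tCM Lax data in the Calogero--Moser relation \eqref{eq:flatnesscondtCM} as the bridge. First I would apply Corollary \ref{rdqc2}, which identifies ${\rm Fun}\big(t{\rm Op}_Z^{\Lambda}\big)$ with the rRS coordinate algebra $\mathbb{C}(\gamma_i,p_i,a_i)/(H^{rRS}_k=e_k(a_1,\dots,a_{r+1}))$, the eigenvalues of the trigonometric twist playing the role of the rRS positions and the roots of $\Lambda$ being the eigenvalues of the rRS Hamiltonians. Symmetrically, Corollary \ref{rteo} identifies ${\rm Fun}\big(\epsilon{\rm Op}_{Z'}^{\Lambda'}\big)$ with the tCM coordinate algebra, the eigenvalues of $Z'$ being the tCM positions and the roots of $\Lambda'$ being the eigenvalues of the tCM Hamiltonians. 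In this way the theorem is reduced to a single isomorphism between the rRS and tCM coordinate algebras under which the twist data of one side is exchanged with the singularity data of the other.

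The central step is to produce that rRS/tCM isomorphism from the single matrix relation $[m,T]+\epsilon T=\tilde u\otimes\tilde v^{T}$ of \eqref{eq:flatnesscondtCM}. As shown in Section \ref{Sec:CMSpace}, diagonalizing $m$ turns $T$ into the rRS Lax matrix with positions the eigenvalues of $m$, whereas diagonalizing $T$ turns $m$ into the tCM Lax matrix with positions the eigenvalues of $T$. Since both Lax matrices solve the same relation, the coefficients of their characteristic polynomials encode the same spectral datum with the two spectra interchanged; this is exactly the bispectral equivalence of the two coordinate algebras recorded in the Proposition just above. Tracking the exchange, the roots of $\Lambda$ on the trigonometric side are carried to the eigenvalues of the dual twist $Z'$, while the eigenvalues of $Z$ are carried to the roots of $\Lambda'$, which reproduces precisely the polynomial data $\Lambda(z)=\prod_i(z-\gamma_i)$ and $\Lambda'(z)=\prod_i(z-a_i)$ stated in the theorem. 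I would make this identification explicit on generators and verify that it carries the defining relations $H^{rRS}_k=e_k(\cdot)$ to $H^{tCM}_k=e_k(\cdot)$, which is immediate once the spectra are matched.

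The one genuine subtlety, and the step I expect to demand the most care, is the bookkeeping of the deformation parameter $\epsilon$. The rRS algebra arising from $t{\rm Op}_Z^{\Lambda}$ carries no intrinsic $\epsilon$, whereas the tCM algebra arising from $\epsilon{\rm Op}_{Z'}^{\Lambda'}$ does; this is precisely why the statement base-changes the former to $\mathbb{C}(\epsilon)$. Here I would invoke the observation at the end of the rRS subsection that $\epsilon$ can be scaled out of \eqref{eq:flatnesscondtCM} entirely by rescaling $m$ and the rank-one term, so that the Calogero--Moser rRS Lax matrix \eqref{eq:rRSLaxApp} agrees with the oper-derived matrix \eqref{eq:LaxrRScalc} up to the overall factor $\epsilon$. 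Over the field $\mathbb{C}(\epsilon)$ this rescaling is invertible, so the three isomorphisms compose without obstruction. I would conclude by composing them, ${\rm Fun}\big(t{\rm Op}_Z^{\Lambda}\big)(\mathbb{C}(\epsilon))\cong\text{rRS}\cong\text{tCM}\cong{\rm Fun}\big(\epsilon{\rm Op}_{Z'}^{\Lambda'}\big)$, and reading off the resulting correspondence of the two spaces of canonical opers together with the stated identification of singularity polynomials.
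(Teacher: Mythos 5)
Your proposal is correct and follows essentially the same route as the paper, which proves the theorem precisely by composing Corollary \ref{rdqc2} (trigonometric oper $\leftrightarrow$ rRS), the Proposition identifying the rRS and tCM algebras via the two diagonalizations of the Calogero--Moser relation \eqref{eq:flatnesscondtCM}, and Corollary \ref{rteo} ($\epsilon$-oper $\leftrightarrow$ tCM). Your handling of the $\epsilon$-bookkeeping via the scaling remark and the base change to $\mathbb{C}(\epsilon)$ matches the paper's intent and is, if anything, spelled out more explicitly than in the text.
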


\subsection{The rCM Model}
Let $T$ be the matrix exponent $T = \exp \epsilon t$. Then we send $\epsilon\to 0$ to get from \eqref{eq:flatnesscondtCM}
in the first order in $\epsilon$
\begin{equation}
\label{eq:flatnesscondrCM}
[m, t] + 1 =  u' \otimes  v'\,,
\end{equation}
for some vectors $u'$ and $v'$. In the basis where $m=\text{diag}(m_1,\dots,m_{r+1})$ the diagonal elements of the left hand side vanish, thus $u'_iv_i'=1$ whereas the off-diagonal elements of $t$ read
\begin{equation}
\label{eq:LaxrCMoffdiag1}
t_{ij}=\frac{u'_iv_j'}{m_i-m_j}\,.
\end{equation}
We can set 
\begin{equation}
u_i'=\prod_{k\neq {j}}(\alpha_i-\alpha_k)\,,\qquad v_j'=\prod_{l\neq i}(\alpha_j-\alpha_l)^{-1}
\end{equation}
in order to match the expression with \eqref{eq:rCMoffdiag}. The diagonal components can be obtained by taking the limit from \eqref{eq:uvdiagonal}
\begin{equation}
\label{eq:LaxrCMdiag1}
t_{ii}=P_i-\sum_{j\neq i}\frac{1}{m_i-m_j}\,,
\end{equation}
where $\mathrm{p}_i=e^{\epsilon P_i}$, we send $\epsilon\to 0$ and capture the term linear in $\epsilon$. We thus have obtained the Lax matrix $t$ of the rCM model.

\vskip.1in

Since the relation \eqref{eq:flatnesscondrCM} is invariant under the 3d mirror map 
\begin{equation}
m\mapsto t\,,\qquad t\mapsto -m\,
\end{equation}
we can formulate

\begin{Prop}\label{rdqc3}
The following algebras are isomorphic:
\begin{eqnarray}
\frac{\mathbb{C}( \gamma_i, p_i, a_i)}
{( H^{rCM}_k=e_k(a_1, \dots, a_{r+1}))_{k=1,\dots, r}}\cong
\frac{\mathbb{C}( a_i, p'_i, \gamma_i)}
{( H'^{rCM}_k=e_k(\gamma_1, \dots, \gamma_{r+1}))_{k=1,\dots, r}}\,.
\end{eqnarray}
where the dual rCM Hamiltonians in terms of variable $a_i$ are used.
\end{Prop}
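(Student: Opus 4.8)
The plan is to run the same argument that establishes the tRS self-duality in Proposition \ref{tqqcm2}, now at the rational Calogero--Moser level. The first step is to present both algebras in the statement as coordinate rings of the intersection of two Lagrangian subvarieties inside the Calogero--Moser space $\mathcal{M}$ cut out by \eqref{eq:flatnesscondrCM}. For the left-hand algebra one works in the frame where $m=\text{diag}(\gamma_1,\dots,\gamma_{r+1})$; then $t$ is the rCM Lax matrix of Theorem \ref{Th:rGaudin/rCM}, and imposing $H^{rCM}_k=e_k(a)$ fixes the spectrum of $t$ to be $\{a_i\}$. For the right-hand algebra one works in the dual frame, diagonalizing the other matrix instead, so that the eigenvalues $\{a_i\}$ become the twist data while the characteristic polynomial of the remaining matrix is constrained to $\prod_i(z-\gamma_i)$.

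The core of the proof is the observation, already recorded just before the statement, that \eqref{eq:flatnesscondrCM} is invariant under $(m,t)\mapsto(t,-m)$: the term $1$ is fixed and $[m,t]\mapsto[t,-m]=[m,t]$, so the left-hand side is unchanged while the right-hand side remains a rank-one matrix $u''\otimes v''$. I would then check that this involution commutes with the residual $GL(r+1)$-action by simultaneous conjugation, so that it descends to an automorphism of $\mathcal{M}$. By construction this automorphism exchanges the two frames of the previous paragraph, hence interchanges the spectral conditions and effects the swap $\{\gamma_i\}\leftrightarrow\{a_i\}$ appearing in the proposition. This already yields the desired isomorphism of the two coordinate rings at the level of the underlying varieties.

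The step I expect to be the main obstacle is the bookkeeping: verifying that the conjugate momenta transform correctly, $p_i\mapsto p'_i$, and that after the swap the diagonal entries of the new Lax matrix reproduce the dual Hamiltonians $H'^{rCM}_k$ expressed in the variables $a_i$, including the sign coming from $t\mapsto -m$. For this I would use the rational degeneration of the parametrization \eqref{eq:uvdiagonal}, writing $\mathrm{p}_i=e^{\epsilon P_i}$ and extracting the term linear in $\epsilon$ as in \eqref{eq:LaxrCMdiag1}, to show that the momenta conjugate to $\gamma_i$ in the first frame become the momenta conjugate to $a_i$ in the dual frame. Equating $\det(z-t)$ to $\prod_i(z-a_i)$ in one frame and the dual characteristic polynomial to $\prod_i(z-\gamma_i)$ in the other then matches the two generating sets of relations, and the involution identifies the quotient algebras, completing the proof.
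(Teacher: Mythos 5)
Your proposal follows essentially the same route as the paper: the paper derives Proposition~\ref{rdqc3} directly from the invariance of \eqref{eq:flatnesscondrCM} under the 3d mirror map $m\mapsto t$, $t\mapsto -m$, with the two algebras arising from diagonalizing $m$ versus $t$ and the momenta identified via the limit \eqref{eq:uvdiagonal}--\eqref{eq:LaxrCMdiag1}, exactly as you outline. Your version merely spells out details (descent of the involution to the quotient, the sign bookkeeping) that the paper leaves implicit.
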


Let us define dual twist parameters $Z'=\text{diag}(a_1,\dots,a_{r+1})$ as well as singularities in the dual frame given by polynomial $\Lambda'(z)=\prod_{i=1}^{r+1}(z-\xi_i)$. Then Corollary \ref{rdqc} and the above proposition yield the following statement.

\begin{Thm}
The space of canonical rationally $Z$-twisted opers ${\rm Fun}\big(r{\rm Op}_Z^{\Lambda}\big)$ is self-dual, namely:
\begin{eqnarray}
{\rm Fun}\big(r{\rm Op}_Z^{\Lambda}\big)\cong {\rm Fun}\big(r{\rm Op}_{Z'}^{\Lambda'}\big)\,.
\end{eqnarray}
\end{Thm}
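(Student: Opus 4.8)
The plan is to obtain this self-duality as the composition of two isomorphisms already at our disposal: the rational differential quantum/classical duality of Corollary \ref{rdqc}, which identifies the oper algebra with the energy-relation algebra of the rational Calogero--Moser system, and the $3$d mirror self-duality of that rCM algebra recorded in Proposition \ref{rdqc3}. Concretely, I would pass from the oper side to the integrable-system side, perform the mirror swap $\gamma_i\leftrightarrow a_i$ there, and then return to the oper side in the dual frame. This mirrors exactly the argument used earlier for the self-duality of the canonical trigonometrically twisted $q$-opers, where Corollary \ref{tqqc} and Proposition \ref{tqqcm2} played the corresponding roles.

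First I would invoke Corollary \ref{rdqc} to write
\begin{equation}
{\rm Fun}\big(r{\rm Op}_Z^{\Lambda}\big)\cong \frac{\mathbb{C}(\gamma_i,p_i,a_i)}{\big(H^{rCM}_k=e_k(a_1,\dots,a_{r+1})\big)_{k=1,\dots,r}},
\end{equation}
so that the twist eigenvalues $\gamma_i$ play the role of the rCM coordinates while the roots $a_i$ of $\Lambda$ fix the values of the Hamiltonians. Next I would apply Proposition \ref{rdqc3}, whose isomorphism interchanges $\{\gamma_i\}$ and $\{a_i\}$ and replaces $p_i$ by the dual momenta $p_i'$, turning the right-hand side into
\begin{equation}
\frac{\mathbb{C}(a_i,p_i',\gamma_i)}{\big(H'^{rCM}_k=e_k(\gamma_1,\dots,\gamma_{r+1})\big)_{k=1,\dots,r}}.
\end{equation}
Finally, setting $Z'=\diag(a_1,\dots,a_{r+1})$ and $\Lambda'(z)=\prod_{i=1}^{r+1}(z-\gamma_i)$ and reading Corollary \ref{rdqc} in this dual frame identifies the last algebra with ${\rm Fun}\big(r{\rm Op}_{Z'}^{\Lambda'}\big)$; composing the three isomorphisms yields the claim.

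The step that requires genuine care, and which I regard as the only real obstacle, is checking that the dual Hamiltonians $H'^{rCM}_k$ produced by Proposition \ref{rdqc3} really are the rCM Hamiltonians attached to the dual oper data $(Z',\Lambda')$, rather than merely formally similar expressions. This is where the structure of the rCM Lax matrix \eqref{eq:rCMoffdiag} enters: the defining relation \eqref{eq:flatnesscondrCM}, namely $[m,t]+1=u'\otimes v'$, is invariant under the mirror map $m\mapsto t$, $t\mapsto -m$, and diagonalizing $m$ versus diagonalizing $t$ yields two Lax matrices of the \emph{same} functional form with $\gamma_i$ and $a_i$ exchanged. Consequently the characteristic-polynomial relation $\det(z-t)=\Lambda(z)=\prod_i(z-a_i)$ becomes, after the swap, $\det(z-m)=\Lambda'(z)=\prod_i(z-\gamma_i)$, so the mirror-dual energy relations coincide with the defining Wronskian relation \eqref{Wreld} for the dual canonical opers. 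Once this matching is verified the composition of isomorphisms is routine, and I would close the argument by recording that the three identifications are compatible on nose with the coordinate dictionary $x^i=\gamma_i$ underlying $\mathscr{L}_1$ in Corollary \ref{rdqc}.
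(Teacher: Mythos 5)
Your proposal follows essentially the same route as the paper, which derives the theorem precisely by composing Corollary \ref{rdqc} with the 3d mirror self-duality of Proposition \ref{rdqc3} and then reading Corollary \ref{rdqc} in the dual frame $(Z',\Lambda')$. In fact you supply more detail than the paper does (the paper simply states that the corollary and proposition ``yield the statement''), and your careful check that the dual Hamiltonians match the rCM data via the invariance of \eqref{eq:flatnesscondrCM} under $m\mapsto t$, $t\mapsto -m$ is exactly the verification implicit in the paper's argument; you also correctly write $\Lambda'(z)=\prod_i(z-\gamma_i)$ where the paper has an apparent typo ($\xi_i$ in place of $\gamma_i$).
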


\bibliography{cpn1}
\end{document}